\DeclareMathAlphabet{\mathpzc}{OT1}{pzc}{m}{it}
\newcommand{\ja}[2][e]{%
    \IfStrEqCase{#1}{%
        {e}{\textcolor{Fuchsia}{\textbf{*Jason: #2*}}}%
        {c}{}%
    }
}
\newcommand{\gf}[2][e]{%
    \IfStrEqCase{#1}{%
        {e}{\textcolor{RoyalBlue}{\textbf{*Gary: #2*}}}%
        {c}{}%
    }
}
\newcommand{\cgt}[2][e]{%
    \IfStrEqCase{#1}{%
        {e}{\textcolor{Red}{\textbf{*Cecilia: #2*}}}%
        {c}{}%
    }
}
\newcommand{\sv}[2][e]{%
    \IfStrEqCase{#1}{%
        {e}{\textcolor{PineGreen}{\textbf{*Sandro: #2*}}}%
        {c}{}%
    }
}
\newcommand{\jaadd}[2][e]{%
    \IfStrEqCase{#1}{%
        {e}{\textcolor{Fuchsia}{\textbf{#2}}}%
        {r}{#2}%
    }
}
\newcommand{\gfadd}[2][e]{%
    \IfStrEqCase{#1}{%
        {e}{\textcolor{RoyalBlue}{\textbf{#2}}}%
        {r}{#2}%
    }
}
\newcommand{\cgtadd}[2][e]{%
    \IfStrEqCase{#1}{%
        {e}{\textcolor{Red}{\textbf{#2}}}%
        {r}{#2}%
    }
}
\newcommand{\svadd}[2][e]{%
    \IfStrEqCase{#1}{%
        {e}{\textcolor{PineGreen}{\textbf{#2}}}%
        {r}{#2}%
    }
}
\theoremstyle{plain}
\newtheorem{theorem}{Theorem}[section]
\newtheorem{lemma}[theorem]{Lemma}
\newtheorem{proposition}[theorem]{Proposition}
\theoremstyle{definition}
\newtheorem{definition}[theorem]{Definition}
\newtheorem{remark}[theorem]{Remark}
\newtheorem{example}[theorem]{Example}
\newtheorem{thmx}{Theorem}
\newtheorem*{theorem*}{Theorem}
\newcommand{\vp}{\varphi}
\renewcommand{\epsilon}{\varepsilon}
\newcommand{\sub}{\ensuremath{ \subseteq}} 
\newcommand{\set}[1]{\ensuremath{ \left\{#1\right\} }} 
\newcommand{\norm}[1]{\ensuremath{ \|#1\| }} 
\newcommand{\vertiii}[1]{{\left\vert\kern-0.25ex\left\vert\kern-0.25ex\left\vert #1
		\right\vert\kern-0.25ex\right\vert\kern-0.25ex\right\vert}}
\newcommand{\trinorm}[1]{\ensuremath{\vertiii{#1}}} 
\newcommand{\absval}[1]{\ensuremath{ \left\lvert#1\right\rvert }} 
\newcommand{\spot}{\ensuremath{ \makebox[1ex]{\textbf{$\cdot$}} }}
\newcommand{\ol}[1]{\overline{#1}}
\newcommand{\bs}{\ensuremath{\backslash}}
\newcommand{\lt}{\ensuremath{\left}}
\newcommand{\rt}{\ensuremath{\right}}
\newcommand{\Om}{\ensuremath{\Omega}}
\newcommand{\Gm}{\ensuremath{\Gamma}}
\newcommand{\Sg}{\ensuremath{\Sigma}}
\newcommand{\Dl}{\ensuremath{\Delta}}
\def\L{\ensuremath{\mathcal L}}
\newcommand{\om}{\ensuremath{\omega}}
\newcommand{\lm}{\ensuremath{\lambda}}
\newcommand{\gm}{\ensuremath{\gamma}}
\newcommand{\al}{\ensuremath{\alpha}}
\newcommand{\bt}{\ensuremath{\beta}}
\newcommand{\sg}{\ensuremath{\sigma}}
\newcommand{\ep}{\ensuremath{\epsilon}}
\newcommand{\kp}{\ensuremath{\kappa}}
\newcommand{\zt}{\ensuremath{\zeta}}
\newcommand{\ta}{\ensuremath{\theta}}
\newcommand{\vta}{\ensuremath{\vartheta}}
\newcommand{\vkp}{\ensuremath{\varkappa}}
\newcommand{\vrho}{\ensuremath{\varrho}}
\DeclareSymbolFont{bbold}{U}{bbold}{m}{n}
\DeclareSymbolFontAlphabet{\mathbbold}{bbold}
\newcommand{\ind}{\ensuremath{\mathbbold{1}}}
\DeclareMathOperator*{\esssup}{ess\sup}
\DeclareMathOperator*{\essinf}{ess\inf}
\DeclareMathOperator{\supp}{supp}
\newcommand{\cA}{\ensuremath{\mathcal{A}}}
\newcommand{\cB}{\ensuremath{\mathcal{B}}}
\newcommand{\cF}{\ensuremath{\mathcal{F}}}
\newcommand{\cH}{\ensuremath{\mathcal{H}}}
\newcommand{\cJ}{\ensuremath{\mathcal{J}}}
\newcommand{\cL}{\ensuremath{\mathcal{L}}}
\newcommand{\cP}{\ensuremath{\mathcal{P}}}
\newcommand{\cZ}{\ensuremath{\mathcal{Z}}}
\newcommand{\sA}{\ensuremath{\mathscr{A}}}
\newcommand{\sB}{\ensuremath{\mathscr{B}}}
\newcommand{\sF}{\ensuremath{\mathscr{F}}}
\newcommand{\CC}{\ensuremath{\mathbb C}} 
\newcommand{\EE}{\ensuremath{\mathbb E}}
\newcommand{\NN}{\ensuremath{\mathbb N}}
\newcommand{\PP}{\ensuremath{\mathbb P}}
\newcommand{\QQ}{\ensuremath{\mathbb Q}}
\newcommand{\RR}{\ensuremath{\mathbb R}}
\def\var{\text{{\rm var}}}
\def\Var{\text{{\rm Var}}}
\def\BV{\text{{\rm BV}}}
\def\Leb{\text{{\rm Leb}}}
\def\lt{\left}
\def\rt{\right}
\providecommand{\phantomsection}{}
\newcommand{\mylabel}[2]{\raisebox{.7\normalbaselineskip}{\phantomsection}(#1)%
	\def\@currentlabel{#1}\textlabel{#2}}
\newcommand\xlabel[2][]{\phantomsection\def\@currentlabelname{#1}\label{#2}}
\newcommand{\bcomma}{,\allowbreak}
\NewDocumentCommand{\mathlist}{ O{,} m m }
 {
  \egreg_mathlist:nnn { #1 } { #2 } { #3 }
 }
\newcommand{\oio}{\ensuremath{\omega\in\Omega}}
\newcommand{\nin}{\ensuremath{n\in\NN}}
\numberwithin{equation}{section}
\title[]{Compound Poisson Statistics for 
dynamical systems via spectral perturbation }
\date{\today}
\author{Jason Atnip}
\address{School of Mathematics and Physics, The University of Queensland, St Lucia, QLD 4072, Australia}
\email{\href{j.atnip@uq.edu.au}{j.atnip@uq.edu.au} }
\author{Gary Froyland}
\address{School of Mathematics and Statistics, University of New South Wales, Sydney, NSW 2052, Australia}
\email{\href{g.froyland@unsw.edu.au}{g.froyland@unsw.edu.au} }
\author{Cecilia Gonz\'alez-Tokman}
\address{School of Mathematics and Physics, The University of Queensland, St Lucia, QLD 4072, Australia}
\email{\href{cecilia.gt@uq.edu.au}{cecilia.gt@uq.edu.au} }
\author{Sandro Vaienti}
\address{Aix Marseille Université, Université de Toulon, CNRS, CPT, 13009 Marseille, France}
\email{\href{vaienti@cpt.univ-mrs.fr}{vaienti@cpt.univ-mrs.fr} }
\begin{document}

	\maketitle
	
\begin{abstract}
We consider random transformations  $T_\omega^n:=T_{\sigma^{n-1}\omega}\circ\cdots\circ T_{\sigma\omega}\circ T_\omega,$ where each map $T_{\omega}$ acts on a complete metrizable space $M$.  The randomness comes from an invertible  ergodic driving map $\sigma:\Omega\to\Omega$ acting on a probability space $(\Omega,\cF,m).$  For a family of random target sets $H_{\omega, n}\subset M$ that shrink as $n\to\infty$, we consider quenched compound Poisson statistics of returns of random orbits to these random targets. 
We develop a spectral approach to such statistics: 
associated with the random map cocycle is a transfer operator cocycle $\mathcal{L}^{n}_{\omega,0}:=\mathcal{L}_{\sigma^{n-1}\omega,0}\circ\cdots\circ\mathcal{L}_{\sigma\omega,0}\circ\mathcal{L}_{\omega,0}$, where $\mathcal{L}_{\omega,0}$ is the transfer operator for the map $T_\omega$.
We construct a perturbed cocycle with generator $\mathcal{L}_{\omega,n,s}(\cdot):=\mathcal{L}_{\omega,0}(\cdot e^{is\mathbbold{1}_{H_{\omega,n}}})$ and an associated random variable $S_{\omega,n,k}(x):=\sum_{j=0}^{k-1}\mathbbold{1}_{H_{\sigma^j\omega,n}}(T_\omega^jx)$, which counts the number of visits to random targets in an orbit of length $k$.
Under suitable assumptions, we show that in the $n\to\infty$ limit, the random variables $S_{\omega,n,n}$  
converge in distribution to a compound Poisson distributed random variable. We provide several explicit examples for piecewise monotone interval maps in both the deterministic and random settings.

\end{abstract}

\tableofcontents

\section{Introduction}
In dynamical systems, return-time statistics describe the number of times that trajectories return to small neighbourhoods of the initial condition. In addition to their intrinsic interest, these statistics are heavily used in the study of extreme events. Indeed, the occurrence of such events can often be characterised as visits of trajectories of a dynamical system to a specific (small) region of the phase space, sometimes called a target set, or a hole.

This work investigates return-time statistics for various sufficiently chaotic dynamical systems, using compound Poisson random variables.
These random variables describe the sum of a Poisson-distributed amount of independent identically-distributed random variables. More precisely, a random variable $Z:M\to
\mathbb{N}$ is compound Poisson distributed if there exists a Poisson random variable $N$ and a sequence $X_1, X_2, \dots$ of non-negative i.i.d.\ random variables $X_k:M\to\mathbb{N}$ such that 
\begin{align}\label{def CP}
    Z = \sum_{k=1}^N X_k.
\end{align}
Compound Poisson random variables are useful to model the cumulative effect of random events which occur in a fixed amount of time, e.g. the total amount of rainfall in a year. 

A random dynamical system is defined through random compositions of maps $T_\omega:M\to M$, drawn from a collection $\{T_\omega\}_{\omega\in\Omega}$.
	An ergodic invertible measure-preserving map $\sigma:\Omega\to\Omega$ on a probability space $(\Omega,\cF,m)$ creates a map cocycle, or random dynamical system, driven by $\sigma$, namely $T_\omega^n:=T_{\sigma^{n-1}\omega}\circ\cdots\circ T_{\sigma\omega}\circ T_\omega$.
 This very general driving setup enables the study of random dynamics far beyond (and including)  deterministic maps (where $T_\om=T$ for every $\om\in\Omega$) and i.i.d. compositions of maps $T_\omega$ (where $\Omega$ is a set of bi-infinite sequences of symbols labelling the different maps, $\sigma$ is the left shift and $m$ is an infinite product measure $m=\nu^{\otimes \mathbb Z}$).

To investigate return-time statistics in the general setting of random dynamical systems, 
we consider a sequence of random target sets $H_{\omega, n}\subset M,$ which are decreasing in $n\in \mathbb N$ for each $\omega \in \Omega.$ 
For $\omega\in\Omega,$ $x\in M$, $n\ge 1$, and $k\le n$, we are interested in studying the following quantities
$$
S_{\om,n,k}(x):=\sum_{j=0}^{k-1}{\ind}_{H_{\sg^j\om,n}}(T_\om^jx),
$$
which count the number of times an  orbit of length $k$ hits the random sequence of targets on their associated $\omega$-fibers.

It is well known for deterministic dynamical systems, that the limit of $ S_{\om,n,k}$ for $n\rightarrow \infty,$ is degenerate unless one chooses $k$ {\em as a function} of the target set, which is known as a Kac scaling. In the context of random dynamical systems, there are different Kac-type scalings, see the discussion in Remark \ref{otsf}. We will use the scaling \eqref {qqss} introduced in our former work \cite{AFGTV-TFPF}, which means setting $k$ to be $n= \left\lfloor\frac{t_\omega+\xi_{\omega,n}}{\mu_{\omega,0}(H_{\omega, n)}}\right\rfloor$.
We therefore consider  the associated random variable $S_{\omega, n, n}:M\rightarrow \mathbb{N}$ on $M$ and  the discrete distributions
\begin{equation}\label{masdis}
\mu_{\omega,0}(\{x\in M: S_{\om,n,n}(x)=j\}),\qquad j\ge 0
\end{equation}
in the limit $n\rightarrow \infty,$
where $\{\mu_{\omega,0}\}_{\oio}$ is a family 
of probability measures on $M$
satisfying the equivariance condition $\mu_{\omega,0}\circ T_{\omega}^{-1}=\mu_{\sigma \omega,0}.$
These measures are a natural generalisation of invariant measures for a single deterministic map $T$ to the setting of random dynamical systems. 
Roughly speaking, our main result (Theorem~\ref{thm CF}) is that for $m$-a.e.\ $\omega$, in the $n\to\infty$ limit, the random variable  $S_{\omega,n,n}$  converges in distribution  to a random variable $Z$ that follows a compound Poisson distribution. 
\subsection{Background and notation}
In our previous paper \cite{AFGTV-TFPF} we developed a spectral approach for a quenched extreme value theory
that considers random dynamics on the unit interval with general ergodic invertible driving, and random observations. An extreme value law was derived using the first-order approximation of the leading Lyapunov multiplier  of a suitably perturbed transfer operator defined by the introduction of small random holes in a metric space $M.$ We were inspired by a result of Keller and Liverani \cite{keller_rare_2009} which, in the deterministic setting,  developed abstract conditions on the transfer operator $\cL$ and its perturbations $\cL_{\epsilon}$ (for each $\ep>0$)
to ensure good first-order behaviour with respect to the perturbation size. 

Our first task was to generalize the Keller-Liverani result when we have a sequential compositions of linear operators $\mathcal{L}_{\omega,0}^n:=\mathcal{L}_{\sigma^{n-1}\omega,0}\circ\cdots\circ \mathcal{L}_{\sigma\omega,0}\circ \mathcal{L}_{\omega,0}$, 
	where $\sigma:\Omega\to\Omega$ is an invertible, ergodic map on a configuration set $\Omega$.
We then consider a family of perturbed cocycles $\mathcal{L}_{\omega,\ep}^n:=\mathcal{L}_{\sigma^{n-1}\omega,\ep}\circ\cdots\circ \mathcal{L}_{\sigma\omega,\ep}\circ \mathcal{L}_{\omega,\ep}$, for each $\ep>0$, where the size of the perturbation $\mathcal{L}_{\omega,0}-\mathcal{L}_{\omega,\ep}$ is quantified by the value $\Delta_{\omega,\epsilon}=\nu_{\sg\om,0}\left((\cL_{\om,0}-\cL_{\om,\ep})(\phi_{\om,0})\right),$ where $\phi_{\om,0}$ and $\nu_{\om,0}$ (the conformal measure), are respectively the random eigenvector of $\cL_{\om,0}$
and of its dual with common eigenvalue $\lambda_{\omega,0}.$ 
	We obtained  an abstract quenched formula  for the Lyapunov multipliers $\lambda_{\omega,0}$ up to first order in the size of the perturbation $\Delta_{\omega,\epsilon}.$  	This map cocycle generates a transfer operator cocycle $\mathcal{L}_{\omega,0}^n,$ where $\mathcal{L}_{\omega,0}$ is the transfer operator for the map $T_\omega$.
	For each $\om\in\Om$ and each $\ep>0$, a random \jaadd[r]{target, referred to as a hole in \cite{AFGTV-TFPF}, $\~H_{\omega,\ep}\subset M$} is introduced; this allows us to define the perturbed transfer operator	$\mathcal{L}_{\omega,\epsilon}$  for the open map $T_\omega$ and \jaadd[r]{target $\~H_{\omega,\epsilon},$  namely $\mathcal{L}_{\omega,\epsilon}(f)=\mathcal{L}_\omega(\ind_{M\setminus \~H_{\omega,\epsilon}}f)$. }
 
 Suppose now $h_{\omega}:M\to\RR$ is a continuous function for each $\omega$ and write $\overline{z}_{\omega}$ as its essential supremum. For any $z_{\omega, N}<\overline{z}_{\omega}$ we can now define the set 
$$
\cH_{\om,z_{\omega, N}}:=\set{x\in M :h_\om(x)-z_{\omega, N}>0}, 
$$
which could be identified as a \jaadd[r]{target} in the space $M.$  The suffix $N$  for the point $z_{\omega, N}$ means that we will now consider $N$ such \jaadd[r]{target}s in order to study the {\em distribution of non-exceedances}:
 \begin{align}\label{D1}
&\mu_{\omega,0}\left(\lt\{x\in M: h_{\sigma^j\omega}(T^j_{\omega}(x))\le z_{\sigma^j\omega, N}, j=0, \dots, N-1\rt\}\right)	
\nonumber\\
&\qquad= 
\mu_{\om,0}\left(\set{x\in\cJ_{\om,0}:T_\om^j(x)\notin \cH_{\sg^j\om,z_{\sg^j\om,N}} \text{ for }j=0,\dots, N-1}\right)
	 \end{align}
 where $\mu_{\omega, 0}$ is the equivariant measure for the unperturbed system. Moreover we will require an asymptotic behavior for the \jaadd[r]{target}s of the type
 \begin{equation}\label{qqss}
     \mu_{\omega,0}(\cH_{\om,z_{\omega, N}})=(t_\omega+\xi_{\omega,N})/N,
     \end{equation}
     for a.e.\ $\omega$ and each $N\ge 1$, where $t_{\omega}$ is a positive random variable and $\xi_{\omega,N}$ goes to zero when $N\rightarrow \infty$; see Section \ref{sec: random setting} for more details. Following the spectral approach of \cite{keller_rare_2012}, we proved in \cite{AFGTV-TFPF} that the distribution (\ref{D1}) behaves asymptotically as $\sfrac{\lm_{\om,\ep_N}^N}{\lm_{\om,0}^N},$ where $\lm_{\om,\ep_N}$ is the Lyapunov multiplier of $\cL_{\omega, \epsilon_N.}$ Such a multiplier is obtained by the first order perturbation of $\lm_{\om,0}$ quoted above, and ultimately will produce the limit Gumbel's law
 $$
\lim_{N\to\infty} \mu_{\omega,0}\left(x\in M: h_{\sigma^j\omega}(T^j_{\omega}(x))\le z_{\sigma^j\omega, N}, j=0, \dots, N-1\right)
		=\exp\left(-\int_\Omega t_\omega\theta_{\omega,0}\ dm(\omega)\right),
 $$
where the {\em extremal index} $\theta_{\omega,0}$	 is given by the limit 
\jaadd[r]{$\theta_{\omega,0}=\lim_{N\to \infty}\frac{\lm_{\om,0}-\lm_{\om,\ep_N}}{\Dl_{\om,\ep_N}}.$}

There is another equivalent interpretation of Gumbel's law in terms of hitting times that will be useful to introduce the main topic of this paper.  
Let us now consider a general family of random \jaadd[r]{target}s $\{H_{\om, n}\}_{\om\in\Om},$ satisfying $H_{\om,n'}\sub H_{\om,n}$, $n'\geq n$, and $\lim_{n\to 0}\mu_{\om,0}(H_{\om,n})=0.$  The first (random) hitting time to a \jaadd[r]{target}, starting at initial condition $x$ and random configuration $\omega,$ is defined by:
$$
\tau_{\om, H_{\om,n}}(x):=\inf\{k\ge 1, T^k_{\om}(x)\in H_{\sigma^k\om, n}\}.
$$
Under the assumptions which allowed us to get Gumbel's law, in particular (\ref{qqss}) $
\mu_{\omega,0}(H_{\om,n})=\frac{t_\omega+\xi_{\omega,n}}{n},$ 
with $\xi_{\omega,n}$ going to zero when $n\rightarrow \infty,$  we can also prove that 
$$\lim_{n\to\infty}\mu_{\om,0}\left(\tau_{\om, H_{\om,n}} \mu_{\om,0}(H_{\om, n})>t_{\om}\right) = \exp\left(-{\int_\Om t_{\om} \theta_{\om,0}\ dm(\omega)}\right).
	$$
 This result suggests that the exponential law given by the extreme value distribution describes the time between  successive events in a Poisson process.

 \subsection{Main result and literature review}

Given $\omega\in \Omega$ and a sequence of random target sets $H_{\omega, n}\subset M,$
we consider the random variables
 \begin{equation}\label{ffss}
 S_{\om,n,n}(x):=\sum_{j=0}^{n-1}{\ind}_{H_{\sg^j\om,n}}(T_\om^jx),
 \end{equation}
 where the initial condition $x\in M$ is distributed according to $\mu_{\omega, 0}$. 
The main result of this work is that for an appropriate choice of target sets, the distribution of the random variables \eqref{ffss}
converge to a compound Poisson distribution as $n\rightarrow \infty.$ 
In order to establish this result, the measure of the target sets should follow the scaling (\ref{qqss}). Our proof relies on computing 
the characteristic function of the random variable (\ref{ffss}) and showing that, as $n\rightarrow \infty,$ 
this characteristic function converges pointwise to the characteristic function of a random variable which is discrete and infinitely divisible, and therefore compound Poisson distributed. 
Our main result could be summarised as follows. See Theorem~\ref{thm CF} for a detailed statement.
\begin{thmx}\label{main thm CF}
    For the  random perturbed system introduced above and  satisfying assumptions \eqref{C1}--\eqref{C8} and \eqref{S} (see Section \ref{sec: random setting} for full details), we have that for each $s\in \mathbb R$ 
    and $m$-a.e. $\oio$
    \begin{align} \label{ft}
		\lim_{n\to\infty}\mu_{\om,0}\lt(e^{is S_{\om,n,n}}\rt)
		=
		\lim_{n\to\infty}\frac{\lm_{\om,n,s}^n}{\lm_{\om,0}^n}
		=
		\exp\left(-(1-e^{is})\int_\Om t_\om\ta_\om(s)\, dm(\om)\right).    
    \end{align}
\end{thmx}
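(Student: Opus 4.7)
The plan is to express the characteristic function of $S_{\omega,n,n}$ as a ratio of iterated cocycle quantities, apply a quenched spectral perturbation result to extract its leading asymptotics, and then invoke the Birkhoff ergodic theorem. The starting point is the combinatorial identity
$$
\cL_{\omega,n,s}^n f = \cL_{\omega,0}^n\bigl(f\cdot e^{is S_{\omega,n,n}}\bigr),
$$
which I would verify by induction on $n$ using the definition $\cL_{\omega,n,s}(g)=\cL_{\omega,0}(g\,e^{is\mathbbold{1}_{H_{\omega,n}}})$ together with the transfer-operator duality $\cL_{\omega,0}(g\cdot(h\circ T_\omega))=h\cdot\cL_{\omega,0}(g)$. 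Pairing this with the dual random eigenvector $\nu_{\sigma^n\omega,0}$ (which satisfies $\cL_{\omega,0}^\ast\nu_{\sigma\omega,0}=\lambda_{\omega,0}\nu_{\omega,0}$) and using the decomposition $d\mu_{\omega,0}=\phi_{\omega,0}\,d\nu_{\omega,0}$ produces the key identity
$$
\mu_{\omega,0}\bigl(e^{isS_{\omega,n,n}}\bigr) = \frac{\nu_{\sigma^n\omega,0}\bigl(\cL_{\omega,n,s}^n\phi_{\omega,0}\bigr)}{\lambda_{\omega,0}^n},
$$
tying the characteristic function directly to the perturbed cocycle.

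Next, I would show that the numerator is asymptotic to $\lambda_{\omega,n,s}^n$. This is where the quenched, complex-parameter analogue of the Keller--Liverani machinery developed in \cite{AFGTV-TFPF} enters: the perturbation $\cL_{\omega,0}-\cL_{\omega,n,s}$ is multiplication by $1-e^{is\mathbbold{1}_{H_{\omega,n}}}=(1-e^{is})\mathbbold{1}_{H_{\omega,n}}$, whose size is controlled by $\mu_{\omega,0}(H_{\omega,n})\to 0$. Under assumptions \eqref{C1}--\eqref{C8} and \eqref{S}, the perturbed cocycle admits a top Lyapunov multiplier $\lambda_{\omega,n,s}$ with equivariant eigendata $(\phi_{\omega,n,s},\nu_{\omega,n,s})$ converging to $(\phi_{\omega,0},\nu_{\omega,0})$, together with a uniform spectral gap. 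Decomposing $\cL_{\omega,n,s}^n\phi_{\omega,0}$ along the leading Oseledets direction gives
$$
\nu_{\sigma^n\omega,0}\bigl(\cL_{\omega,n,s}^n\phi_{\omega,0}\bigr) = \lambda_{\omega,n,s}^n\,\nu_{\omega,n,s}(\phi_{\omega,0})\,\nu_{\sigma^n\omega,0}(\phi_{\sigma^n\omega,n,s})\bigl(1+o(1)\bigr),
$$
and both inner-product factors tend to $1$ by convergence of the perturbed eigendata, yielding the first equality in \eqref{ft}.

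For the ratio $\lambda_{\omega,n,s}^n/\lambda_{\omega,0}^n$, I would invoke the quenched first-order expansion from \cite{AFGTV-TFPF}, adapted to the complex parameter $s$, to obtain
$$
\frac{\lambda_{\omega,n,s}}{\lambda_{\omega,0}} = 1-(1-e^{is})\,\theta_\omega(s)\,\mu_{\omega,0}(H_{\omega,n})+o\bigl(\mu_{\omega,0}(H_{\omega,n})\bigr),
$$
where $\theta_\omega(s)$ is the $s$-dependent generalisation of the extremal index. Taking logarithms, summing over $j=0,\dots,n-1$, substituting the Kac scaling $\mu_{\sigma^j\omega,0}(H_{\sigma^j\omega,n})=(t_{\sigma^j\omega}+\xi_{\sigma^j\omega,n})/n$ from \eqref{qqss}, and using $\log(1-x)=-x+O(x^2)$ produce
$$
\log\frac{\lambda_{\omega,n,s}^n}{\lambda_{\omega,0}^n} = -(1-e^{is})\cdot\frac{1}{n}\sum_{j=0}^{n-1}\theta_{\sigma^j\omega}(s)\,t_{\sigma^j\omega}+o(1).
$$
Birkhoff's ergodic theorem for the ergodic invertible system $(\Omega,m,\sigma)$ identifies the limit as $\int_\Omega\theta_\omega(s)\,t_\omega\,dm(\omega)$, and exponentiating delivers the claimed formula.

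The principal obstacle is uniform-in-$n$ control of the spectral perturbation theory in the \emph{complex} regime: the cocycle Keller--Liverani machinery of \cite{AFGTV-TFPF} was built for real open-system perturbations, whereas here the weight $e^{is\mathbbold{1}_{H_{\omega,n}}}$ takes values on the unit circle, so the Lasota--Yorke-type estimates, quasi-compactness, and Oseledets splitting must be re-established in an equivariant fashion and uniformly over $\omega$ on a set of full $m$-measure. A secondary technical point is ensuring that the remainder $o(\mu_{\omega,0}(H_{\omega,n}))$ is tempered enough that its Birkhoff average vanishes almost surely, which is precisely where the integrability and tempering clauses of \eqref{C1}--\eqref{C8} and \eqref{S} are decisive. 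Once these ingredients are in place, the compound Poisson conclusion is immediate, since $\exp\bigl(-(1-e^{is})\int t_\omega\theta_\omega(s)\,dm(\omega)\bigr)$ has the canonical form of the log-characteristic function of a compound Poisson variable, so Lévy's continuity theorem yields the weak limit matching \eqref{def CP}.
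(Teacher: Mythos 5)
Your proposal is correct and follows essentially the same route as the paper. The key identity $\mu_{\omega,0}\bigl(e^{isS_{\omega,n,n}}\bigr)=\lambda_{\omega,0}^{-n}\nu_{\sigma^n\omega,0}\bigl(\cL_{\omega,n,s}^n\phi_{\omega,0}\bigr)$, the spectral splitting of $\cL_{\omega,n,s}^n\phi_{\omega,0}$ with convergence of the perturbed eigendata, the quenched first-order expansion $\lambda_{\omega,n,s}/\lambda_{\omega,0}\approx 1-(1-e^{is})\theta_\omega(s)\mu_{\omega,0}(H_{\omega,n})$, and the Birkhoff averaging under the Kac scaling \eqref{S} are exactly the ingredients the paper deploys (see \eqref{eq: mu0 of pert sum}, Lemma \ref{lem: checking P7 and P8}, Theorem \ref{thm: dynamics perturb thm}, and the invocation of the proof of Theorem 2.4.5 of \cite{AFGTV-TFPF}; the paper also normalizes $\nu_{\omega,0}(\phi_{\omega,n,s})=1$ in \eqref{C2}, so that factor is exactly $1$ rather than merely tending to $1$).
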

   The quantity $\lm_{\om,0}$ is the top Lyapunov multiplier of the operator $\cL_{\omega, 0},$  while $\lm_{\om,n,s}$ is the top multiplier of a perturbed operator $ \cL_{\om,n,s}(f):=\cL_{\om,0}(f\cdot e^{is\ind_{H_{\om,n}}})$.
   The quantities $\lambda_{\om,0}^n$ and $\lambda_{\om,n,s}^n$ are products of $n$ such multipliers along the $\omega$-orbit of length $n$.
   Notice that as soon as we have the limit characteristic function on the right hand side of (\ref{ft}), we could use L\'evy's inversion formula to get the mass distribution of \eqref{ffss}, namely
   $$
   \lim_{n\rightarrow \infty}\mu_{\omega,0}(S_{\omega, n,n}=k)=\lim_{T\to\infty}\frac{1}{2T}\int_{-T}^Te^{-isk}\exp\left(-(1-e^{is})\int_\Om t_\om\ta_\om(s)\, dm(\om)\right) ds.
   $$
   To obtain the limit \eqref{ft} we again use our generalized version of the Keller-Liverani theorem, with the associated extremal index $\ta_{\om}(s)$ which is given by
   $$
   \lim_{n\to\infty}
		\frac{\lm_{\om,0}-\lm_{\om,n,s}}{\lm_{\om,0}\mu_{\om,0}(H_{\om,n})}
		=(1-e^{is})\ta_{\om}(s).
  $$
  
It is not the first time that the Keller-Liverani perturbation theorem has been used to get the Poisson distribution for the number of visits in small sets. We quote here the result  \cite{Zhang2020} which holds in the deterministic setting and which exhibits  several differences when compared to our approach. First of all \cite{Zhang2020} computed the Laplace transform  of the random variable counting the number of visits in a decreasing sequence of sets with bounded cylindrical lengths around a non-periodic point. In the limit of  vanishing measure of the target set, the Laplace transform converges to the Laplace transform of the usual Poisson distribution, which from now on will be referred to as {\em standard Poisson}. This result does not cover the case when the target set is around a periodic point or when it has a different geometrical shape. Furthermore, even if the limiting Laplace transform could in principle be computed, it is not clear how to invert it in order to get the probability mass distribution. 

In Section \ref{sec deterministic} of this paper we will sketch how to apply our perturbation scheme to deterministic systems and we will show that for the aforementioned case of the target set around periodic points, we will get, as expected, the P\'olya-Aeppli distribution. We note that if the 
target sets are balls around a point $z$, then we have a dichotomy regarding the convergence of the distribution of the number of visits
for systems with a strong form of decay of correlations. Either $z$ is periodic and in that case we have convergence to a P\'olya-Aeppli distribution, or $z$ is not periodic and in
that case we have  convergence to a standard Poisson process, see \cite{AFV} for a rigorous proof of this claim for systems which exhibit correlation decay with respect to $L^1$ observables.  Other compound Poisson distributions will emerge in the random setting, where the notion of periodicity is lost, and we will give a few examples of them in Section~\ref{S:examples}. We will see that it is easier to construct examples that are not standard Poisson and this could be interesting for application to the real world, where the noise is a constituent of the environment, see \cite{CFVY} for an application to climate time series.

For quenched random dynamical systems, there \cgtadd[r]{are already two contributions proving convergence to a compound Poisson distribution. The first one}, \cite{FFMV20},  is based on  analogous results in the deterministic setting due to A-C Freitas, J-M Freitas and M. Magalhes \cite{
FFM18}.  What is actually shown is  the convergence of marked point processes for  random dynamical systems given by fibred Lasota Yorke maps. The technique is quite different from our spectral approach,  even though it uses  the Laplace transform to establish the convergence of random measures. In \cite{FFMV20}, our threshold assumption (\ref{qqss}) is replaced by the H\"usler type condition 
\begin{equation}\label{Hascon}
\sum_{j=0}^{N-1}\mu_{\omega,0}\left(h_{\sigma^j\omega}(T^j_{\omega}(x))> z_{\sigma^j\omega, N}\right)\rightarrow t,
\end{equation}
which has been used to deal with non-stationary extreme value theory, see \cite{H86, FFV017, FFMV016}, and
is proved under strong mixing requirements of the driving system guaranteeing at least polynomial decay of correlations for the marginal measure $\tilde{\mu}=\int \mu_{\omega,0}\ dm$.  In contrast, in this present work  
we instead only need that the driving system be ergodic. Moreover the unperturbed transfer operator is defined with the usual geometric potential,  forcing all of the fiberwise conformal measures to be equal to the Lebesgue measure. Finally, the only example completely treated gives convergence to a standard Poisson distribution. Recently another paper dealt with quenched Poisson processes for random subshifts of finite type, \cite{CS20}. It is proved  that hitting times to dynamically defined cylinders converge to a Poisson point process under the law
of random equivariant measures with super-polynomial decay of correlations. 
\cgtadd[r]{The second  
recent paper on quenched systems \cite{AHV24}, \jaadd[r]{which became available after the completion of the present work,} 
adapts  
the deterministic framework of \cite{HV20} to the random setting. 
The approach of \cite{AHV24} is based on a  probabilistic block-approximation, which does not require exponential decay of correlations of the equivariant measure $\mu_{\omega, 0}$.  Instead, \cite{AHV24} requires that the correlations decay with some polynomial rate together with the correlations of the marginal measure $\tilde{\mu}$, see above. \jaadd[r]{The allowance of polynomial decay} requires the stronger assumption that the driving of the base dynamical system $\sigma:\Om\to \Om$ must be mixing, \jaadd[r]{while we only require ergodicity in the base. In addition,} 
the threshold assumption of \cite{AHV24} fixes the time of observation by averaging over the measures of the target sets, and is therefore  a type of annealed version of our scaling (\ref{qqss}), see Remark \ref{otsf}. The convergence to the limit distribution is 
obtained along subsequences that require a certain kind of summability. \jaadd[r]{Despite the difference of assumptions,} all of the examples in \cite{AHV24} are similar to ours.}

In the deterministic framework, besides  \cite{FFM18}, two other recent papers developed compound Poisson statistics for general classes of dynamical systems, \cite{HV20} and \cite{GHV22}. Both papers aimed to compare a given probability measure -- namely the distribution of the number of visits to a set 
-- to a compound Poisson distribution.  This will give  an error for the total variation distance between the two distributions. Any compound Poisson distribution depends upon a set of parameters $\lambda_l, l\ge 1.$ It has been shown in \cite{HV20}, that those parameters are related to another sequence $\alpha_l, l\ge 1,$  which are the limits of the distribution of higher-order returns and therefore are in principle computable.  Whenever those limits  exist and the $\al_l$ 
verify a summability condition, the error term can be evaluated in two possible ways.  In \cite{HV20}, inspired in part by \cite{ChCo13}, 
the error term is evaluated using a very general approximation theorem that allows one to measure how close a return-time distribution is to  a compound binomial distribution, which in the limit converges to a compound
Poisson distribution. 
In \cite{GHV22} the error term is instead evaluated by an adaption of the classical Stein-Chen method \cite{CHS}. The two approaches target different classes of dynamical systems. The approach of \cite{HV20} is more geometric and is adapted to differentiable systems which are not necessarily exponentially mixing, whereas the approach of \cite{GHV22} is more devoted to symbolic and $\phi$-mixing systems.

The history of Poissonian distributions for the number of visits in small sets dates back to the seminal papers by Pitskel (1991)\cite{P91}  and Hirata
(1993) \cite{H93}.  There have been several other contributions employing different techniques; we provide here a non-exhaustive list: \cite{Abadi2001, HV2009, HHSSVV, KR14, KY18, HPS14, CHAZCOLL, Zwei16, Zwei17}. 
A complementary approach to the statistics
of the number of visits has been developed in the framework of extreme value theory, where
it is more often called point process, or particular kinds of it as the marked point process; besides the papers quoted above \cite{FFMV20, FFM18}, see also \cite{FRE13, AFV}. The distribution
of the number of visits to vanishing balls has been studied for systems modeled by a Young
tower in \cite{ChCo13, PESA16, HAWA16, HAYA17, YANG21}, and for uniformly hyperbolic systems in \cite{CFFHN} and \cite{AHV}.  Recurrence in billiards
provided recently several new contributions; for planar billiards in \cite{PESA10,  FRE14, CarneyNicolZhang, CarneyHollandNicol}  
and in \cite{pene2020spatio}  spatio-temporal Poisson processes
were obtained by recording not only the successive times of visits to a set, but also the
positions.

\section{The Deterministic Setting: A Motivational Approach}\label{sec deterministic}
While our results in the random setting, presented in Section \ref{sec: random setting}, imply the compound Poisson statistics for the deterministic setting, 
in order to motivate our results for random dynamical systems, we first give a sketch of our approach to obtain compound Poisson statistics in the deterministic setting via Keller-Liverani perturbation theory. 
We will also present a few general considerations that can be immediately translated to the random setting.

Suppose that \cgtadd[r]{$T:I\to I$ is 
a piecewise smooth ($C^{1+\alpha}$), piecewise expanding map with finitely many branches.
Let $\mu$ be an absolutely continuous invariant measure for $T$.  Consider a decreasing sequence of \jaadd[r]{(non-random) target}s $H_n$ shrinking to a finite set.}  
We aim to compute the following distribution
$$
\lim_{n\rightarrow \infty}\mu\left(\sum_{i=0}^{k-1}\ind_{H_n}(T^ix)=j\right)
$$
for $j=0,1,\dots$, where, as usual, we set $k=n$ and assume the scaling
\begin{align}\label{eq det scaling}
n={\lt\lfloor\frac{t}{\mu(H_n)}\rt\rfloor}.
\end{align}
Rather than following the approach of Zhang \cite{Zhang2020}, who computed the moment generating function, we compute the characteristic function (CF)  of the sum 
\begin{equation}
\label{Snndet}
S_{n,k}(x):=\sum_{i=0}^{k-1}\ind_{H_n}(T^ix).
\end{equation}
We show that it converges to the CF of a random variable (RV) $Z$, which has a compound Poisson distribution. We let $\cL$ denote the Perron-Frobenius operator acting on the space $\BV$ of complex-valued bounded variation functions, and for each $s\in\RR\bs\{0\}$ and $n\in\NN$ we define the perturbed operator
$$
\L_{n,s}(f)=\L(e^{is\ind_{H_n}}f).
$$ 
Note that if $s=0$,  then $\L_{n,0}=\L$ for all $n\in\NN$. 
Iterates of this operator are given by 
$$
\L_{n,s}^k(f)=\L^k(e^{isS_{n,k}}f)
$$
for each $n\geq 1$. If $h$ is the density of $\mu$ (with respect to Lebesgue), then 
$$
\int e^{isS_{n,k}} h\ dx=\int \L^k(e^{isS_{n,k}} h)\ dx=\int \L_{n,s}^k(h)\ dx.
$$
As usual we suppose that the operators $\L$ and $\L_{n,s}$ are quasi-compact, and in particular,
$$
\L_{n,s}(\cdot)=\lambda_{n,s}\nu_{n,s}(\cdot)\phi_{n,s}+Q_{n,s}(\cdot),
$$
where the above objects are deterministic versions of the corresponding random objects in (\ref{C3}).
As the \jaadd[r]{target}s $H_n$ shrink to a finite set, the operator $\L_{n,s}$ approaches $\L.$ The triple norm difference between the two operators is given by
\begin{equation}\label{rt}
|||(\L-\L_{n,s})|||\le |1-e^{is}|\mu(H_n)\le 2\mu(H_n).
\end{equation}
In Keller-Liverani \cite{keller_rare_2009} they consider the following normalizing quantity
\begin{align}\label{eq Delta formula}
\Delta_{n,s}=\int (\L-\L_{n,s})h\ dx=(1-e^{is})\mu(H_n).
\end{align}
In \cite[Section 4]{keller_rare_2012}, using the Keller-Liverani perturbation theory of \cite{KL99}, Keller shows that the assumptions of \cite{keller_rare_2009} hold (namely (A1)-(A6) in \cite{keller_rare_2009} or (1)-(6) in \cite{keller_rare_2012}) for a similar setting.\footnote{In fact Keller shows the assumptions (A1)-(A6) of \cite{keller_rare_2009} (the deterministic versions of our assumptions \eqref{C1}--\eqref{C7}) hold for further examples of shift maps and higher dimensional maps for which our theory also applies.}
Thus, applying Theorem 2.1 of \cite{keller_rare_2009}, we have
\begin{align}\label{def det theta}
    \lim_{n\to\infty}\frac{1-\lm_{n,s}}{\Dl_{n,s}}=1-\sum_{k=0}^\infty q_k(s) =:\ta(s),
\end{align}
where 
\begin{align}\label{eq thm KL09}
    q_k(s)&=\lim_{n\rightarrow \infty}\frac{1}{(1-e^{is})\mu(H_n)}\int (\L-\L_{n,s})\L^k_{n,s}(\L-\L_{n,s})(h)\ dx,
\end{align}
and the limit in \eqref{eq thm KL09} is assumed to exist. 
Calculating $q_k$ yields 
\begin{align}
    q_k(s)&=\lim_{n\rightarrow \infty}\frac{1}{(1-e^{is})\mu(H_n)}\int (\L-\L_{n,s})\L^k_{n,s}(\L-\L_{n,s})(h)\ dx
    \nonumber\\
    &=
    \lim_{n\rightarrow \infty}\frac{1}{(1-e^{is})\mu(H_n)}\int (1-e^{is\ind_{H_n}(T^{k+1}(x))})e^{is[\ind_{H_n}(T(x))+\dots +\ind_{H_n}(T^{k}(x))]}(1-e^{is\ind_{H_n}(x)})h(x)\ dx
    \nonumber\\
    &=\lim_{n\rightarrow \infty}\frac{1}{(1-e^{is})\mu(H_n)}\int_{H_n\cap T^{-(k+1)}(H_n)} (1-e^{is})^2e^{is[\ind_{H_n}(T(x))+\dots +\ind_{H_n}(T^{k}(x))]}h(x)\ dx
    \nonumber\\
    &=\lim_{n\rightarrow \infty}\frac{(1-e^{is})}{\mu(H_n)}\int_{H_n\cap T^{-(k+1)}(H_n)} e^{is[\ind_{H_n}(T(x))+\dots +\ind_{H_n}(T^{k}(x))]}h(x)\ dx.
    \label{cuc}
\end{align}
In view of (\ref{cuc}), we see that the orbit $T^l(x)$ can return to $H_n$ for $\ell=1,\dots,k.$ 
Each return adds a multiplicative factor $e^{is},$ for a positive $\ell.$ Therefore, for $\ell=0,\dots,k$, we define
$$
\beta_n^{(k)}(\ell):=\frac{\mu(x; x\in H_n, T^{k+1}(x)\in H_n, \sum_{j=1}^k\ind_{H_n}(T^jx)=\ell)}{\mu(H_n)}.
$$
Now suppose that the limit
$$
\beta_k(\ell):=\lim_{n\rightarrow \infty}\beta_n^{(k)}(\ell)
$$
exists.
Then we have the following alternate formulation of the $q_k$, 
\begin{align}
q_k(s) &=\lim_{n\rightarrow \infty}\frac{1}{1-e^{is}}\sum_{\ell=0}^k(1-e^{is})^2e^{i\ell s}\beta_n^{(k)}(\ell)
=(1-e^{is})\sum_{\ell=0}^ke^{i\ell s}\beta_k(\ell).
\label{eq alt q}
\end{align}
It follows from the assumption (A3) of \cite{keller_rare_2009} (the deterministic version of our assumption \eqref{C4}) that the sum 
\begin{align}\label{eq beta sum}
    \Sg:=\sum_{k=0}^{\infty}\sum_{\ell=0}^k\beta_k(\ell)
\end{align} 
converges absolutely\footnote{For more details in the random setting, see the arguments presented in Section \ref{sec: random setting}.}. Thus, combining \eqref{def det theta} and  \eqref{eq alt q} we obtain the following alternate expression for $\ta(s)$: 
\begin{align}\label{eq alt theta}
\theta(s)=1-(1-e^{is})\sum_{k=0}^{\infty}\sum_{\ell=0}^ke^{i\ell s}\beta_k(\ell).
\end{align}
In view of \eqref{eq thm KL09}, and using \eqref{eq det scaling} and \eqref{eq Delta formula}, we have 
$$
1-\lambda_{n,s}\approx \theta(s) \Delta_{n,s} =\theta(s) (1-e^{is})\frac{t}{n}.
$$
Following the approach of \cite{keller_rare_2012}, exponentiating the previous formula and taking $n\to\infty$, we 
obtain the following theorem. 
\begin{theorem}
\label{thm deterministic CF}
    Assume that the scaling \eqref{eq det scaling} holds and that conditions (A1)--(A7) of \cite{keller_rare_2009} hold for the operators $\cL_{n,s}$ for each $s\in\RR$ and all $n\in\NN$ sufficiently large. Then we have
    \begin{equation}\label{as}
        \lim_{n\rightarrow \infty}\int e^{isS_{n,n}} h\  dx=e^{-\theta (s) (1-e^{is}) t}=:\vp(s).
    \end{equation}
\end{theorem}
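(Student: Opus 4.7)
The plan is to combine the basic identity
$$\int e^{isS_{n,n}} h\, dx = \int \mathcal{L}_{n,s}^n(h)\, dx$$
with the quasi-compact spectral decomposition of $\mathcal{L}_{n,s}$ to obtain
$$\int e^{isS_{n,n}} h\, dx = \lambda_{n,s}^n\, \nu_{n,s}(h) \int \phi_{n,s}\, dx + \int Q_{n,s}^n(h)\, dx,$$
and then analyze the three factors in the $n\to\infty$ limit. This is the standard Keller-style argument (as in \cite{KL99,keller_rare_2012}) adapted to our characteristic-function setting.

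First I would handle the leading factor $\lambda_{n,s}^n$. From the first-order formula \eqref{def det theta} together with \eqref{eq Delta formula} we have
$$1 - \lambda_{n,s} = \theta(s)(1-e^{is})\mu(H_n) + o(\mu(H_n)),$$
and combining with the Kac scaling \eqref{eq det scaling}, which gives $n\mu(H_n) \to t$, and Taylor expanding $\log \lambda_{n,s} = -(1 - \lambda_{n,s}) + O((1-\lambda_{n,s})^2)$, we obtain
$$n \log \lambda_{n,s} \longrightarrow -\theta(s)(1-e^{is})t,$$
so that $\lambda_{n,s}^n \to e^{-\theta(s)(1-e^{is})t} = \varphi(s)$. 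The error estimate is sufficient because $n \cdot o(\mu(H_n)) = o(1)$ under the Kac scaling.

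Second, I would show the prefactor $\nu_{n,s}(h)\int \phi_{n,s}\, dx$ tends to $1$. Under assumptions (A1)--(A7) of \cite{keller_rare_2009}, the Keller-Liverani theorem guarantees that the rank-one spectral projectors depend continuously on the perturbation in the weak norm as $|||\mathcal{L} - \mathcal{L}_{n,s}||| \to 0$, giving $\phi_{n,s} \to \phi_{0,0} = h$ and $\nu_{n,s} \to \nu_{0,0} = \Leb$ in the appropriate sense. With the standard normalizations $\nu_{n,s}(\phi_{n,s}) = 1 = \int h\, dx$, this yields $\nu_{n,s}(h) \to 1$ and $\int \phi_{n,s}\, dx \to 1$. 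The remainder is handled by the uniform spectral gap that is built into the Keller-Liverani framework: there is a constant $r<1$ and $C>0$ such that $\|Q_{n,s}^n\|_{\mathrm{op}} \le C r^n$ for all $n$ sufficiently large, so $\int Q_{n,s}^n(h)\, dx \to 0$. Assembling the three limits yields \eqref{as}.

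The main obstacle is the second step, namely verifying continuity of the spectral projections under a perturbation that is small only in the weak triple norm; this is the substantive content of Keller-Liverani perturbation theory, and it cannot be bypassed. A secondary subtlety is that \eqref{def det theta} is only a first-order asymptotic, so one must ensure its error term survives multiplication by $n \sim t/\mu(H_n)$; the Kac scaling is exactly strong enough for this. All remaining steps are essentially bookkeeping around these two inputs.
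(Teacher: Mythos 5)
Your proposal is correct and follows essentially the same route the paper sketches: rewrite $\int e^{isS_{n,n}}h\,dx=\int\cL_{n,s}^n(h)\,dx$, apply the quasi-compact decomposition, use the Keller–Liverani first-order asymptotic $1-\lambda_{n,s}\approx\theta(s)(1-e^{is})\mu(H_n)$ together with the Kac scaling $n\mu(H_n)\to t$ to get $\lambda_{n,s}^n\to e^{-\theta(s)(1-e^{is})t}$, and dispose of the prefactor and remainder via the continuity of spectral data and the uniform contraction of $Q_{n,s}$ supplied by (A1)–(A7). This matches the paper's appeal to \cite{keller_rare_2012}; you have simply written out the bookkeeping that the paper leaves implicit.
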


Taking \eqref{eq alt theta} together with the absolute convergence of the series \eqref{eq beta sum}, we see that 
 $\vp(s)$ is  continuous in $s=0,$ and therefore is the characteristic function of some random variable $Z$ on some probability space $(\Gamma', \mathcal{B}', \mathbb{P}')$ to which the sequence of random variables (using (\ref{eq det scaling}) and (\ref{Snndet}))
\begin{equation}\label{mil}
Z_n:=\sum_{i=0}^{{\lt\lfloor\frac{t}{\mu(H_n)}\rt\rfloor}}\ind_{H_n}\circ T^i\end{equation}
converges in distribution; this follows from 
 L\'evy Continuity Theorem (see \cite[Theorem 3.6.1]{lukacs}).  We let $\nu_Z$ denote the distribution of $Z,$ and 
 we now denote the invariant measure $\mu$ by $\PP$. 
It follows from the Portmanteau Theorem that the variable $Z$ is non-negative and integer valued as the distributional limit of a sequence of integer-valued RV\footnote{If $\nu_n$ denotes the distribution of $Z_n$, then the Portmanteau Theorem implies that 
$
    1= \limsup_{n\to\infty}\nu_n(\NN_0)=\nu_Z(\NN_0), 
$
where $\NN_0=\NN\cup\{0\}$.
}. 
Moreover it  is a standard result that  
\begin{align}\label{eq P Zn}
\mathbb{P}'(Z=k)=\nu_{Z}(\{k\})=\lim_{n\rightarrow \infty}\mathbb{P}(Z_n=k).
\end{align}
Note also that $Z$ is clearly infinitely divisible, since $e^{-\theta (s) (1-e^{is}) t}=(e^{-\theta (s) (1-e^{is}) t/N})^N$. This along with the fact that $Z$ is non-negative and integer valued  imply that  $Z$ has a compound Poisson distribution, see for instance \cite[p. 389]{univ} or \cite[Section 12.2]{fellerv1}. 
To obtain the probability mass function for $Z$ we can apply the L\'evy inversion formula to get 
\begin{align}\label{eq prob Z}
\nu_{Z}(\{k\})=\lim_{T\rightarrow \infty}\frac{1}{2T}\int_{-T}^{T}e^{-isk}e^{-\theta(s) (1-e^{is}) t}ds.
\end{align}
Recall the fact that $D_s^k\vp(0)=i^k\EE(Z^k)$ whenever the $k$th moments exists, where $D_s$ denotes differentiation with respect to $s$. Thus, assuming that $\vp$ is twice differentiable, elementary calculations\footnote{See Section \ref{sec: random setting} for more details in the random setting.} give that
$$
\EE(Z)=t
\qquad\text{ and }\qquad
\Var(Z)=t(1+2\Sg).
$$
 If no confusion arises, we will denote the underlying probability with $\mathbb{P},$ instead of $\mathbb{P}',$  and its moments with  $\mathbb{E},$ $\text{Var},$ etc., which are actually computed with the distribution $\nu_Z.$

\subsection{Further properties of the compound Poisson RV $Z$}
We now point out a few other properties enjoyed by compound Poisson random variables like our $Z.$ First of all such a variable may be written as \begin{equation}\label{eds}
Z:=\sum_{j=1}^N X_j,
\end{equation}
where the $X_j$ are  iid random variables  defined on  same probability space, and $N$ is Poisson distributed with parameter $\vta.$ There is a simple relationship between the characteristic function $\vp(s)$ of the variables $Z$ and $X_1$, namely we can write $\vp(s)=\phi_Z(s)$ where $\phi_Z(s)$ is given by 
\begin{equation}\label{ggg}
  \phi_Z(s)=e^{\vta (\phi_{X_1}(s)-1)};   
\end{equation}
see \cite{univ}. 
Then we use the well-known fact  that
\begin{equation}\label{cvb}
\mathbb{P}(Z=k)=\sum_{\ell=0}^{\infty}\mathbb{P}(N=\ell)\mathbb{P}(S_\ell=k),
\end{equation}
where $S_\ell=\sum_{i=1}^\ell X_i,$ and  $S_0$ is the random variable identically equal to $0.$ 
A standard result give $\mathbb{E}(Z)=\vta\mathbb{E}(X_1),$ and for $Z$ in (\ref{eds}) it follows immediately by taking the derivative of the characteristic function $\phi_Z$ in (\ref{ggg}) at $0,$ that $\mathbb{E}(Z)=t.$
Therefore $\vta=\frac{t}{\mathbb{E}(X_1)}.$
Moreover if follows from the definition of the random variable $Z_n$ (\ref{mil}) that (Gumbel's law): 
$$
\nu_Z(\{0\})=\lim_{n\rightarrow \infty} \mu(Z_n=0)=e
^{-\theta_0 t},$$ where $\theta_0$ is the extremal index.

Put now $k=0$ in (\ref{cvb}); then the only term which survives will be for $\ell=0$ 
and in this case we get $$\mathbb{P}(Z=0)=\mathbb{P}(N=0)=e^{-\vta}=e^{-\frac{t}{\mathbb{E}(X_1)}}$$
and therefore
$$
\mathbb{E}(X_1)=\theta_0^{-1}.
$$ This relationship was also obtained in \cite{HV20}, where a dynamical interpretation of the random variable $X_j$ was given. 
\\
Furthermore, using the fact that $\vta=t\ta_0$, the fact that $\vp(s)=\phi_Z(s)$, and \eqref{ggg}, we get that 
\begin{align}\label{eq det cf X}
    \phi_{X_1}(s)=\frac{\ta(s)(e^{is}-1)}{\ta_0}+1.   
\end{align}
Taking derivatives of $\phi_Z$ and $\phi_{X_1}$ we can additionally show\footnote{See Section \ref{sec: random setting} for details of the calculation in the random setting.}
\begin{align*}
    \Var(X_1) = \frac{\ta_0(1+2\Sg)-1}{\ta_0^2}.
\end{align*}
Note that the probability generating function (PGF) for our compound Poisson RV $Z$ is given by 
\begin{align}\label{eq pgf Z}
    G_Z(s)=e^{\vta(g_{X_1}(s)-1)},
\end{align}
where $g_{X_1}(s)$ is the PGF for the RV $X_1$. Set $\~g_{X_1}=\vta g_{X_1}$, and so we have $G_Z(s)=\exp(\~g_{X_1}(s)-\vta)$.
Then Hoppe's form of the generalized chain rule applied to $G_Z(s)$ gives 
\begin{align*}
    D_s^KG_Z(s) 
    =
    G_Z(s)\lt(\sum_{k=0}^K\frac{(-1)^k}{k!}\sum_{j=0}^k(-1)^j\binom{k}{j}(\~g_{X_1}(s))^{k-j}D_s^K(\~g_{X_1}^j)(s)\rt).
\end{align*}
Then 
\begin{align*}
    D_s^KG_Z(0) 
    &=
    G_Z(0)\lt(\sum_{k=0}^K\frac{(-1)^k}{k!}\sum_{j=0}^k(-1)^j\binom{k}{j}(\~g_{X_1}(0))^{k-j}D_s^K(\~g_{X_1}^j)(0)\rt)
    \\
    &=
    e^{-t\ta_0}\lt(\sum_{k=0}^K\frac{1}{k!}D_s^K(\~g_{X_1}^k)(0)\rt)
\end{align*}
since $G(0)=e^{-t\ta_0}$ and $g_{X_1}(0)=0$. Since $\PP(Z=K)=D_s^KG_Z(0)/K!$, we have 
\begin{align}\label{eq pgf prob Z}
    \PP(Z=K)
    &=
    \frac {e^{-t\ta_0}}{K!}\lt(\sum_{k=0}^K\frac{1}{k!}D_s^K(\~g_{X_1}^k)(0)\rt).
\end{align}

\begin{remark}\label{rem prob X}
Note that $\PP(Z=k)$ can be calculated using \eqref{eq P Zn} or \eqref{eq prob Z}.
To calculate $\PP(X_1=k)$ we can apply the L\'evy inversion formula to the CF $\phi_{X_1}$ to get 
\begin{align}\label{eq prob X inv det}
    \PP(X_1=k)
    =
    \lim_{T\rightarrow \infty}\frac{1}{2T}\int_{-T}^{T}e^{-isk}\lt(\frac{\ta(s)(e^{is}-1)}{\ta_0}+1\rt)ds.
\end{align}
Note that the inversion formula can be used to numerically approximate the distribution of $X_1$. 
Alternatively, we can recursively solve for $D_s^k\~g_{X_1}(0)$ (and thus $\PP(X_1=k)$) in terms of $\PP(Z=k)$ using \eqref{eq pgf prob Z}.
\end{remark}

\subsection{Deterministic Examples}
We now present several examples in the interval map setting we have just described.
We first show in Examples~\ref{ex det 1} and \ref{ex det 2} that if the \jaadd[r]{target}s $H_n$ are centered around a single point $x_0$, 
\jaadd[r]{whose orbit has nonempty intersection with the set of discontinuities for $T$, }
then the return time distribution one can get is either standard Poisson (if $x_0$ is aperiodic) or P\'olya-Aeppli (if $x_0$ is periodic). We then show in Example~\ref{ex det 3} that if the \jaadd[r]{target}s $H_n$ contain finitely many connected components which are centered around finitely many points, each of which has an orbit that is distinct from the others, then the return time distribution can be expressed as an independent sum of standard Poisson and P\'olya-Aeppli random variables. Following this  
classification of the return time distributions for interval maps with \jaadd[r]{target}s centered around finitely many points with non-overlapping orbits, we give two examples, Examples~\ref{ex det 4} and \ref{ex det 5}, describing the complicated behavior that occurs when the \jaadd[r]{target}s are centered around points \cgtadd[r]{on the same } orbit. These examples are related to clustering of extreme events created by multiple correlated maxima, investigated in \cite{AzevedoFFR_2016, CorreiaFF22}. 

\cgtadd[r]{We will say that $x_0\in I$ is \textit{simple} if it is a point of continuity for $T$ and if its forward orbit is disjoint from the discontinuity set of $T$. The following examples deal with targets centered at simple points. For instance, if $T$ is smooth -- not only piecewise smooth-- then every $x_0\in I$ is simple. For an analysis of extreme value laws associated to targets centered at non-simple points, we refer the reader to \cite{AFV15}.}

We begin with the simplest example of \gfadd[r]{targets } centered around an aperiodic point. 
\begin{example}\label{ex det 1}

\gfadd[r]{Suppose that $x_0\in H_n\sub I$, $n\ge 1$.}  
If $x_0$ is \cgtadd[r]{simple and } aperiodic, then using standard arguments with \eqref{cuc}, it is easy to show that \gfadd[r]{the $\beta_k(\ell)$ are zero for all $k\ge 0$, $0\le\ell\le k$, and thus, the $q_k(s)$ are zero for all $k\ge 0$ and $s\in\mathbb{R}$.}  
 In this case we have that $\ta\equiv 1$, and so, using (\ref{eq det scaling}),  the CF $\vp(s)=e^{-\theta (s) (1-e^{is}) t}$, which is the CF for a standard Poisson RV.  
\end{example}

\begin{example}\label{ex det 2}
\jaadd[r]{Now suppose that $x_0\in I$ is a simple periodic point of prime period $r\geq 1$, and that the \gfadd[r]{targets } $H_n$ are centered around the point $x_0$. Further suppose that the density $h$ and $DT^r$ are both continuous at $x_0$. }
Again using standard arguments with \eqref{cuc}, we see that all the $q_k=0$ for $k\not\equiv r-1 \pmod r$ and that for $k=br-1$, $b\geq 1$, we have 
$$
q_{br-1}(s)=\lim_{n\rightarrow \infty}\frac{(1-e^{is})}{\mu(H_n)}\int_{H_n\cap  T^{-br}(H_n)}e^{(b-1)is}h(x) dx=(1-e^{is})e^{(b-1)is}\al^b,
$$
since 
$$
\lim_{n\rightarrow \infty}\frac{\mu(H_n\cap  T^{-br}(H_n))}{\mu(H_n)}=\al^b,
$$
where $\al=1/|DT^r(x_0)|<1$. In this case $\theta$ becomes
\begin{align*}
    \theta(s)&=1-\sum_{b=1}^\infty q_{br-1}(s)
    =1-(1-e^{is})\sum_{b=1}^\infty e^{(b-1)is}\al^b
    =1-\frac{1-e^{is}}{\al^{-1}-e^{is}}
    =\frac{1-\al}{1-\al e^{is}}.
\end{align*}
Thus, using (\ref{eq det scaling}), the CF $\vp(s)$ is given by 
$$
\vp(s) = 
e^{-t(1-e^{is})\theta (s)  }
=
e^{-t(1-e^{is})(\frac{1-\al}{1-\al e^{is}})}
=
e^{-t(1-\al)(\frac{1-e^{is}}{1-\al e^{is}})},
$$
which is the CF of a P\'olya-Aeppli distributed random variable $Z$ with parameters $\rho=\al\in (0,1)$ and $\vta = t(1-\al)$. The probability mass distribution of $Z$ (see \cite[Section 9.6]{univ})is given by 
 \begin{equation*}
 \mathbb{P}(Z=k)=
 \begin{cases} 
    e^{-\vta} & \text{ for } k=0,
    \\
    e^{-\vta}\sum_{j=1}^k\frac{\vta^j}{j!} \rho^{k-j}(1-\rho)^j\binom{k-1}{j-1} & \text{ for } k\geq 1.
    \end{cases}
\end{equation*}
\end{example}

We now present several examples describing the return times distributions one can obtain in the deterministic setting with \gfadd[r]{targets } with multiple connected components. 

\begin{example}\label{ex det 3}
Suppose $x_1,\dots,x_M\in I$ \jaadd[r]{are simple and points of continuity for $h$, all of which} belong to different orbits, so that $T^k(x_i)\neq x_j$ for any $i,j,k\in\NN$ with $i\neq j$. Suppose that $H_n^{(j)}$ is an interval centered around the points $x_j$ 
such that for all $n$ sufficiently large we have $H_n^{(i)}\cap H_n^{(j)}=\emptyset$ 
for all $1\leq i\neq j\leq M$.  Set $H_n:=\cup_{j=1}^M H_n^{(j)}$ and we assume the following limit exists:
\begin{align*}
    p_j:=\lim_{n\to\infty}\frac{\mu(H_n^{(j)})}{\mu(H_n)}.
\end{align*}
Since we can write
\begin{align*}
    H_n\cap T^{-k}(H_n)
    =
    \bigcup_{j=1}^M \lt(H_n^{(j)}\cap T^{-k}(H_n^{(j)})\rt),
\end{align*}
we can decompose the integral 
\begin{align*}
    \int_{H_n\cap T^{-k}(H_n)} f(y) dy
    =
    \sum_{j=1}^M \int_{H_n^{(j)}\cap T^{-k}(H_n^{(j)})} f(y) dy.
\end{align*}
Thus, we denote
\begin{align*}
    I_{k,j}(s):=\lim_{n\to\infty}\frac{1-e^{is}}{\mu(H_n)}\int_{H_n^{(j)}\cap T^{-k}(H_n^{(j)})}e^{is S_{n,k}(y)}h(y)\, dy.
\end{align*}
which implies that $q_k(s)=\sum_{j=1}^M I_{k,j}(s).$
Suppose that $x_j$ is periodic with prime period $r_j$ \jaadd[r]{and that $DT^{r_j}$ is continuous at $x_j$} for $1\leq j\leq m$ (for $m\leq M$).  For each $1\leq j\leq m$ denote
$$
\al_j=1/|DT^{r_j}(x_j)|.
$$
For $m+1\leq j\leq M$ suppose that $x_j$ is aperiodic. Applying the same arguments from both examples above we obtain 
\begin{align*}
    I_{k,j}(s)=
    \begin{cases}
        (1-e^{is})\lt(e^{is\lt(b-1\rt)}p_j\al_j^b\rt)
        & \text{ if $x_j$ is periodic and }k=b r_j-1, \quad b\geq 1,
        \\
        0 & \text{ otherwise.}
    \end{cases}
\end{align*}
We set
\begin{align*}
    \kp_j(s) := \sum_{k=0}^\infty I_{k,j}(s),
\end{align*}
and thus, if $x_j$ is aperiodic ($m+1\leq j\leq M$), we have 
$\kp_j(s)\equiv 0$, and if $x_j$ is periodic ($1\leq j\leq m$), then following the calculation from the previous examples gives that 
\begin{align*}
    \kp_j(s)= \frac{p_j\al_j(1-e^{is})}{1-\al_je^{is}}.
\end{align*}
In order to calculate the characteristic function $\vp(s)$ for the random variable $Z$ coming from \eqref{as}, we first calculate $\ta(s)$. Collecting together the above calculations we can write 
\begin{align*}
    \ta(s)&=1-\sum_{k=0}^\infty q_k(s)
    =1-\sum_{j=1}^M \sum_{k=0}^\infty I_{k,j}(s)
    =1-\sum_{j=1}^M\kp_j(s)
    =1-\sum_{j=1}^m\frac{p_j\al_j(1-e^{is})}{1-\al_je^{is}}
    \\
    &=\frac{\prod_{k=1}^m(1-\al_ke^{is})
    -
    \sum_{j=1}^m(p_j\al_j(1-e^{is}))\prod_{k\neq j}(1-\al_ke^{is})
    }
    {\prod_{k=1}^m(1-\al_ke^{is})}
    \\
    &=
    \frac{\sum_{j=1}^Mp_j\prod_{k=1}^m(1-\al_ke^{is})
    -
    \sum_{j=1}^m(p_j\al_j(1-e^{is}))\prod_{k\neq j}(1-\al_ke^{is})
    }
    {\prod_{k=1}^m(1-\al_ke^{is})}
    \\
    &=\frac{\sum_{j=1}^m \lt(\lt(p_j(1-\al_je^{is})-p_j\al_j(1-e^{is})\rt)\prod_{k\neq j}(1-\al_ke^{is})\rt)
    +
    \sum_{j=m+1}^Mp_j\prod_{k=1}^m(1-\al_ke^{is}))
    }
    {\prod_{k=1}^m(1-\al_ke^{is})}
    \\
    &=\sum_{j=1}^m\frac{p_j(1-\al_j)}{1-\al_je^{is}}
    +\sum_{j=m+1}^Mp_j.
\end{align*}
Thus the CF $\vp(s)$ is given by 
\begin{align*}
    \vp(s)&=\exp\lt(-t(1-e^{is})\sum_{j=1}^m\frac{p_j(1-\al_j)}{1-\al_je^{is}}\rt)
    \cdot
    \exp\lt(-t(1-e^{is})\sum_{j=m+1}^Mp_j\rt)
    \\
    &=\exp\lt(-t(1-e^{is})\sum_{j=m+1}^Mp_j\rt)
    \cdot
    \prod_{j=1}^m
    \exp\lt(-tp_j(1-\al_j) \frac{1-e^{is}}{1-\al_je^{is}}\rt).
\end{align*}
Thus the random variable $Z$ can be written as a sum of independent random variables 
$Z=W_0+\sum_{j=1}^m W_j$ where $W_0$ is Poisson with parameter $\vta=t\sum_{j=m+1}^Mp_j$ and $W_j$ is P\'olya-Aeppli distributed with parameters $\rho_j=\al_j$ and $\vta_j=tp_j(1-\al_j)$ for each $1\leq j\leq m$.
 
\end{example}

We have thus proved the following theorem classifying the full extent of the return time distributions when the \gfadd[r]{targets } are centered around finitely many \jaadd[r]{simple } points with distinct orbits. 
\begin{theorem}
    For each $1\leq j\leq M$ and $n\in\NN$ suppose  $x_j\in H_n^{(j)}$ \jaadd[r]{is simple } 
    with $H_n=\cup_{j=1}^M H_n^{(j)}$. For $m\leq M$ and $1\leq j\leq m$ suppose the following 
    \begin{enumerate}
        \item $x_j$ is periodic with prime period $r_j$,
        \item \jaadd[r]{$x_j$ is a continuity point of $h$,}
        \item \jaadd[r]{$DT^{r_j}$ is continuous at $x_j$ and} $\al_j=\sfrac{1}{|DT^{r_j}(x_j)|}$,
        \item $H_n^{(j)}$ is centered around $x_j$ for each $n\in\NN$.
    \end{enumerate}   
    Now for $m+1\leq k\leq M$ suppose that $x_k$ is aperiodic. 
    Further suppose that $x_j\neq T^\ell(x_k)$ for any $1\leq j\neq k\leq M$ and any $\ell\in\NN$, and that the following limit exists
    \begin{align*}
        \lim_{n\to\infty}\frac{\mu(H_n^{(j)})}{\mu(H_n)}=p_j.
    \end{align*}
    Then the RV $Z$ defined by the CF $\vp(s)$ given in  \eqref{as} can be written as a sum of independent random variables
    $$
        Z=W_0+\sum_{j=1}^m W_j,
    $$
    where $W_0$ is standard Poisson with parameter $\vta_0=t\sum_{j=m+1}^M p_j$ and $W_j$ is P\'olya-Aeppli distributed with parameters $\rho_j=\al_j$ and $\vta_j=tp_j(1-\al_j)$ for each $1\leq j\leq m$.
\end{theorem}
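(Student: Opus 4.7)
The plan is to follow the computation already carried out in Example \ref{ex det 3} and then package the resulting characteristic function as the CF of an independent sum. The key input is Theorem \ref{thm deterministic CF}, which gives
$$\vp(s) = \exp\bigl(-t(1-e^{is})\ta(s)\bigr),$$
with $\ta(s) = 1 - \sum_{k\ge 0} q_k(s)$ and $q_k(s)$ given by \eqref{cuc}. My task is to evaluate $\ta(s)$ for the present multi-target setup and then split the exponential into a product of characteristic functions of standard Poisson and P\'olya--Aeppli random variables.

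First I would decompose the integral in \eqref{cuc} according to the connected components of $H_n$. Writing
$$H_n\cap T^{-(k+1)}(H_n) = \bigcup_{i,j=1}^M \bigl(H_n^{(i)}\cap T^{-(k+1)}(H_n^{(j)})\bigr),$$
I would argue that the off-diagonal terms ($i\neq j$) contribute zero in the limit. Indeed, for each fixed $k$ and each pair $i\neq j$, the assumption $x_j\ne T^{\ell}(x_i)$ for all $\ell$, combined with simplicity of the $x_i$ and the shrinking of the targets, implies that $H_n^{(i)}\cap T^{-(k+1)}(H_n^{(j)}) = \emptyset$ for $n$ sufficiently large (since $T^{k+1}$ is continuous at $x_i$ and $T^{k+1}(x_i)$ is bounded away from $x_j$). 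Hence only the diagonal terms survive, giving
$$q_k(s) = \sum_{j=1}^M I_{k,j}(s),$$
with $I_{k,j}(s)$ as defined in Example \ref{ex det 3}.

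Next I would evaluate each $I_{k,j}(s)$ using precisely the single-point computations from Examples \ref{ex det 1} and \ref{ex det 2}, but with the extra factor $\mu(H_n^{(j)})/\mu(H_n)\to p_j$ in the front. For aperiodic $x_j$ ($m+1\le j\le M$), simplicity and aperiodicity give $I_{k,j}(s)\equiv 0$. For periodic $x_j$ ($1\le j\le m$) of prime period $r_j$, continuity of $h$ and $DT^{r_j}$ at $x_j$ produce
$$I_{k,j}(s) = \begin{cases} (1-e^{is})\,e^{(b-1)is}\,p_j\al_j^b & k = br_j - 1,\ b\ge 1, \\ 0 & \text{otherwise,}\end{cases}$$
exactly as in Example \ref{ex det 2}, after which summing the geometric series yields
$$\kp_j(s) = \sum_{k\ge 0} I_{k,j}(s) = \frac{p_j\al_j(1-e^{is})}{1-\al_j e^{is}}.$$
Then the algebraic manipulation carried out in Example \ref{ex det 3} (using $\sum_{j=1}^M p_j = 1$) rearranges $1 - \sum_{j=1}^m \kp_j(s)$ into
$$\ta(s) = \sum_{j=1}^m \frac{p_j(1-\al_j)}{1-\al_j e^{is}} + \sum_{j=m+1}^M p_j.$$

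Finally, substituting into $\vp(s)$ gives
$$\vp(s) = \exp\!\Bigl(-t(1-e^{is})\!\!\sum_{j=m+1}^M p_j\Bigr) \cdot \prod_{j=1}^m \exp\!\Bigl(-t p_j(1-\al_j)\tfrac{1-e^{is}}{1-\al_j e^{is}}\Bigr).$$
By Example \ref{ex det 1} the first factor is the CF of a standard Poisson RV $W_0$ with parameter $\vta_0 = t\sum_{j=m+1}^M p_j$, and by Example \ref{ex det 2} each factor in the product is the CF of a P\'olya--Aeppli RV $W_j$ with parameters $\rho_j = \al_j$ and $\vta_j = tp_j(1-\al_j)$. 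Since $\vp$ factors into a product of characteristic functions, the RV $Z$ admits the claimed representation $Z = W_0 + \sum_{j=1}^m W_j$ as a sum of independent variables. The main technical step is the rigorous justification of the vanishing of cross-terms uniformly in $k$ (so that term-by-term passage to the limit under $\sum_k$ is valid); this is guaranteed by the absolute convergence of $\sum_k \sum_\ell \bt_k(\ell)$ noted in \eqref{eq beta sum}, so the argument reduces to the routine pointwise computations above.
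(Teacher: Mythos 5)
Your proof is correct and takes essentially the same route as the paper, which derives this theorem directly from the computation in Example~\ref{ex det 3}; you reproduce that computation and factor $\vp$ into a product of Poisson and P\'olya--Aeppli characteristic functions in exactly the same way, while spelling out a bit more carefully why the off-diagonal sets $H_n^{(i)}\cap T^{-(k+1)}(H_n^{(j)})$, $i\neq j$, become empty for $n$ large. One small slip in your closing remark: no uniformity in $k$ of the cross-term vanishing is needed (nor is it delivered by the absolute convergence of \eqref{eq beta sum}), because each $q_k(s)$ is already defined as the $n\to\infty$ limit for fixed $k$; summability of $\sum_k q_k(s)$ then follows from \eqref{eq beta sum}, which rests on \eqref{C4}/(A3).
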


We now give examples that show the complexity that arises when the \jaadd[r]{target}s $H_n$ are centered around multiple points with overlapping orbits.

\begin{example}\label{ex det 4}
    Suppose $x_1\in I$ is aperiodic. Let $x_2=T^b(x_1)$ for some $b\geq 1$. Suppose \jaadd[r]{that $x_1$ (and thus also $x_2$) is simple, and suppose} $H_n=H_n^{(1)}\cup H_n^{(2)}$ with $H_n^{(1)}\cap H_n^{(2)}=\emptyset$ and $H_n^{(j)}$ is centered around $x_j$ for $j=1,2$ and each $n\in\NN$. 
    Further suppose the following limit exists: 
\begin{align*}
 p_j:=\lim_{n\to\infty}\frac{\mu(H_n^{(j)})}{\mu(H_n)}
\end{align*}
for $j=1,2$, \jaadd[r]{and that $h$ and $DT^b$ are continuous at $x_1$.}  
Then all of the $q_k$ are equal to zero except for $q_{b-1}$ which is given by 
\begin{align}
    q_{b-1}(s)&=\lim_{n\to\infty}\frac{1-e^{is}}{\mu(H_n)}\int_{H_n\cap T^{-b}(H_n)} e^{is[\ind_{H_n}(T(y))+\dots +\ind_{H_n}(T^{b-1}(y))]}h(y)\, dy
    \nonumber\\
    &=\lim_{n\to\infty}\frac{1-e^{is}}{\mu(H_n)}\int_{H_n^{(1)}\cap T^{-b}(H_n^{(2)})} h(y)\, dy
    =(1-e^{is})\min\{p_1, p_2\al\},
    \label{eq det q}
\end{align}
where
$$
    \al=\frac{1}{|DT^b(x_1)|}.
$$
Setting $\Gm=\min\{p_1, p_2\al\}$,\footnote{Note that $\Gm=p_1$ when $H_n^{(1)}\sub T^{-b}(H_n^{(2)})$, and $\Gm=p_2\al$ otherwise.} then $\ta(s)=1-q_{b-1}(s)=1+(e^{is}-1)\Gm$, and so the CF $\vp(s)$ from \eqref{as} is given by 
\begin{align*}
    \vp(s)
    &=
    \exp\lt(t(e^{is}-1)(1+(e^{is}-1)\Gm)\rt).
\end{align*}
Note that $\vp$ is the CF for a RV $Z$ whose distribution is compound Poisson, which is neither standard Poisson nor P\'olya-Aeppli. In view of \eqref{ggg} and \eqref{eq det cf X}, we see that the CF of $X_1$ is given by 
\begin{align*}
    \phi_{X_1}(s)=1+\frac{(e^{is}-1)(1+\Gm(e^{is}-1))}{\Gm},
\end{align*}
where we have that $\ta_0=\Gm$ follows from similar calculations as \eqref{eq det q}; see \cite{AFGTV-TFPF}.

\end{example}

\begin{example}\label{ex det 5}

Suppose $x_1\in I$ is a periodic point with prime period $r>1$. Let $x_2=T^b(x_1)$ for some $1\leq b< r$. Suppose \jaadd[r]{that $x_1$ is simple, and consequently so is $x_2$. Further suppose that $x_1$ and $x_2$ are points of continuity for $h$, and suppose} $H_n=H_n^{(1)}\cup H_n^{(2)}$ with $H_n^{(1)}\cap H_n^{(2)}=\emptyset$ and $H_n^{(j)}$ is centered around $x_j$ for $j=1,2$ and each $n\in\NN$. 
Further suppose the following limit exists: 
\begin{align*}
    p_j:=\lim_{n\to\infty}\frac{\mu(H_n^{(j)})}{\mu(H_n)}
\end{align*}
for $j=1,2$ \jaadd[r]{and that $DT^r$ is continuous at $x_1.$ }
Set 
$$
\al:=\frac{1}{|DT^r(x_1)|}=\frac{1}{|DT^r(x_2)|},
$$
and set 
\begin{align*}
    \gm_1:=\frac{1}{|DT^{r-b}(x_2)|}
    \qquad\text{ and }\qquad
    \gm_2:=\frac{1}{|DT^{b}(x_1)|}. 
\end{align*}
As previously argued, we have 
\begin{align*}
    \lim_{n\to\infty}\frac{\mu\lt(H_n^{(j)}\cap T^{-k}(H_n^{(j)})\rt)}{\mu(H_n)}=p_j\al^{\frac{k}{r}}
\end{align*}
for $j=1,2$ and $k\equiv 0\pmod r$. 
Now if $k=ar+b$ then we have 
\begin{align*}
    \lim_{n\to\infty}\frac{\mu\lt(H_n^{(1)}\cap T^{-k}(H_n^{(2)})\rt)}{\mu(H_n)}
    &=
    \lim_{n\to\infty}\frac{\min\lt\{\mu(H_n^{(1)}), \mu(H_n^{(2)})\frac{1}{|DT^{ar+b}(x_1)|}\rt\}}{\mu(H_n)}
    \\
    &=\min\lt\{p_1, p_2\al^a\cdot\frac{1}{|DT^{b}(x_1)|}\rt\}=:\Gm_{1,2}(a)
\end{align*}
and similarly for $k=ar-b$ we have 
\begin{align*}
    \lim_{n\to\infty}\frac{\mu\lt(H_n^{(2)}\cap T^{-k}(H_n^{(1)})\rt)}{\mu(H_n)}
    &=
    \lim_{n\to\infty}\frac{\min\lt\{\mu(H_n^{(2)}), \mu(H_n^{(1)})\frac{1}{|DT^{ar-b}(x_2)|}\rt\}}{\mu(H_n)}
    \\
    &=\min\lt\{p_2 ,p_1\al^{a-1}\cdot\frac{1}{|DT^{r-b}(x_2)|}\rt\} =:\Gm_{2,1}(a).
\end{align*}
Note that there exist $a_{1,2}, a_{2,1}\geq 1$ such that $\Gm_{1,2}(a)=p_2\al^a\gm_2$ for all $a\geq a_{1,2}$ and $\Gm_{2,1}(a)=p_1\al^{a-1}\gm_1$ for all $a\geq a_{2,1}$.
For each $k,j,i\in\NN$ we set 
\begin{align*}
    I_{k,j,i}(s):=\lim_{n\to\infty}\frac{1-e^{is}}{\mu(H_n)}\int_{H_n^{(j)}\cap T^{-(k+1)}(H_n^{(i)})} e^{is[\ind_{H_n}(T(y))+\dots +\ind_{H_n}(T^{k}(y))]}h(y)\, dy
\end{align*}
and hence for each $k$ we have 
$$
    q_k(s):= I_{k,1,1}(s)+I_{k,2,2}(s)+I_{k,1,2}(s)+I_{k,2,1}(s).
$$
Following the previous arguments, for $k=ar-1$ we have that 
\begin{align*}
    I_{ar-1,j,j}(s)=(1-e^{is})(e^{is(a-1)}p_j\al^a)
\end{align*}
for $j=1,2$, and $I_{ar-1,1,2}(s)=I_{ar-1,2,1}(s)=0$, so 
\begin{align*}
    I_{ar-1}(s)
    &=I_{ar-1,1,1}(s)+I_{ar-1,2,2}(s)
    \\
    &=(1-e^{is})(e^{is(a-1)}(p_1\al^a+p_2\al^a))
    =(1-e^{is})(e^{is(a-1)}\al^a).
\end{align*}
Now for $k=ar+b-1$ we have $I_{ar+b-1,1,1}(s)=I_{ar+b-1,2,2}(s)=I_{ar+b-1,2,1}(s)=0$ and 
\begin{align*}
    I_{ar+b-1}(s) = I_{ar+b-1, 1,2}(s) = (1-e^{is})e^{is(a-1)}\Gm_{1,2}(a)
\end{align*}
and similarly for 
$k=ar-b-1$ we have $I_{ar-b-1,1,1}(s)=I_{ar-b-1,2,2}(s)=I_{ar-b-1,1,2}(s)=0$ and 
\begin{align*}
    I_{ar-b-1}(s) = I_{ar-b-1, 2,1}(s) = (1-e^{is})e^{is(a-1)}\Gm_{2,1}(a).
\end{align*}
Then we can write 
\begin{align}\label{q_k ex 5}
    \sum_{k=0}^\infty q_k(s)
    &=
    \sum_{a=1}^\infty I_{ar-b-1}(s)+I_{ar-1}(s)+I_{ar+b-1}(s).
\end{align}
We now have three cases.
\\

\textbf{Case 1:} 
If $\Gm_{1,2}(a)=p_2\al^a\gm_2$ and $\Gm_{2,1}(a)=p_1\al^{a-1}\gm_1$ for all $a\geq 1$\footnote{Note that this is the case if $p_1=p_2$.}, then using \eqref{q_k ex 5} we have 
\begin{align*}
    \sum_{k=0}^\infty q_k(s)
    &=
    (1-e^{is})\sum_{a=1}^\infty(e^{is(a-1)})(\al^a+p_2\gm_2\al^a+p_1\gm_1\al^{a-1})
    \\
    &=
    (1-e^{is})(p_1\gm_1+\al(p_2\gm_2+1))\sum_{a=1}^\infty(e^{is}\al)^{a-1}
    \\
    &=
    (p_1\gm_1+\al(p_2\gm_2+1))\frac{1-e^{is}}{1-\al e^{is}}.
\end{align*}

\textbf{Case 2:} Suppose that $a_{1,2}>1$ and $\Gm_{2,1}(a)=p_1\al^{a-1}\gm_1$ for all $a\geq 1$.  
Then 
\begin{align*}
    \Gm_{1,2}(a)=
    \begin{cases}
        p_1 & \text{ for all }a< a_{1,2}
        \\
        p_2\al^a\gm_2 & \text{ for all }a\geq a_{1,2}.
    \end{cases}
\end{align*}

Then using \eqref{q_k ex 5} we have
\begin{align*}
    \sum_{k=0}^\infty q_k(s)
    &=
    \sum_{a=1}^{a_{1,2}-1}I_{ar+b-1}(s) + \sum_{a=a_{1,2}}^\infty I_{ar+b-1}(s)+ \sum_{a=1}^\infty I_{ar-b-1}(s)+I_{ar-1}(s).
\end{align*}
The first two sums are given by 
\begin{align*}
    \sum_{a=1}^{a_{1,2}-1}I_{ar+b-1}(s) + \sum_{a=a_{1,2}}^\infty I_{ar+b-1}(s)
    &=
    \sum_{a=1}^{a_{1,2}-1}(1-e^{is})e^{is(a-1)}p_1 + \sum_{a=a_{1,2}}^\infty (1-e^{is})e^{is(a-1)}p_2\al^a\gm_2
    \\
    &=p_1(1-e^{is})\frac{e^{is(a_{1,2}-1)} -1}{e^{is}-1} + (1-e^{is})p_2\gm_2\al\frac{(e^{is}\al)^{a_{1,2}-1}}{1-\al e^{is}}
    \\
    &=p_1(1-e^{is(a_{1,2}-1)})+(1-e^{is})p_2\gm_2\al\frac{(e^{is}\al)^{a_{1,2}-1}}{1-\al e^{is}}, 
\end{align*}
and the third of the three sums is given by 
\begin{align*}
    \sum_{a=1}^\infty I_{ar-b-1}(s)+I_{ar-1}(s)
    &=
    \sum_{a=1}^\infty (1-e^{is})e^{is(a-1)}(\al^a+p_1\al^{a-1}\gm_1)
    \\
    &=(1-e^{is})\lt((\al+p_1\gm_1)\sum_{a=1}^\infty(e^{is}\al)^{a-1}\rt)
    \\
    &=(\al+p_1\gm_1)\frac{1-e^{is}}{1-\al e^{is}}.
\end{align*}
Combining these sums gives 
\begin{align*}
    \sum_{k=0}^\infty q_k(s) 
    =
    p_1(1-e^{is(a_{1,2}-1)})+(1-e^{is})p_2\gm_2\al\frac{(e^{is}\al)^{a_{1,2}-1}}{1-\al e^{is}}
    +
    (\al+p_1\gm_1)\frac{1-e^{is}}{1-\al e^{is}}.
\end{align*}

\textbf{Case 3:} Suppose that $\Gm_{1,2}(a)=p_2\al^a\gm_2$ for all $a\geq 1$ and $a_{2,1}>1$.  
Then 
\begin{align*}
    \Gm_{2,1}(a)=
    \begin{cases}
        p_2 & \text{ for all }a< a_{2,1}
        \\
        p_1\al^{a-1}\gm_1 & \text{ for all }a\geq a_{2,1}.
    \end{cases}
\end{align*}
Arguing similarly as in Case 2, we have 
\begin{align*}
    \sum_{k=0}^\infty q_k(s) 
    =
    p_2(1-e^{is(a_{2,1}-1)})+(1-e^{is})p_1\gm_1\al\frac{(e^{is}\al)^{a_{2,1}-1}}{1-\al e^{is}}
    +
    \al(1+p_2\gm_2)\frac{1-e^{is}}{1-\al e^{is}}.
\end{align*}

Note that we cannot have a fourth case where both $a_{1,2}>1$ and $a_{2,1}>1$.  
If this were to occur, then we would have that 
\begin{align*}
    \Gm_{1,2}(a)=
    \begin{cases}
        p_1 & \text{ for all }a< a_{1,2}
        \\
        p_2\al^a\gm_2 & \text{ for all }a\geq a_{1,2}
    \end{cases}
\end{align*}
and
\begin{align*}
    \Gm_{2,1}(a)=
    \begin{cases}
        p_2 & \text{ for all }a< a_{2,1}
        \\
        p_1\al^{a-1}\gm_1 & \text{ for all }a\geq a_{2,1}.
    \end{cases}
\end{align*}
In particular, the definition of $\Gm_{1,2}$ and $\Gm_{2,1}$ for $a=1 < a_{1,2},a_{2,1}$ imply that 
\begin{align*}
    p_1\leq p_2\al\gm_2
    \qquad\text{ and }\qquad 
    p_2\leq p_1\gm_1.
\end{align*}
Taken together, and noting that $\al,\gm_1,\gm_2\in(0,1)$, we arrive at the contradiction that 
\begin{align*}
    p_2\leq p_1\gm_1\leq p_2\al\gm_1\gm_2.
\end{align*}

In each of the four cases we then have the characteristic function $\vp(s)$ is given by
$$
    \vp(s)=\exp\lt(-t(1-e^{is})\lt(1-\sum_{k=0}^\infty q_k(s)\rt)\rt),
$$
which is clearly not the CF for a standard Poisson or P\'olya-Aeppli distribution. 

\end{example}

\section{The Random Setting: Main Results}\label{sec: random setting}
In this section we now move to the setting of random dynamical systems where we will present our main result. 
To begin, let $(\Om,\sF,m)$ be a probability space and $\sg:\Om\to\Om$ an ergodic, invertible map  which preserves the measure $m$, i.e.	
\begin{align*}
		m\circ\sg^{-1}=m.
	\end{align*}
	For each $\om\in \Om$, we take $\cJ_\om$ to be a closed subset of a complete metrizable space $M$ such that the map
	\begin{align*}
		\Om\ni \om\longmapsto \cJ_\om
	\end{align*}
	is a closed random set, i.e. $\cJ_\om\sub M$ is closed for each $\om\in\Om$ and the map $\om\mapsto \cJ_\om$ is measurable  
	(see \cite{crauel_random_2002}), and we consider the maps
	\begin{align*}
		T_\om:\cJ_\om\to \cJ_{\sg\om}.
	\end{align*}
	By $T_\om^n:\cJ_\om\to \cJ_{\sg^n\om}$ we mean the $n$-fold composition 
	\begin{align*}
		T_\om^n:=T_{\sg^n\om}\circ\dots\circ T_\om:\cJ_\om\to \cJ_{\sg^n\om}.
	\end{align*}
	Given a set $A\sub \cJ_{\sg^n\om}$ we let
	\begin{align*}
		T_\om^{-n}(A):=\set{x\in \cJ_\om:T_\om^n(x)\in A}
	\end{align*}
	denote the inverse image of $A$ under the map $T_\om^n$ for each $\om\in\Om$ and $n\geq 1$.
	Now let
	\begin{align*}
		\cJ:=\bigcup_{\om\in\Om}\set{\om}\times \cJ_\om\sub \Om\times M,
	\end{align*}
	and define the induced skew-product map $T:\cJ\to \cJ$ by
	\begin{align*}
		T(\om,x)=(\sg\om,T_\om(x)).
	\end{align*}
Let $\sB$ denote the Borel $\sg$-algebra of $M$ and let $\sF\otimes\sB$ be the product $\sg$-algebra on $\Om\times M$. Throughout the text we denote Lebesgue measure by $\Leb$. We suppose the following:
	
	\,
	
	\begin{enumerate}[align=left,leftmargin=*,labelsep=\parindent]
		\item[\mylabel{M1}{M1}] The map $T:\cJ\to \cJ$ is measurable with respect to $\sF\otimes\sB$.
	\end{enumerate} 
	
	\,

	\begin{definition}\label{def: random prob measures}	
		A measure $\mu$ on $\Om\times M$ with respect to the product $\sg$-algebra $\sF\otimes\sB$ is said to be \textit{random measure} relative to $m$ if it has marginal $m$, i.e. if
		$$
		\mu\circ\pi^{-1}_1=m.
		$$ 
		The disintegrations $\set{\mu_\om}_{\om\in\Om}$ of $\mu$ with respect to the partition $\lt(\{\om\}\times M\rt)_{\om\in\Om}$ satisfy the following properties:
		\begin{enumerate}
			\item For every $B\in\sB$, the map $\Om\ni\om\longmapsto\mu_\om(B)\in [0,\infty]$ is measurable, 
			\item For $m$-a.e. $\om\in\Om$, the map $\sB\ni B\longmapsto\mu_\om(B)\in [0,\infty]$ is a Borel measure.
		\end{enumerate}
		We say that the random measure $\mu=\set{\mu_\om}_{\om\in\Om}$ is a \textit{random probability measure} if for $m$-a.e. $\om\in\Om$ the fiber measure $\mu_\om$ is a probability measure. Given a set $Z=\cup_{\om\in\Om}\set{\om}\times Z_\om\sub\Om\times M$, we say that the random measure $\mu=\set{\mu_\om}_{\om\in\Om}$ is supported in $Z$ if $\supp(\mu)\sub Z$ and consequently $\supp(\mu_\om)\sub Z_\om$ for $m$-a.e. $\om\in\Om$. We let $\cP_\Om(Z)$ denote the set of all random probability measures supported in $Z$. We will frequently denote a random measure $\mu$ by $\set{\mu_\om}_{\om\in\Om}$.
	\end{definition}
	The following proposition from Crauel \cite{crauel_random_2002}, shows that a random probability measure $\set{\mu_\om}_{\om\in\Om}$ on $\cJ$ uniquely identifies a probability measure on $\cJ$.
	\begin{proposition}[\cite{crauel_random_2002}, Propositions 3.3]\label{prop: random measure equiv}
		If $\set{\mu_\om}_{\om\in\Om}\in\cP_\Om(\cJ)$ is a random probability measure on $\cJ$, then for every bounded measurable function $f:\cJ\to\RR$, the function 
		$$
		\Om\ni\om\longmapsto \int_{\cJ_\om} f(\om,x) \, d\mu_\om(x)
		$$ 
		is measurable and 
		$$
		\sF\otimes\sB\ni A\longmapsto\int_\Om \int_{\cJ_\om} \ind_A(\om,x) \, d\mu_\om(x)\, dm(\om)
		$$
		defines a probability measure on $\cJ$.
	\end{proposition}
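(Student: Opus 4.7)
The plan is to use the standard measure-theoretic extension procedure (monotone class / Dynkin $\pi$--$\lambda$ theorem) to lift the fibrewise measurability of $\om\mapsto \mu_\om(B)$ for $B\in\sB$ to joint measurability of $\om\mapsto \int_{\cJ_\om} f(\om,x)\, d\mu_\om(x)$ for every bounded measurable $f$ on $\cJ$. Once this is in hand, the second assertion that the displayed formula defines a probability measure will follow from two applications of monotone convergence.

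First I would verify the measurability claim when $f=\ind_{A\times B}$ is the indicator of a measurable rectangle with $A\in\sF$ and $B\in\sB$: in that case the fibre integral equals $\ind_A(\om)\mu_\om(B)$, which is measurable by property (1) of Definition~\ref{def: random prob measures}. Next I would introduce
$$
\sD := \set{C\in\sF\otimes\sB \,:\, \om\longmapsto \mu_\om(C_\om) \text{ is measurable}}, \qquad C_\om := \set{x\in M : (\om,x)\in C},
$$
and check that $\sD$ is a Dynkin system: it contains $\Om\times M$ (since $\mu_\om$ is a probability measure on $\cJ_\om$ for $m$-a.e.\ $\om$), it is closed under proper differences (because the fibre measures are finite), and it is closed under countable increasing unions by monotone convergence on each fibre. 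Since $\sD$ contains the $\pi$-system of measurable rectangles, the $\pi$--$\lambda$ theorem gives $\sD=\sF\otimes\sB$, so the measurability claim holds for all indicators. The usual extension then promotes this to simple functions (by linearity of the fibre integral), to nonnegative measurable $f$ (by monotone convergence), and finally to bounded measurable $f$ via $f=f^+-f^-$.

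For the second part, set $\mu(A):=\int_\Om \mu_\om(A_\om)\, dm(\om)$, which is well defined and takes values in $[0,1]$ by the first part. Nonnegativity is clear, $\mu(\Om\times M)=\int_\Om \mu_\om(\cJ_\om)\, dm(\om)=1$ since each $\mu_\om$ is a probability measure on $\cJ_\om$ and $\supp(\mu_\om)\sub\cJ_\om$, and countable additivity along a disjoint sequence $(A_n)\sub\sF\otimes\sB$ follows by first applying monotone convergence on each fibre (to $\ind_{\bigcup_n A_n}=\sum_n\ind_{A_n}$) and then on $\Om$ against $m$. Hence $\mu$ is a probability measure on $(\cJ,\sF\otimes\sB)$.

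The only genuine step requiring care is the $\pi$--$\lambda$ argument, and the sole potential obstacle is justifying closure of $\sD$ under proper differences and increasing unions; both reduce to the fact that the fibre measures $\mu_\om$ are finite (indeed probability) and the pointwise-in-$\om$ measurability supplied by Definition~\ref{def: random prob measures}. Everything else is an invocation of the standard measure-theoretic machine, so no analytic difficulties arise beyond this.
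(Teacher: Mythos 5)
The paper does not supply a proof of this proposition: it is stated with a direct citation to Crauel's monograph (Proposition~3.3 there), so there is no in-paper argument for you to match. Your proof is the standard one, and it is correct. Verifying measurability on rectangles $A\times B$, noting that their fibre integrals reduce to $\ind_A(\om)\mu_\om(B)$, passing to all of $\sF\otimes\sB$ via a Dynkin/$\pi$--$\lambda$ argument (using that the $\mu_\om$ are finite, $m$-a.e., for closure under proper differences, and monotone convergence for increasing unions), and then extending along simple, nonnegative, and finally bounded measurable functions is exactly the usual disintegration-to-measure construction; the second part is then two applications of monotone convergence. One small point worth making explicit in a final write-up: the definition only asserts that $\mu_\om$ is a (probability) measure for $m$-a.e.\ $\om$, so the fibre integral should be defined arbitrarily (say, as $0$) on the exceptional null set; measurability and the integral formula are unaffected, and this is what licenses the use of finiteness and countable additivity in the Dynkin-system step.
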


	For $f:\cJ\to\RR$ we let 
	\begin{align*}
		S_n(f_\om):=\sum_{j=0}^{n-1}f_{\sg^j(\om)}\circ T_\om^j
	\end{align*}
	denote the Birkhoff sum of $f$ with respect to $T$. 
	We will consider a potential of the form $\vp_0:\cJ\to\RR$, and for each $n\geq 1$ we consider the weight $g_0^{(n)}:\cJ\to\RR$ whose disintegrations are given by
	\begin{align}\label{def: formula for g_om^n}
		g_{\om,0}^{(n)}:=\exp(S_n(\vp_{\om,0})) =\prod_{j=0}^{n-1}g_{\sg^j\om,0}^{(1)}\circ T_\om^j
	\end{align}
	for each $\om\in\Om$. We will often denote $g_{\om,0}^{(1)}$ by $g_{\om,0}$. We assume there exists a family of Banach spaces $\set{\cB_\om,\norm{\spot}_{\cB_\om}}_{\om\in\Om}$ of complex-valued functions on each $\cJ_\om$ with $g_{\om,0}\in\cB_\om$ such that the fiberwise (Perron-Frobenius) transfer operator $\cL_{\om,0}:\cB_\om\to\cB_{\sg\om}$ given by
	\begin{align}\label{glob def: closed tr op}
		\cL_{\om,0}(f)(x):=\sum_{y\in T_\om^{-1}(x)}f(y)g_{\om,0}(y), \quad f\in\cB_\om, \, x\in \cJ_{\sg\om}
	\end{align}
	is well defined.
	Using induction we see that iterates $\cL_{\om,0}^n:\cB_{\om}\to\cB_{\sg^n\om}$ of the transfer operator are given by
	\begin{align*}
		\cL_{\om,0}^n(f)(x):=\sum_{y\in T_\om^{-n}(x)}f(y)g_{\om,0}^{(n)}(y), \quad f\in\cB_\om, \, x\in \cJ_{\sg^n\om}.
	\end{align*}
	We let $\cB$ denote the space of functions $f:\cJ\to\RR$ such that $f_\om\in\cB_\om$ for each $\om\in\Om$ and we define the global transfer operator $\cL_0:
	\cB\to \cB$ by 
	$$
	    (\cL_0 f)_\om(x):=\cL_{\sg^{-1}\om,0}f_{\sg^{-1}\om}(x)
	$$ 
	for $f\in\cB$ and $x\in \cJ_\om$.
	We assume the following measurability assumption:
	
	\,
	
	\begin{enumerate}[align=left,leftmargin=*,labelsep=\parindent]
	
		\item[\mylabel{M2}{M2}] For every measurable function $f \in \cB$, the map 
		$(\om, x) \mapsto (\cL_0 f)_\om(x)$ is measurable.
		
	\end{enumerate}
	
	\,
	
	We suppose the following condition on the existence of a closed conformal measure. 

	\,
	
	\begin{enumerate}[align=left,leftmargin=*,labelsep=\parindent]
		\item[\mylabel{CCM}{CCM}] There exists a random probability  measure $\nu_0=\set{\nu_{\om,0}}_{\om\in\Om}\in \cP_\Om(\cJ)$ and measurable functions $\lm_0:\Om \to\RR\bs\set{0}$ and $\phi_0:\cJ\to (0,\infty)$ with $\phi_0\in\cB$ such that 
		\begin{align*}
			\cL_{\om,0}(\phi_{\om,0})=\lm_{\om,0}\phi_{\sg\om,0}
			\quad\text{ and }\quad
			\nu_{\sg\om,0}(\cL_{\om,0}(f))=\lm_{\om,0}\nu_{\om,0}(f)
		\end{align*}
		for all $f\in\cB_\om$ where $\phi_{\om,0}(\spot):=\phi_0(\om,\spot)$. Furthermore, we suppose that the fiber measures $\nu_{\om,0}$ are non-atomic and that $\lm_{\om,0}:=\nu_{\sg\om,0}(\cL_{\om,0}\ind)$ with $\log\lm_{\om,0}\in L^1(m)$. 
		We then define the random probability measure
		$\mu_0$ on $\cJ$ by
		\begin{align}\label{eq: def of mu_om,0}
			\mu_{\om,0}(f):=\int_{\cJ_\om} f\phi_{\om,0} \ d\nu_{\om,0},  \qquad f\in L^1(\nu_{\om,0}).
		\end{align}
	\end{enumerate}
	
	\,
	
	From the definition, one can easily show that $\mu_0$ is $T$-invariant, that is,  
	\begin{align}\label{eq: mu_om,0 T invar}
		\int_{\cJ_\om} f\circ T_\om \ d\mu_{\om,0}
		=
		\int_{\cJ_{\sg\om}} f \ d\mu_{\sg\om,0}, \qquad f\in L^1(\mu_{\sg\om,0}).
	\end{align}
	\begin{remark}
		Our Assumption \eqref{CCM} has been shown to hold in several random settings: random interval maps \cite{AFGTV20,AFGTV-IVC,AFGTV-TFPF}, random subshifts \cite{Bogenschutz_RuelleTransferOperator_1995a,mayer_countable_2015}, random distance expanding maps \cite{mayer_distance_2011}, random polynomial systems \cite{Bruck_Generalizediteration_2003}, and random transcendental maps \cite{mayer_random_2018}.
	\end{remark}

\subsection{Random Perturbations}
For each $n\in\NN$ we let $H_n\sub \cJ$ be measurable with respect to the product $\sg$-algebra $\sF\otimes\sB$ on $\cJ$ such that 
	\begin{enumerate}[align=left,leftmargin=*,labelsep=\parindent]
		
		\item[\mylabel{A}{A}]
		$H_n'\sub H_n$ for each $n'\leq n$.
	\end{enumerate}
	
	\,
	
	Then the sets $H_{\om,n}\sub \cJ_\om$ are uniquely determined by the condition that 
	\begin{align*}
		\set{\om}\times H_{\om,n}=H_n\cap\lt(\set{\om}\times \cJ_\om\rt),
	\end{align*}
	or equivalently that 
	\begin{align*}
		H_{\om,n}=\pi_2(H_n\cap(\set{\om}\times \cJ_\om)),
	\end{align*}
	where $\pi_2:\cJ\to \cJ_\om$ is the projection onto the second coordinate. The sets $H_{\om,n}$ are then $\nu_{\om,0}$-measurable, and \eqref{A} implies that 
	\begin{enumerate}[align=left,leftmargin=*,labelsep=\parindent]
		\item[\mylabel{A'}{A'}]$H_{\om,n'}\sub H_{\om,n}$ for each $n'\leq n$ and each $\om\in\Om$.
	\end{enumerate}

 For each $\oio$, each $s\in\RR$, and each $\nin$ we define the perturbed operator $\cL_{\om,n,s}:\cB_\om\to\cB_{\sg\om}$
 \begin{align*}
     \cL_{\om,n,s}(f):=\cL_{\om,0}(f\cdot e^{is\ind_{H_{\om,n}}}).
 \end{align*}
Note that if $s=0$ then $\cL_{\om,n,0}=\cL_{\om,0}$ for each $n\geq 1$. 
If we denote
$$
S_{\om,n,k}(x):=\sum_{j=0}^{k-1}{\ind}_{H_{\sg^j\om,n}}(T_\om^jx),
$$
then we have that
\begin{align}\label{eq pert iterates}
\cL_{\om,n,s}^k(f)=\cL_{\om,0}^n(e^{isS_{\om,n,k}}f).
\end{align}
Additionally we make the following assumption on our Banach spaces $\cB_\om$
	\begin{enumerate}[align=left,leftmargin=*,labelsep=\parindent]
		\item[\mylabel{B}{B}]
		$\ind_{H_{\om,n}}\in \cB_\om$ for each $n$ and each $\oio$, and there exists $C>0$ such that for $m$-a.e. $\oio$ we have $\|\ind_{H_{\om,n}}\|_{\cB_\om}\leq C$.
	\end{enumerate}
With a view toward applying Theorem 2.1.2 in \cite{AFGTV-TFPF} we calculate the following quantities

\begin{align}
	\Dl_{\om,n}(s)&:=\nu_{\sg\om,0}\left((\cL_{\om,0}-\cL_{\om,n,s})(\phi_{\om,0})\right)
	=\nu_{\sg\om,0}\left(\cL_{\om,0}(\phi_{\om,0}(1-e^{is\ind_{H_{\om,n}}}))\right)
	\nonumber\\
	&=\lm_{\om,0}\cdot\nu_{\om,0}(\phi_{\om,0}(1-e^{is\ind_{H_{\om,n}}}))
	=\lm_{\om,0}\cdot\mu_{\om,0}(1-e^{is\ind_{H_{\om,n}}})
 \nonumber\\
    &=\lm_{\om,0}(1-e^{is})\mu_{\om,0}(H_{\om,n}),
	\label{eq: Dl = mu H}
\end{align}
and
\begin{align}
	\eta_{\om,n}(s):&=\norm{\nu_{\sg\om,0}\left(\cL_{\om,0}-\cL_{\om,n,s}\right)}_{\cB_\om}
	=\sup_{\norm{\psi}_{\cB_\om}\leq 1}\nu_{\sg\om,0}\left(\cL_{\om,0}(\psi(1-e^{is\ind_{H_{\om,n}}}))\right)
	\nonumber
	\\
	&=\lm_{\om,0}\cdot\sup_{\norm{\psi}_{\cB_\om}\leq 1}\nu_{\om,0}\left(\psi(1-e^{is\ind_{H_{\om,n}}})\right).
	\label{etaineq}
\end{align}
\begin{remark}\label{rem check C6}
    If there exists some measurable, finite $m$-a.e. function $K:\Om\to(0,\infty]$ such that $\|\spot\|_{L^1(\nu_{\om,0})}\leq K_\om\|\spot\|_{\cB_\om}$, then the final equality of \eqref{etaineq} can be bounded by the following:
    \begin{align}
        \eta_{\om,n}(s)
        \leq
        K_\om\lm_{\om,0}|1-e^{is}|\nu_{\om,0}(H_{\om,n}).
        \label{etaineq2}
    \end{align}
    Note that the right hand side of \eqref{etaineq2} approaches zero as $n\to\infty$ if $\nu_{\om,0}(H_{\om,n})\to 0$ as $n\to\infty$. Furthermore, if $\cB_\om$ is the space of bounded variation functions, we may take $K_\om\equiv 2.$
\end{remark}

For each $\oio$,  $\nin$,  $s\in\RR$, and  $k\in\NN$, we define the quantities $q_{\om,n}^{(k)}(s)$ by the following formula:
\begin{align*}
	&q_{\om,n}^{(k)}(s)
	:=\frac
	{\nu_{\sg\om,0}\left((\cL_{\om,0}-\cL_{\om,n,s})(\cL_{\sg^{-k}\om,n,s}^k)(\cL_{\sg^{-(k+1)}\om,0}-\cL_{\sg^{-(k+1)}\om,n,s})(\phi_{\sg^{-(k+1)}\om,0})\right)}
	{\Dl_{\om,n}(s)}
	\\
	&=\frac{\lm_{\sg^{-(k+1)}\om,0}^{k+2}\cdot\mu_{\sg^{-(k+1)}\om,0}
        \left(
        \left(1-e^{is\ind_{H_{\om,n}}\circ T_{\sg^{-(k+1)}\om}^{k+1} }\right)
        \left(e^{is S_{\sg^{-k}\om,n,k}\circ T_{\sg^{-(k+1)}\om}}\right)
        \left(1-e^{is\ind_{H_{\sg^{-(k+1)}\om,n}}}\right)
		\right)
	}
	{\lm_{\om,0}(1-e^{is})\mu_{\om,0}(H_{\om,n})}
	\\
	&=\frac{\lm_{\sg^{-(k+1)}\om,0}^{k+1}\cdot\mu_{\sg^{-(k+1)}\om,0}
        \left(
\left(1-e^{is\ind_{H_{\om,n}}\circ T_{\sg^{-(k+1)}\om}^{k+1} }\right)
        \left(e^{is S_{\sg^{-k}\om,n,k}\circ T_{\sg^{-(k+1)}\om}}\right)
        \left(1-e^{is\ind_{H_{\sg^{-(k+1)}\om,n}}}\right)
		\right)
	}
	{(1-e^{is})\mu_{\om,0}(H_{\om,n})}
 \\
 &=\frac{\lm_{\sg^{-(k+1)}\om,0}^{k+1}(1-e^{is})^2\mu_{\sg^{-(k+1)}\om,0}
        \left(
\left(\ind_{H_{\om,n}}\circ T_{\sg^{-(k+1)}\om}^{k+1} \right)
        \left(e^{is S_{\sg^{-k}\om,n,k}\circ T_{\sg^{-(k+1)}\om}}\right)
        \left(\ind_{H_{\sg^{-(k+1)}\om,n}}\right)
		\right)
	}
	{(1-e^{is})\mu_{\om,0}(H_{\om,n})}
  \\
 &=\frac{\lm_{\sg^{-(k+1)}\om,0}^{k+1}(1-e^{is})\mu_{\sg^{-(k+1)}\om,0}
        \left(
\left(\ind_{T_{\sg^{-(k+1)}\om}^{-(k+1)}(H_{\om,n})} \right)
        \left(e^{is S_{\sg^{-k}\om,n,k}\circ T_{\sg^{-(k+1)}\om}}\right)
        \left(\ind_{H_{\sg^{-(k+1)}\om,n}}\right)
		\right)
	}
	{\mu_{\om,0}(H_{\om,n})}.
\end{align*}
For notational convenience we define the quantity $\hat q_{\om,n}^{(k)}(s)$ by 
\begin{align}\label{def of hat q}
	\hat q_{\om,n}^{(k)}(s):&=
	\frac{(1-e^{is})\mu_{\sg^{-(k+1)}\om,0}
        \left(
\left(\ind_{T_{\sg^{-(k+1)}\om}^{-(k+1)}(H_{\om,n})} \right)
        \left(e^{is S_{\sg^{-k}\om,n,k}\circ T_{\sg^{-(k+1)}\om}}\right)
        \left(\ind_{H_{\sg^{-(k+1)}\om,n}}\right)
		\right)
	}
	{\mu_{\om,0}(H_{\om,n})},
\end{align}
and thus we have that 
\begin{align*}
	\hat q_{\om,n}^{(k)}(s)=\left(\lm_{\sg^{-(k+1)}\om,0}^{k+1}\right)^{-1} q_{\om,n}^{(k)}(s).
\end{align*}

We assume the following conditions hold for each $\nin$ sufficiently large and each $s\in\RR$:
\begin{enumerate}[align=left,leftmargin=*,labelsep=\parindent]
	\item[\mylabel{C1}{C1}]

There exists $C_1\geq 1$ such that for $m$-e.a. $\om\in\Om$ we have 
	\begin{align*}
		C_1^{-1}\leq \cL_{\om,0}\ind\leq C_1.
	\end{align*}
 
	\item[\mylabel{C2}{C2}]  There exist 
    $\nu_{\om,n,s}\in\cB_\om^*$ (the dual of $\cB_\om$)
	and measurable functions $\lm_{n,s}:\Om\to\CC\bs\{0\}$ 
	with $\log|\lm_{\om,n,s}|\in L^1(m)$ and $\phi_{n,s}:\cJ\to \CC$ such that 
	\begin{align*}
		\cL_{\om,n,s}(\phi_{\om,n,s})=\lm_{\om,n,s}\phi_{\sg\om,n,s}
		\quad\text{ and }\quad
		\nu_{\sg\om,n,s}(\cL_{\om,n,s}(f))=\lm_{\om,n,s}\nu_{\om,n,s}(f)
	\end{align*}
	for all $f\in\cB_\om$. Furthermore we assume that for $m$-a.e. $\om\in\Om$
	$$
	\nu_{\om,0}(\phi_{\om,n,s})=1
	\quad\text{ and } \quad
	\nu_{\om,0}(\ind)=1.
	$$
	\item[\mylabel{C3}{C3}] There exist operators $Q_{\om,0}, Q_{\om,n,s}:\cB_\om\to\cB_{\sg\om}$ such that for $m$-a.e. $\om\in\Om$ and each $f\in\cB_\om$ we have
    \begin{align*}
		\lm_{\om,0}^{-1}\cL_{\om,0}(f)=\nu_{\om,0}(f)\cdot\phi_{\sg\om,0}+Q_{\om,0}(f)
	\end{align*}
    and 
	\begin{align*}
		\lm_{\om,n,s}^{-1}\cL_{\om,n,s}(f)=\nu_{\om,n,s}(f)\cdot\phi_{\sg\om,n,s}+Q_{\om,n,s}(f).
	\end{align*}
	Furthermore, for $m$-a.e. $\om\in\Om$ we have
    \begin{align*}
		Q_{\om,0}(\phi_{\om,0})=0
		\quad\text{ and }\quad
		\nu_{\sg\om,0}(Q_{\om,0}(f))=0
	\end{align*}
    and 
	\begin{align*}
		Q_{\om,n,s}(\phi_{\om,n,s})=0
		\quad\text{ and }\quad
		\nu_{\sg\om,n,s}(Q_{\om,n,s}(f))=0.
	\end{align*}
	\item[\mylabel{C4}{C4}] 
	There exists $C>0$ and a summable sequence $\al(N)>0$ (independent of $\om$) with $\al:=\sum_{N=1}^\infty\al(N)<\infty$ such that for each $f\in\cB$, for $m$-a.e. $\om\in\Om$, and each $N\in\NN$
    \begin{align*}
		\norm{Q_{\om,0}^N f_{\om}}_{\infty,\sg^N\om}\leq C\al(N)\norm{f_{\om}}_{\cB_{\om}}.
	\end{align*}
    and
	\begin{align*}
		\sup_{n> 0}\norm{Q_{\om,n,s}^N f_{\om}}_{\infty,\sg^N\om}\leq C\al(N)\norm{f_{\om}}_{\cB_{\om}}.
	\end{align*}

	\item[\mylabel{C5}{C5}] 

 There exists $C_2\geq 1$ such that
	\begin{align*}
		\norm{\phi_{\om,0}}_{\cB_\om}\leq C_2
		\quad\text{ and }\quad
		\sup_{n>0}\norm{\phi_{\om,n,s}}_{\infty,\om}\leq C_2
	\end{align*}
	for $m$-a.e. $\om\in\Om$.

	\item[\mylabel{C6}{C6}] 
	For $m$-a.e. $\om\in\Om$ we have
	\begin{align*}
		\lim_{n\to \infty}\eta_{\om,n}=0. 
	\end{align*}
	
	\item[\mylabel{C7}{C7}]
 	There exists $C_3\geq 1$ such that for all $n>0$ sufficiently large we have
	\begin{align*}
		\essinf_\om\inf\phi_{\om,0}\geq C_3^{-1} >0.
	\end{align*}
	
	\item[\mylabel{C8}{C8}] 
	For $m$-a.e. $\om\in\Om$ 
	we have that the limit $\hat{q}_{\om,0}^{(k)}(s):=\lim_{n\to 0} \hat{q}_{\om,n,s}^{(k)}$ exists for each $k\geq 0$, where $\hat{q}_{\om,n,s}^{(k)}$ is as in \eqref{def of hat q}.

\item[\mylabel{S}{S}]
For any fixed random scaling function $t\in L^\infty(m)$ with $t>0$, we may find a sequence of functions $\xi_n\in L^\infty(m)$  and a constant $W<\infty$ satisfying 
	$$
        \mu_{\omega,0}(H_{\om,n})=(t_\omega+\xi_{\omega,n})/n, \mbox{ for a.e.\ $\omega$ and each $n\ge 1$},
	$$
where:\\
(i)
 $\lim_{n\to\infty} \xi_{\omega,n}=0$ for a.e.\ $\omega$ and \\
 (ii) $|\xi_{\omega,n}|\le W$ for a.e.\ $\omega$ and all $n\ge 1$.

\end{enumerate}
\begin{remark}
    Assumption \eqref{C4} can easily be shown to imply the following decay of correlations condition: 
    \begin{itemize}
        \item For $m$-a.e., every $N\in\NN$, 
        every $f\in L^1(\mu)$ and every $h\in\cB$ we have 
        \begin{align}\label{dec}
            \lt|
			\mu_{\om,0}
			\lt(\lt(f_{\sg^{N}(\om)}\circ T_{\om}^N\rt)h_{\om} \rt)
			-
			\mu_{\sg^{N}(\om),0}(f_{\sg^{N}(\om)})\mu_{\om,0}(h_{\om})
		      \rt|
            \leq C\|f_{\sg^N\om}\|_{L^1(\mu_{\sg^N\om,0})}\|h_\om\|_{\cB_\om}\al(N).
        \end{align}
    \end{itemize}
\end{remark}

\begin{remark}({\em On the scaling \eqref{S} for non-random $t$})\label{otsf}
If $t$ is non-random, there may be situations in which it is easier to check (\ref{S})(i) using natural annealed quantities. 
A scaling that has been proposed in the quenched random setting reads (in our notation) for $n\ge 1:$ 
\begin{equation}\label{nnss}
n\int \mu_{\omega,0}(H_{\omega,n})dm(\omega)=t
\end{equation}
see \cite{RSV014} \svadd[r]{
and \cite{AHV24}}. 

We claim that the scaling (\ref{nnss}) together with an additional {\em annealed} assumption: for any $t>0,$ there exist $\kappa(t)>0$ and  a bounded constant $C(t)$ such that
\begin{equation}\label{2Y}
\int |n\mu_{\omega,0}(H_{\omega,n})-t|dm(\omega)\le \frac{C(t)}{n^{1+\kappa(t)}}.
\end{equation}
imply condition \eqref{S}(i). 
It is sufficient to show, by Borel-Cantelli, that for any $\epsilon>0, \sum_n m(|n\mu_{\omega,0}(H_{\omega,n})-t|>\epsilon)<\infty$. By Tchebychev's inequality we  have 
\begin{eqnarray*}
m(|n\mu_{\omega,0}(H_{\omega,n})-t|>\epsilon)&\le& \frac{1}{\epsilon}\mathbb{E}(|n\mu_{\omega,0}(H_{\omega,n})-t|).
\end{eqnarray*}
Thus (\ref{nnss}) and (\ref{2Y}) imply part (i) of our condition \eqref{S}: 
$$
n  \mu_{\omega,0}(H_{\omega,n})\rightarrow t, \ a.s.
$$
To ensure that part (ii) of \eqref{S} holds, 
it suffices to assume that there exists $K>0$ such that 
\begin{align*}
    \limsup_{n\to\infty}n\mu_{\om,0}(H_{\om,n})\leq K+t.
\end{align*}
Indeed, this immediately implies that 
\begin{align*}
    -K\leq n\mu_{\om,0}(H_{\om,n})-t\leq K
\end{align*}
for all $n$ sufficiently large.

In the other direction, it is immediate to see that condition \eqref{S} (with non-random $t$) implies a limit version of (\ref{nnss}), namely $n\int \mu_{\omega,0}(H_{\omega,n})dm(\omega)\rightarrow t$, as $n\rightarrow \infty.$

\end{remark}
\begin{definition}\label{def rand pert sys}
    We will call the collection $(\mathlist{\bcomma}{\Om, m, \sg, \cJ, T, \cB, \cL_0, \nu_0, \phi_0, H_n})$ a random perturbed system if the assumptions \eqref{M1}, \eqref{M2}, \eqref{CCM}, \eqref{A}, \eqref{B}, \eqref{C1}--\eqref{C7}
    are satisfied. 
\end{definition}
\begin{remark}
    See \cite{AFGTV-TFPF} for examples of systems which satisfy the assumptions of a Definition \ref{def rand pert sys} for perturbations similar to what we consider here. 
\end{remark}
Note that 
	\begin{align}
		\lm_{\om,0}-\lm_{\om,n,s}&=\lm_{\om,0}\nu_{\om,0}(\phi_{\om,n,s})-\nu_{\sg\om,0}(\lm_{\om,n,s}\phi_{\sg\om,n,s})\nonumber\\
		&=\nu_{\sg\om,0}(\cL_{\om,0}(\phi_{\om,n,s}))-\nu_{\sg\om,0}(\cL_{\om,n,s}(\phi_{\om,n,s}))\nonumber\\
		&=\nu_{\sg\om,0}\left((\cL_{\om,0}-\cL_{\om,n,s})(\phi_{\om,n,s})\right).\label{diff eigenvalues identity unif}
	\end{align}
	It then follows from \eqref{etaineq}, \eqref{etaineq2}, and \eqref{diff eigenvalues identity unif} that
    \begin{align}\label{conv eigenvalues unif}
		\absval{\lm_{\om,0}-\lm_{\om,n,s}}\leq C_2\eta_{\om,n}(s).
	\end{align}
Taking \eqref{conv eigenvalues unif} together with the assumption \eqref{C6}, we see that for each $s\in\RR$
\begin{align}\label{conv eigenvalues unif2}
    \lm_{\om,n,s}\to\lm_{\om,0}
\end{align}
as $n\to\infty.$
Using \eqref{CCM}, \eqref{eq pert iterates}, \eqref{C3}, and \eqref{C2}, for $\psi_\om\in\cB_\om$ we can write 
\begin{align*}
    \nu_{\om,0}\lt(\psi_\om e^{is S_{\om,n,k}}\rt)
    &=
    (\lm_{\om,0}^k)^{-1}\nu_{\sg^k\om,0}\lt(\cL_{\om,0}\lt(\psi_\om e^{is S_{\om,n,k}}\rt)\rt)
    =
    (\lm_{\om,0}^k)^{-1}\nu_{\sg^k\om,0}\lt(\cL_{\om,n,s}^k\psi_\om\rt)
    \\
    &=
    \frac{\lm_{\om,n,s}^k}{\lm_{\om,0}^k}
    \lt(\nu_{\om,n,s}(\psi_\om)\nu_{\sg^k\om,0}(\phi_{\sg^k\om,n,s})+\nu_{\sg^k\om,0}(Q_{\om,n,s}^k\psi_\om)\rt)
    \\
    &=
    \frac{\lm_{\om,n,s}^k}{\lm_{\om,0}^k}
    \lt(\nu_{\om,n,s}(\psi_\om)+\nu_{\sg^k\om,0}(Q_{\om,n,s}^k\psi_\om)\rt).
\end{align*}
In particular, if $\psi_\om=\ind$ we have 
\begin{align}\label{eq: nu0 of pert sum}
    \nu_{\om,0}\lt(e^{is S_{\om,n,k}}\rt)
    &=
    \frac{\lm_{\om,n,s}^k}{\lm_{\om,0}^k}
    \lt(\nu_{\om,n,s}(\ind)+\nu_{\sg^k\om,0}(Q_{\om,n,s}^k\ind)\rt),
\end{align}
and if $\psi_\om=\phi_{\om,0}$ we have 
\begin{align}\label{eq: mu0 of pert sum}
    \mu_{\om,0}\lt(e^{is S_{\om,n,k}}\rt)
    &=
    \frac{\lm_{\om,n,s}^k}{\lm_{\om,0}^k}
    \lt(\nu_{\om,n,s}(\phi_{\om,0})+\nu_{\sg^k\om,0}(Q_{\om,n,s}^k\phi_{\om,0})\rt).
\end{align}
In order to apply Theorem 2.1.2 in \cite{AFGTV-TFPF}, we need to check assumptions (P1)-(P9), however as it is clear that our assumptions \eqref{C1}--\eqref{C8} directly imply (P1)-(P6) as well as (P9), we have only to check the assumptions (P7) and (P8) of \cite{AFGTV-TFPF}. This is done in the following lemma. 
\begin{lemma}\label{lem: checking P7 and P8}
	Given a random perturbed system $(\mathlist{\bcomma}{\Om, m, \sg, \cJ, T, \cB, \cL_0, \nu_0, \phi_0, H_n})$, for $m$-a.e. $\oio$ and each $s\in\RR$ we have that 
	\begin{align}\label{eq: lem check P7}
		\lim_{n\to\infty}\nu_{\om,n,s}(\phi_{\om,0})=1,
	\end{align}
	and 
	\begin{align}\label{eq: lem check P8}
		\lim_{k\to\infty}\limsup_{n \to \infty}(\Dl_{\om,n}(s))^{-1} \nu_{\sg\om,0}\lt((\cL_{\om,0}-\cL_{\om,n,s})(Q_{\sg^{-k}\om,n,s}^k\phi_{\sg^{-k}\om,0})\rt)=0.
	\end{align}
    Thus, (P7) and (P8) of \cite{AFGTV-TFPF} follow from \eqref{eq: lem check P7} and \eqref{eq: lem check P8} respectively.
\end{lemma}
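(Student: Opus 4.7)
The plan is to exploit the explicit formula \eqref{eq: mu0 of pert sum}, the eigenvalue convergence \eqref{conv eigenvalues unif2}, the uniform-in-$n$ decay of $Q_{\om,n,s}^k$ from \eqref{C4}, and the lower bound on $\phi_{\om,0}$ in \eqref{C7}. The two claims are essentially independent, so I treat them in turn.

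For \eqref{eq: lem check P7}, rearrange \eqref{eq: mu0 of pert sum} to isolate $\nu_{\om,n,s}(\phi_{\om,0})$:
\begin{align*}
    \nu_{\om,n,s}(\phi_{\om,0}) = \frac{\lm_{\om,0}^k}{\lm_{\om,n,s}^k}\,\mu_{\om,0}\bigl(e^{isS_{\om,n,k}}\bigr) - \nu_{\sg^k\om,0}\bigl(Q_{\om,n,s}^k\phi_{\om,0}\bigr).
\end{align*}
For each fixed $k$, as $n\to\infty$, the eigenvalue ratio tends to $1$ by \eqref{conv eigenvalues unif2}, and $\mu_{\om,0}(e^{isS_{\om,n,k}})\to 1$ because $\mu_{\om,0}(S_{\om,n,k}\geq 1)\leq\sum_{j=0}^{k-1}\mu_{\sg^j\om,0}(H_{\sg^j\om,n})\to 0$ by \eqref{eq: mu_om,0 T invar} and \eqref{S}. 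The second term is bounded in absolute value by $CC_2\al(k)$ uniformly in $n$ via \eqref{C4} and \eqref{C5}. Given $\eps>0$, choose $k$ so large that $CC_2\al(k)<\eps/2$, then $n$ so large (depending on $k$) that the first term is within $\eps/2$ of $1$; this yields \eqref{eq: lem check P7}.

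For \eqref{eq: lem check P8}, set $g_{\om,k,n}:=Q_{\sg^{-k}\om,n,s}^k\phi_{\sg^{-k}\om,0}\in\cB_\om$. Replaying the algebra of \eqref{eq: Dl = mu H} with $g_{\om,k,n}$ in place of $\phi_{\om,0}$ yields
\begin{align*}
    \nu_{\sg\om,0}\bigl((\cL_{\om,0}-\cL_{\om,n,s})(g_{\om,k,n})\bigr)
    = \lm_{\om,0}(1-e^{is})\,\nu_{\om,0}(g_{\om,k,n}\ind_{H_{\om,n}}),
\end{align*}
and dividing by $\Dl_{\om,n}(s)=\lm_{\om,0}(1-e^{is})\mu_{\om,0}(H_{\om,n})$ reduces the target to $\nu_{\om,0}(g_{\om,k,n}\ind_{H_{\om,n}})/\mu_{\om,0}(H_{\om,n})$. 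By \eqref{C4} and \eqref{C5}, $\|g_{\om,k,n}\|_{\infty,\om}\leq CC_2\al(k)$ uniformly in $n$; by \eqref{C7}, $\nu_{\om,0}(H_{\om,n})\leq C_3\mu_{\om,0}(H_{\om,n})$. The ratio is therefore bounded by $C_3CC_2\al(k)$, and \eqref{eq: lem check P8} follows since $\al(k)\to 0$. The only subtle step in either part is exploiting the uniformity of \eqref{C4} in $n$, which is what allows us to send $n\to\infty$ first (while the $Q$-terms are merely bounded) and $k\to\infty$ afterward.
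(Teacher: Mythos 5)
Your proof is correct and follows essentially the same route as the paper's: both parts rearrange \eqref{eq: mu0 of pert sum}, use the uniform-in-$n$ bound from \eqref{C4} and \eqref{C5} on the $Q$-term, and for \eqref{eq: lem check P8} exploit the identity $1-e^{is\ind_{H_{\om,n}}}=(1-e^{is})\ind_{H_{\om,n}}$ together with \eqref{C7}. One small mismatch of hypotheses: to show $\mu_{\om,0}(e^{isS_{\om,n,k}})\to 1$ you invoke \eqref{S}, but \eqref{S} is not part of the definition of a random perturbed system, which only requires \eqref{M1}, \eqref{M2}, \eqref{CCM}, \eqref{A}, \eqref{B}, \eqref{C1}--\eqref{C7}. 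The paper instead cites \eqref{C5} and \eqref{C6}: from \eqref{C6} one extracts $\nu_{\om,0}(H_{\om,n})\to 0$ (testing the functional against a rescaled $\ind$ and using the lower bound on $\lm_{\om,0}$ from \eqref{C1}), and \eqref{C5} then gives $\mu_{\om,0}(H_{\om,n})\leq C_2\,\nu_{\om,0}(H_{\om,n})\to 0$; combined with $T$-invariance this yields $\mu_{\om,0}(S_{\om,n,k}\geq 1)\to 0$ for fixed $k$ exactly as you argue. Your bound $\mu_{\om,0}(S_{\om,n,k}\geq 1)\leq\sum_{j=0}^{k-1}\mu_{\sg^j\om,0}(H_{\sg^j\om,n})$ is a clean and correct step; just replace the appeal to \eqref{S} by this in-scope derivation and the argument matches the paper's.
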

\begin{proof}
	First, using \eqref{eq: Dl = mu H}, we note that if $\mu_{\om,0}(H_{\om,n})>0$ then  $\Dl_{\om,n}(s)\neq 0$.
	To prove \eqref{eq: lem check P7}, we note that for fixed $k\in\NN$ we have 
	\begin{align}\label{eq: lim ratio times measure is 1}
		\lim_{n\to\infty}\frac{\lm_{\om,0}^k}{\lm_{\om,n,s}^k}\mu_{\om,0}\lt(e^{is S_{\om,n,k}}\rt)=1
	\end{align}
	since $\lm_{\om,0}^k/\lm_{\om,n,s}^k\to 1$ (by \eqref{conv eigenvalues unif2}) and since \eqref{C5} and \eqref{C6} together imply that $\mu_{\om,0}\lt(e^{is S_{\om,n,k}}\rt)\to 1$ as $n\to \infty$. 
	Using  \eqref{eq: mu0 of pert sum} we can write 
	\begin{align}\label{eq: solve for nu eps}
		\nu_{\om,n,s}(\phi_{\om,0})
		= \frac{\lm_{\om,0}^k}{\lm_{\om,n,s}^k}\mu_{\om,0}\lt(e^{is S_{\om,n,k}}\rt)
		- 
		\nu_{\sg^k\om,0}(Q_{\om,n,s}^k(\phi_{\om,0})),
	\end{align}
	and thus using \eqref{eq: lim ratio times measure is 1} and \eqref{C4}, 
	for each $\om\in\Om$ and each $k\in\NN$ we can write 
	\begin{align*}
		\lim_{n\to\infty}\absval{1-\nu_{\om,n,s}(\phi_{\om,0})}
		&\leq
		\lim_{n\to\infty}
		\absval{ 1-\frac{\lm_{\om,0}^k}{\lm_{\om,n,s}^k}\mu_{\om,0}\lt(e^{is S_{\om,n,k}}\rt)}
		+
		\norm{Q_{\om,n,s}^k(\phi_{\om,0})}_{\infty,\sg^k\om}
		\\
		&\leq
		C_{\phi_0}\al(k)\norm{\phi_{\om,0}}_{\cB_\om}.
	\end{align*}
	As this holds for each $k\in\NN$ and as the right-hand side of the previous equation goes to zero as $k\to\infty$, we must in fact have that  
	\begin{align*}
		\lim_{n\to\infty}\absval{1-\nu_{\om,n,s}(\phi_{\om,0})}=0,
	\end{align*}
	which yields the first claim.
	
	Now, for the second claim, using \eqref{eq: Dl = mu H}, we note that \eqref{C7} implies 
	\begin{align*}
		&\lt|(\Dl_{\om,n}(s))^{-1} \nu_{\sg\om,0}\lt((\cL_{\om,0}-\cL_{\om,n,s})(Q_{\sg^{-N}\om,\ep}^k\phi_{\sg^{-k}\om,0})\rt)\rt|
        \\
        &\quad=
		\lt|\frac{\nu_{\sg\om,0}\lt(\cL_{\om,0}\lt(\lt(1-e^{is\ind_{H_{\om,n}}}\rt)Q_{\sg^{-k}\om,n,s}^k(\phi_{\sg^{-k}\om,0}) \rt)\rt) }{\lm_{\om,0}(1-e^{is})\mu_{\om,0}(H_{\om,n})}\rt|
		=
		\lt|\frac{\nu_{\om,0}\lt(\lt(1-e^{is\ind_{H_{\om,n}}}\rt)Q_{\sg^{-k}\om,n,s}^k(\phi_{\sg^{-k}\om,0}) \rt) }{(1-e^{is})\mu_{\om,0}(H_{\om,n})}\rt|
		\\
		&\quad=
		\frac{\nu_{\om,0}(H_{\om,n})}{\mu_{\om,0}(H_{\om,n})}\norm{Q_{\sg^{-k}\om,n,s}^k(\phi_{\sg^{-k}\om,0})}_{\infty,\om}
		\leq C_3\norm{Q_{\sg^{-k}\om,n,s}^k(\phi_{\sg^{-k}\om,0})}_{\infty,\om}.
	\end{align*}
	Thus, letting $n\to\infty$ first and then $k\to\infty$, the second claim follows from \eqref{C4}.
\end{proof}

Now, for all $\ell\geq 0$, we define the following: 
\begin{align*}
    \bt_{\om,n}^{(k)}(\ell)
    :=
    \frac{\mu_{\sg^{-(k+1)}\om,0}
    \lt( \lt\{x\in H_{\sg^{-(k+1)}\om,n}: T_{\sg^{-(k+1)}\om}^{k+1}(x)\in H_{\om,n}, \sum_{j=1}^k\ind_{H_{\sg^{-(k+1)+j}\om,n}}(T_{\sg^{-(k+1)}\om}^j(x))=\ell\rt\}\rt)}{\mu_{\om,0}(H_{\om,n})}.
\end{align*}
Thus, in light of \eqref{def of hat q}, we have that 
\begin{align}\label{eq q with beta}
    \hat{q}_{\om,n}^{(k)}(s)
    =
    (1-e^{is})\sum_{\ell=0}^ke^{i\ell s}\bt_{\om,n}^{(k)}(\ell).
\end{align}
Note that 
\begin{align*}
    \sum_{\ell=0}^k\bt_{\om,n}^{(k)}(\ell)
    =
    \frac{\mu_{\sg^{-(k+1)}\om,0}\lt(H_{\sg^{-(k+1)}\om,n}\cap T_{\sg^{-(k+1)}\om}^{-(k+1)}(H_{\om,n})\rt)}{\mu_{\om,0}\lt( H_{\om,n}\rt)}.
\end{align*}
Using the decay of correlations condition \eqref{dec}, 
which follows from our assumption \eqref{C4}, together with the assumption \eqref{B} and the $T$-invariance of $\mu_0$, gives that 
\begin{align}\label{eq: beta bound}
    \sum_{\ell=0}^k\bt_{\om,n}^{(k)}(\ell)
    &\leq 
    \frac{\mu_{\om,0}(H_{\om,n})\mu_{\sg^{-(k+1)}\om,0}\lt(T_{\sg^{-(k+1)}\om}^{-(k+1)}(H_{\om,n})\rt)+C\mu_{\om,0}(H_{\om,n})\|\ind_{H_{\sg^{-(k+1)}\om,n}}\|_{\cB_{\sg^{-(k+1)}\om}}\al(k+1)}{\mu_{\om,0}(H_{\om,n})}
    \nonumber\\
    &=\mu_{\sg^{-(k+1)}\om,0}\lt(T_{\sg^{-(k+1)}\om}^{-(k+1)}(H_{\om,n})\rt)+C\|\ind_{H_{\sg^{-(k+1)}\om,n}}\|_{\cB_{\sg^{-(k+1)}\om}}\al(k+1)
    \nonumber\\
    &=\mu_{\om,0}(H_{\om,n})+C\|\ind_{H_{\sg^{-(k+1)}\om,n}}\|_{\cB_{\sg^{-(k+1)}\om}}\al(k+1)
    \nonumber\\
    &\leq \mu_{\om,0}(H_{\om,n})+C^2\al(k+1).
\end{align}
In view of \eqref{eq q with beta}, we now claim that for each $0\leq \ell\leq k$ the quantities $\bt_{\om,n}^{(k)}(\ell)$ converge as $n\to\infty$.
\begin{lemma}\label{lem beta limits}
    If \eqref{C8} holds, then the limit 
\begin{align*}
    \bt_{\om,0}^{(k)}(\ell)
    :=\lim_{n\to\infty}\bt_{\om,n}^{(k)}(\ell)    
\end{align*}
exists for each $0\leq \ell\leq k$. 
\end{lemma}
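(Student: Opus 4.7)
The plan is to invert the identity \eqref{eq q with beta}, namely
\begin{align*}
\hat{q}_{\om,n}^{(k)}(s) \;=\; (1-e^{is})\sum_{\ell=0}^{k} e^{i\ell s}\,\bt_{\om,n}^{(k)}(\ell),
\end{align*}
by Vandermonde interpolation in the variable $e^{is}$. For each fixed $k\ge 0$, I would choose $k+1$ real nodes $s_0,\dots,s_k$ with $s_j\not\equiv 0\pmod{2\pi}$ (so that $1-e^{is_j}\neq 0$) and with $z_j:=e^{is_j}$ pairwise distinct, for instance $s_j=2\pi(j+1)/(k+2)$.

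Evaluating the identity at each $s_j$ and dividing by $1-e^{is_j}$ produces the linear system
\begin{align*}
\frac{\hat{q}_{\om,n}^{(k)}(s_j)}{1-e^{is_j}} \;=\; \sum_{\ell=0}^{k} z_j^{\ell}\,\bt_{\om,n}^{(k)}(\ell), \qquad j=0,\dots,k.
\end{align*}
Because the nodes $z_0,\dots,z_k$ are distinct, the Vandermonde matrix $V=(z_j^\ell)_{0\le j,\ell\le k}$ is invertible, and the entries $(V^{-1})_{\ell,j}$ depend only on $k$ (not on $\om$ or $n$). Therefore
\begin{align*}
\bt_{\om,n}^{(k)}(\ell) \;=\; \sum_{j=0}^{k} (V^{-1})_{\ell,j}\,\frac{\hat{q}_{\om,n}^{(k)}(s_j)}{1-e^{is_j}}, \qquad 0\le \ell\le k.
\end{align*}
By assumption \eqref{C8}, for $m$-a.e.\ $\om\in\Om$ the limit of $\hat{q}_{\om,n}^{(k)}(s_j)$ as $n\to\infty$ exists at each of the fixed nodes $s_j$; intersecting over the countable collection of nodes indexed by $k\ge 0$ and $0\le j\le k$ yields a single full-measure set on which every $\bt_{\om,n}^{(k)}(\ell)$ converges, which is the claim of the lemma.

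There is no substantial obstacle in this argument; the two minor points requiring attention are (i) ensuring that the $m$-a.e.\ set of $\om$ for which the limits exist can be chosen uniformly over the countable set of nodes invoked (which is immediate), and (ii) observing that, because the $\bt_{\om,n}^{(k)}(\ell)$ are non-negative real numbers and the Vandermonde system has a unique solution, the limiting values obtained from the (formally complex) inversion formula are automatically real and non-negative, consistent with their meaning as probability ratios.
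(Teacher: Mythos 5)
Your argument is correct and follows the same overall strategy as the paper: view \eqref{eq q with beta} as a linear system in the unknowns $\bt_{\om,n}^{(k)}(0),\dots,\bt_{\om,n}^{(k)}(k)$, invert it, and then pass to the limit term by term using \eqref{C8} at a finite collection of $s$-values, finally intersecting over the countable family of nodes to get a single full-measure set. The genuine difference lies in the choice of nodes and the resulting invertibility argument. The paper evaluates at the integers $s=1,\dots,k+1$ and shows the resulting Vandermonde-type matrix $M_k$ (with entries $e^{ij\ell}$) has non-vanishing determinant by appealing to Baker's version of the Lindemann--Weierstrass theorem. You instead choose $s_j=2\pi(j+1)/(k+2)$, so that the $z_j=e^{is_j}$ are distinct non-trivial $(k+2)$-th roots of unity; the Vandermonde matrix $V=(z_j^\ell)$ is then invertible by the elementary fact that a Vandermonde with distinct nodes is non-singular, and no transcendence theory is needed. (In fact the paper's own $M_k$ is also a Vandermonde matrix with nodes $e^{ij}$, which are pairwise distinct because $\pi$ is irrational, so the Lindemann--Weierstrass input there is heavier than strictly necessary; your choice simply makes the distinctness of the nodes transparent.) Your closing remark that the limiting $\bt_{\om,0}^{(k)}(\ell)$ are automatically real and non-negative --- because the $\bt_{\om,n}^{(k)}(\ell)$ are ratios of measures and the linear system has a unique solution --- is also a correct observation that the paper leaves implicit.
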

\begin{proof}
  Note that \eqref{eq q with beta} applied to $s=1, 2, \dots, k+1$,
yields, for each $n, k, \om$,
\begin{equation}\label{eq:QsAndBetas}
\begin{pmatrix}
\hat{q}_{\om,n}^{(k)}(1) \\
\hat{q}_{\om,n}^{(k)}(2) \\
\vdots \\
\hat{q}_{\om,n}^{(k)}(k+1) 
\end{pmatrix} =
D_k M_k
 \begin{pmatrix}
\bt_{\om,n}^{(k)}(0) \\
\bt_{\om,n}^{(k)}(1) \\
\vdots \\
\bt_{\om,n}^{(k)}(k)  
\end{pmatrix},
\end{equation}
where
$$D_k=\begin{pmatrix}
1-e^i & 0 & \dots & 0\\
0 & 1-e^{2i} & \dots & 0\\
\vdots\\
0 & 0 & \dots & 1-e^{(k+1)i}\\
\end{pmatrix} \quad \text{ and }\quad
M_k=\begin{pmatrix}
1 & e^i & \dots & e^{ki}\\
1 & e^{2i} & \dots & e^{2ki}\\
\vdots\\
1 & e^{(k+1)i} & \dots & e^{(k+1)ki}\\
\end{pmatrix}.$$
Clearly, $D_k$ is invertible. The next paragraph shows that $M_k$ is invertible. Thus, pre-multiplying \eqref{eq:QsAndBetas} by $M_k^{-1}D_k^{-1}$ yields an expression for the $\beta_n$'s as linear combinations of $\hat{q}_n$'s. Taking limits as $n\to \infty$, and recalling that \eqref{C8} ensures the limits exist for $\hat{q}_n$'s, the lemma follows.

To finish the proof, we show that $M_k$ is invertible. This is a consequence of (Baker's version of) Lindemann–Weierstrass theorem \cite[Theorem 1.4]{Baker-Transcendental}, stating that if 
    $a_1, ..., a_N$ are algebraic numbers, and $\alpha_1, \dots, \alpha_N$ are distinct algebraic numbers, then the equation
$$ a_{1}e^{\alpha _{1}}+a_{2}e^{\alpha _{2}}+\cdots +a_{N}e^{\alpha _{N}}=0$$
has only the trivial solution $a_i = 0$ for all $1\leq i \leq N$.
Indeed, $\det M_k = \sum_{j=0}^N a_j e^{ij}$, where 
$N=\sum_{\ell=0}^{k} \ell (\ell+1)$,
 $a_j$ is an integer for each $0\leq j \leq N$, and $a_N=1$. Thus, the Lindemann–Weierstrass theorem (with $\alpha_j = ij$) implies $\det M_k \neq0$ and thus $M_k$ is invertible.
\end{proof}
It follows from Lemma \ref{lem beta limits} that (assuming \eqref{C8} holds) we have
\begin{align*}
    \hat q_{\om,0}^{(k)}(s)
    =
    \lim_{n\to\infty}\hat q_{\om,n}^{(k)}(s)
    &=
    (1-e^{is})\sum_{\ell=0}^ke^{i\ell s}\bt_{\om,0}^{(k)}(\ell),
\end{align*}  
and furthermore, it follows from \eqref{eq: beta bound} that 
\begin{align}\label{eq: beta0 bound}
    \sum_{\ell=0}^k\bt_{\om,0}^{(k)}(\ell) = \lim_{n\to\infty} \sum_{\ell=0}^k\bt_{\om,n}^{(k)}(\ell) \leq C^2\al(k+1).
\end{align}
Denote
\begin{align}\label{def Sigma_om}
    \Sg_\om:=\sum_{k=0}^\infty \sum_{\ell=0}^k\bt_{\om,0}^{(k)}(\ell).
\end{align}
Hence, by the assumption of the summability of the $\al(k)$ coming from \eqref{C4}, we have $\Sg_\om$ is well defined and 
\begin{align}\label{eq: assum B}
    0\leq  \Sg_\om \leq \sum_{k=0}^\infty C^2\al(k+1) = C^2\al <\infty.
\end{align}
Thus we have that  
\begin{align}\label{eq: theta def}
    \ta_\om(s):=1-\sum_{k=0}^\infty \hat{q}_{\om,0}^{(k)}(s)
    =1-(1-e^{is})\sum_{k=0}^\infty\sum_{\ell=0}^k e^{i\ell s}\bt_{\om,0}^{(k)}(\ell),
\end{align}
and furthermore we have shown that $|\ta_\om(s)|$ is bounded uniformly in $\om$ and $s$.

\begin{remark}
    Note that the $\bt_{\om,0}^{(k)}(0)$ are the $\hat q_{\om,0}^{(k)}$ from \cite{AFGTV-TFPF}, and thus the standard extremal index $\ta_{\om,0}$ from \cite{AFGTV-TFPF} is given by 
    \begin{align}\label{def usual EI}
        \ta_{\om,0}:=1-\sum_{k=0}^\infty \bt_{\om,0}^{(k)}(0).
    \end{align}
    
\end{remark}
\begin{remark}
    From the calculation of \eqref{eq: beta bound} and \eqref{eq: assum B}, we note that the assumption \eqref{B} can be weakened from the uniform requirement that $\|\ind_{H_{\om,n}}\|_{\cB_\om}\leq C$. All that is required is that 
    $$\sum_{k=0}^\infty\|\ind_{H_{\sg^{-(k+1)}\om,n}}\|_{\cB_{\sg^{-(k+1)}\om}}\al(k+1)<\infty$$ 
    for each $n\in\NN$ and $m$-a.e. $\oio$.
\end{remark}

Under our assumptions \eqref{C1}--\eqref{C8} we are able to apply Theorem 2.1.2 in \cite{AFGTV-TFPF} to get the following result.

\begin{theorem}\label{thm: dynamics perturb thm}
    Suppose that  $(\mathlist{\bcomma}{\Om, m, \sg, \cJ, T, \cB, \cL_0, \nu_0, \phi_0, H_n})$ is a random perturbed system. 
	If \eqref{C8} holds, then for $m$-a.e. $\om\in\Om$ 
	\begin{align*}
		\lim_{n\to\infty}
		\frac{\lm_{\om,0}-\lm_{\om,n,s}}{\lm_{\om,0}\mu_{\om,0}(H_{\om,n})}
		=(1-e^{is})\ta_{\om}(s).
	\end{align*}
\end{theorem}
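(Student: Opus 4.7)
My proof plan is to reduce the statement to an application of Theorem 2.1.2 of \cite{AFGTV-TFPF}, the abstract quenched Keller–Liverani type result developed in the authors' previous work. The definitions of $\cL_{\om,n,s}$, $\Dl_{\om,n}(s)$, $\eta_{\om,n}(s)$, $q_{\om,n}^{(k)}(s)$ and $\hat q_{\om,n}^{(k)}(s)$ have been set up in the excerpt precisely to match the hypotheses (P1)--(P9) of that theorem, so the bulk of the work is bookkeeping.

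The first step is to observe that the assumptions defining a random perturbed system, namely \eqref{C1}--\eqref{C7}, are exactly the quenched spectral hypotheses needed for (P1)--(P6) and (P9) of \cite{AFGTV-TFPF}; this is essentially by construction, as noted just before Lemma \ref{lem: checking P7 and P8}. The remaining hypotheses (P7) and (P8) are verified in Lemma \ref{lem: checking P7 and P8}: (P7) follows from $\nu_{\om,n,s}(\phi_{\om,0})\to 1$, which comes from combining \eqref{eq: mu0 of pert sum}, the $\|\cdot\|_\infty$-decay of $Q_{\om,n,s}^k$ from \eqref{C4}, and the convergence $\lm_{\om,n,s}\to\lm_{\om,0}$ in \eqref{conv eigenvalues unif2}; (P8) follows from the uniform positivity of $\phi_{\om,0}$ in \eqref{C7} together with the identity $\Dl_{\om,n}(s)=\lm_{\om,0}(1-e^{is})\mu_{\om,0}(H_{\om,n})$ in \eqref{eq: Dl = mu H}.

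Next, I would invoke Theorem 2.1.2 of \cite{AFGTV-TFPF}, which in our notation delivers
\[
\lim_{n\to\infty}\frac{\lm_{\om,0}-\lm_{\om,n,s}}{\Dl_{\om,n}(s)}
=1-\sum_{k=0}^\infty \hat q_{\om,0}^{(k)}(s),
\]
for $m$-a.e.\ $\om$. The existence of each term $\hat q_{\om,0}^{(k)}(s)$ at the fiber level is guaranteed by \eqref{C8} and re-packaged via Lemma \ref{lem beta limits} in terms of the limits $\bt_{\om,0}^{(k)}(\ell)$; absolute convergence of the double series in $k$ and $\ell$ (uniformly in $\om$) follows from the decay of correlations bound \eqref{eq: beta bound}, the summability of $\al(k)$ from \eqref{C4}, and the bound on $\|\mathbbold{1}_{H_{\om,n}}\|_{\cB_\om}$ from \eqref{B}, as recorded in \eqref{eq: assum B}. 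By \eqref{eq: theta def}, the limit is exactly $\ta_\om(s)$.

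Finally, substituting the explicit formula $\Dl_{\om,n}(s)=\lm_{\om,0}(1-e^{is})\mu_{\om,0}(H_{\om,n})$ from \eqref{eq: Dl = mu H} into the ratio and dividing the factor $1-e^{is}$ across to the right-hand side yields the desired identity. The only genuinely nontrivial obstacle is the verification of (P7) and (P8), which has already been handled in Lemma \ref{lem: checking P7 and P8}; beyond that, the argument is a direct quoting of the abstract perturbation theorem together with the algebraic simplification of $\Dl_{\om,n}(s)$.
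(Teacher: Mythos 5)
Your proof follows exactly the route the paper takes: reduce to Theorem~2.1.2 of \cite{AFGTV-TFPF} by noting that \eqref{C1}--\eqref{C8} give (P1)--(P6), (P9) directly, invoking Lemma~\ref{lem: checking P7 and P8} for (P7)--(P8), then substituting $\Dl_{\om,n}(s)=\lm_{\om,0}(1-e^{is})\mu_{\om,0}(H_{\om,n})$ from \eqref{eq: Dl = mu H} and identifying $1-\sum_k \hat q_{\om,0}^{(k)}(s)=\ta_\om(s)$ via \eqref{eq: theta def} and the summability bound \eqref{eq: assum B}. The paper's own proof is just the one-line citation; your write-up supplies the same steps with more detail, so it is correct and takes essentially the same approach.
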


Now following the proof of \cite[Theorem 2.4.5]{AFGTV-TFPF} and making the obvious, minor changes to suit the perturbations of our current setting, assuming \eqref{C1}--\eqref{C8} and \eqref{S}, we obtain the following.
\begin{theorem}\label{thm CF}
    Given the random perturbed system $(\mathlist{\bcomma}{\Om, m, \sg, \cJ, T, \cB, \cL_0, \nu_0, \phi_0, H_n})$ that also satisfies assumptions \eqref{C8} and \eqref{S}, we have that for each $s\in \mathbb R$ and $m$-a.e. $\oio$
    \begin{align}
		\label{evtthmeqn}
		\lim_{n\to\infty}\nu_{\om,0}\lt(e^{is S_{\om,n,n}}\rt)
		=
		\lim_{n\to\infty}\mu_{\om,0}\lt(e^{is S_{\om,n,n}}\rt)
		=
		\lim_{n\to\infty}\frac{\lm_{\om,n,s}^n}{\lm_{\om,0}^n}
		=
		\exp\left(-(1-e^{is})\int_\Om t_\om\ta_\om(s)\, dm(\om)\right).    
    \end{align}
\end{theorem}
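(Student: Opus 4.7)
The plan is to reduce the characteristic function to a ratio of eigenvalue products, and then evaluate this ratio by combining the first-order spectral expansion (Theorem~\ref{thm: dynamics perturb thm}), the scaling \eqref{S}, and the Birkhoff ergodic theorem.

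First I reduce to the eigenvalue product. Applying \eqref{eq: mu0 of pert sum} with $k=n$ gives
\begin{align*}
    \mu_{\om,0}\lt(e^{is S_{\om,n,n}}\rt)
    = \frac{\lm_{\om,n,s}^n}{\lm_{\om,0}^n}
        \lt(\nu_{\om,n,s}(\phi_{\om,0}) + \nu_{\sg^n\om,0}(Q_{\om,n,s}^n \phi_{\om,0})\rt).
\end{align*}
Lemma~\ref{lem: checking P7 and P8} yields $\nu_{\om,n,s}(\phi_{\om,0})\to 1$, while \eqref{C4} and \eqref{C5} give $|\nu_{\sg^n\om,0}(Q_{\om,n,s}^n\phi_{\om,0})|\le C C_2\al(n)\to 0$. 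The $\nu_{\om,0}$-version is handled identically using \eqref{eq: nu0 of pert sum}: the claim $\nu_{\om,n,s}(\ind)\to 1$ follows by the same proof as Lemma~\ref{lem: checking P7 and P8} with $\phi_{\om,0}$ replaced by $\ind$, which requires the preliminary observation that $\nu_{\om,0}(e^{isS_{\om,n,k}})\to 1$ for each fixed $k$; this reduces to $\nu_{\om,0}(T_\om^{-j}(H_{\sg^j\om,n}))\to 0$, which follows from conformality of $\nu_0$ together with \eqref{C1} and the fact that $\nu_{\om,0}(H_{\om,n})\to 0$ (implied by \eqref{C6}). Thus it suffices to compute $\lim_{n\to\infty}\lm_{\om,n,s}^n/\lm_{\om,0}^n$.

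Set $D_{\om,n}(s):=(\lm_{\om,0}-\lm_{\om,n,s})/\lm_{\om,0}$. Combining \eqref{conv eigenvalues unif}, \eqref{etaineq2}, and \eqref{C7} gives the uniform bound $|D_{\om,n}(s)|\le \kappa_s\mu_{\om,0}(H_{\om,n})$ (for simplicity I assume the BV-type uniform bound $K_\om\equiv K$ of Remark~\ref{rem check C6}; otherwise the random factor $K_\om$ is absorbed into the envelope argument below). By \eqref{S}, this is $O(1/n)$ uniformly in $\om$, so $\log(1-D_{\sg^j\om,n})=-D_{\sg^j\om,n}+O(D_{\sg^j\om,n}^2)$ with aggregate quadratic error $O(1/n)\to 0$. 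Theorem~\ref{thm: dynamics perturb thm} and \eqref{S} allow me to decompose
\begin{align*}
    D_{\sg^j\om,n}(s) = (1-e^{is})\ta_{\sg^j\om}(s)\frac{t_{\sg^j\om}+\xi_{\sg^j\om,n}}{n} + r_{\sg^j\om,n}(s),
\end{align*}
with $n|r_{\om,n}(s)|/(t_\om+W)\to 0$ a.s.\ and uniformly bounded. Summing over $j$ and taking the negative, the leading contribution becomes
\begin{align*}
    -(1-e^{is})\cdot\frac{1}{n}\sum_{j=0}^{n-1}\ta_{\sg^j\om}(s)\lt(t_{\sg^j\om}+\xi_{\sg^j\om,n}\rt),
\end{align*}
whose $t$-part converges by Birkhoff to $-(1-e^{is})\int_\Om t_\om\ta_\om(s)\,dm(\om)$, since $t\ta(s)\in L^1(m)$ by \eqref{S} and \eqref{eq: assum B}.

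The main technical obstacle is controlling the $n$-dependent error summands $\xi_{\sg^j\om,n}$ and $r_{\sg^j\om,n}(s)$, because Birkhoff does not apply directly when the summand depends on both $\sg^j\om$ and the running index $n$. My plan is a monotone envelope argument: set
\[
    F_N(\om):=\sup_{n\ge N}|\xi_{\om,n}|,
\]
which by assumption \eqref{S} decreases to $0$ a.s.\ and is bounded by $W$. For $n\ge N$,
\begin{align*}
    \frac{1}{n}\sum_{j=0}^{n-1}|\xi_{\sg^j\om,n}|\le\frac{1}{n}\sum_{j=0}^{n-1}F_N(\sg^j\om)\longrightarrow\int F_N\,dm\quad\text{as }n\to\infty
\end{align*}
by Birkhoff, and then $N\to\infty$ with dominated convergence yields $0$. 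An identical envelope argument applied to the nonnegative, uniformly bounded sequence $n|r_{\om,n}(s)|/(t_\om+W)$ shows $\sum_{j=0}^{n-1}|r_{\sg^j\om,n}(s)|\to 0$. Combining all of the above and exponentiating gives the formula \eqref{evtthmeqn}, completing the proof.
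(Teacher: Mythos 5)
Your proof is correct and follows essentially the same route the paper takes, namely deferring to the argument of Theorem~2.4.5 of \cite{AFGTV-TFPF}: reduce the characteristic function to the ratio $\lm_{\om,n,s}^n/\lm_{\om,0}^n$ via \eqref{eq: mu0 of pert sum} and \eqref{eq: nu0 of pert sum} together with Lemma~\ref{lem: checking P7 and P8} and \eqref{C4}--\eqref{C5}, then take logarithms, insert the first-order expansion from Theorem~\ref{thm: dynamics perturb thm}, and kill the $\xi_{\om,n}$ and $o(1/n)$ remainders with a monotone-envelope-plus-Birkhoff argument powered by \eqref{S}(i)--(ii). One small improvement: your caveat about $K_\om$ is unnecessary and the ``absorbed into the envelope'' fallback would in fact be delicate; instead of routing through $\eta_{\om,n}(s)$ and Remark~\ref{rem check C6}, bound $D_{\om,n}(s)$ directly from \eqref{diff eigenvalues identity unif}: $\lm_{\om,0}-\lm_{\om,n,s}=\lm_{\om,0}\,\nu_{\om,0}\bigl(\phi_{\om,n,s}(1-e^{is\ind_{H_{\om,n}}})\bigr)$, so $|D_{\om,n}(s)|\le 2\|\phi_{\om,n,s}\|_{\infty,\om}\,\nu_{\om,0}(H_{\om,n})\le 2C_2C_3\,\mu_{\om,0}(H_{\om,n})$ using only \eqref{C5} and \eqref{C7}, which gives the required uniform $O(1/n)$ estimate with no extra hypothesis on the embedding constant.
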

Denote the right hand side of \eqref{evtthmeqn} by 
    \begin{align}
    \label{eq:phidefn}
        \vp(s):=
        \begin{cases}
        1 & \text{if } s=0,\\
        \exp\left(-(1-e^{is})\int_\Om t_\om\ta_\om(s)\, dm(\om)\right)
        & \text{if } s\neq 0.
        \end{cases}
    \end{align}
    Note that since $|\ta_\om(s)|$ is bounded above and below uniformly in $\om$ and $s$, and since $t\in L^\infty(m)$, we have that $|\int_\Om t_\om\ta_\om(s)\, dm(\om)|<\infty$, which further implies that $\vp(s)$ is continuous at $s=0$.

From Theorem \ref{thm CF}, we get that $\vp(s)$ (which does not depend on $\om$) is the limit of the characteristic  functions of the ($\om$-dependent) random variables
$$
Z_{\om,n}:=
S_{\om,n,n}(x)=\sum_{j=0}^{n-1}{\ind}_{H_{\sg^j\om,n}}(T_\om^jx).
$$
Since $\vp(s)$ is continuous at $s=0$, it follows from the L\'evy Continuity Theorem (see \cite[Theorem 3.6.1]{lukacs}) that $\vp$ is the characteristic function of an $\om$-independent random variable $Z$ on some probability space $(\Gm', \mathcal{B}', \mathbb{P}')$ to which the sequence of random variables $(Z_{\om,n})_{n=1}^\infty$ (for $m$-a.e. $\om\in\Om$) converge in distribution. 

We will denote the distribution of $Z$ by $\nu_Z$.  
The random variable $Z$ is non-negative and integer valued 
as it is the distributional limit of a sequence of integer-valued random variable. 
Furthermore, for $m$-a.e. $\om\in\Om$ we have that 
$$
\mathbb{P}'(Z=k)=\nu_{Z}(\{k\})=\lim_{n\rightarrow \infty}\mu_{\om,0}(Z_{\om,n}=k).
$$
If no confusion arises, we will denote the underlying probability with $\mathbb{P},$ instead of $\mathbb{P}',$  and its moments with  $\mathbb{E},$ $\text{Var},$ etc., which are actually computed with the distribution $\nu_Z.$

Since $Z$ is clearly infinitely divisibility, i.e. for each $N\in\NN$ we have    
\begin{align*}
        \vp(s)&= \exp\left(-(1-e^{is})\int_\Om t_\om\ta_\om(s)\, dm(\om)\right)
        =\lt(\exp\left(-\frac{1-e^{is}}{N}\int_\Om t_\om\ta_\om(s)\, dm(\om)\right)\rt)^N,
\end{align*}
by \cite[Section 12.2]{fellerv1} (see also \cite[Theorem 3.1]{Zhang2016}) we have that $Z$ is a compound Poisson random variable. 
 
    \begin{remark}
        Using the L\'evy inversion formula we can calculate the probability mass function associated to the random variable $Z$, that is for each $k\in\{0,1,2,\dots\}$ we have 
        \begin{align*}
            \PP(Z=k) = \lim_{T\to\infty}\frac{1}{2T}\int_{-T}^Te^{-isk}\exp\left(-(1-e^{is})\int_\Om t_\om\ta_\om(s)\, dm(\om)\right) ds.
        \end{align*}
    \end{remark}

\begin{remark}\label{rem: moments}
        In view of \eqref{eq:phidefn} we see that the map $s\mapsto \vp(s)$ is $N$-times differentiable at $s=0$ if the map $s\mapsto\ta_\om(s)$ is $N$-times differentiable at $s=0$. It is well known that if $N$ is even, then the RV $Z$ has finite moments up to order $N$, and if $N$ is odd, then $Z$ has finite moments up to order $N-1$ (see, for example, \cite[Theorem 2.3.3]{lukacs}). Since the differentiability of $\ta_\om(s)$ implies the differentiability of $\vp$, in this case we have that 
        $D_s^k\vp(0)=i^k\EE(Z^k)$ for $k\leq N$, where $D_s$ denotes the derivative operator with respect to $s$. In light of (\ref{eq: theta def}) and the fact that $D_s^N e^{i\ell s}=(i\ell)^Ne^{i\ell s}$, it is clear that the map $s\mapsto\ta_\om(s)$ is $N$-times differentiable at $s=0$ if
        \begin{align}\label{eq: sum check}
            \sum_{k=0}^\infty \sum_{\ell=0}^k\ell^N\bt_{\om,0}^{(k)}(\ell) <\infty.
        \end{align} 
        Using \eqref{eq: assum B}, we see that (\ref{eq: sum check}) will hold if 
        $$\sum_{k=0}^\infty \alpha(k+1)\sum_{\ell=0}^k \ell^N<\infty.$$
        Note that \eqref{eq: sum check} certainly holds if the decay rate $\al(k)$ in assumption \eqref{C4} is exponential, i.e. if $\al(k)=\vkp^k$ for some $\vkp\in(0,1)$. In fact, if we have exponential decay in \eqref{C4}, then the map $s\mapsto\ta_\om(s)$ is infinitely differentiable at $s=0$, and thus, each of the moments of the RV $Z$ is exists and is finite. 
    \end{remark}
    The next proposition follows from Remark \ref{rem: moments} and elementary calculations. 
    \begin{proposition}\label{prop modes}
        If the map $s\mapsto\ta_\om(s)$ is twice differentiable (with respect to $s$) at $s=0$, then, if $Z$ is the compound Poisson RV corresponding to the CF $\vp$ coming from Theorem \ref{thm CF}, we have 
    \begin{align*}
        \EE(Z) = \int_\Om t_\om\, dm(\om)
        \qquad\text{ and }\qquad
        \Var(Z) = \int_\Om t_\om\lt(1+2\Sg_\om\rt)\, dm(\om).
    \end{align*}
    \end{proposition}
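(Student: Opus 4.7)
The plan is to extract $\mathbb{E}(Z)$ and $\mathrm{Var}(Z)$ from the characteristic function $\vp$ by differentiating twice at $s=0$ and invoking the identities $\mathbb{E}(Z) = -i\vp'(0)$ and $\mathbb{E}(Z^2) = -\vp''(0)$. Both apply under our hypothesis, since by Remark \ref{rem: moments} twice-differentiability of $s\mapsto \ta_\om(s)$ at $s=0$ transfers to $\vp$ and guarantees $Z$ has a finite second moment. Writing $\vp(s) = \exp F(s)$ with $F(s) := -(1-e^{is})G(s)$ and $G(s) := \int_\Om t_\om\ta_\om(s)\, dm(\om)$, the chain rule combined with $F(0) = 0$ gives $\vp'(0) = F'(0)$ and $\vp''(0) = F''(0) + F'(0)^2$, reducing the problem to computing $F'(0)$ and $F''(0)$.

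The key simplification is that, because the factor $(1-e^{is})$ vanishes at $s=0$, the product rule collapses to
\begin{align*}
F'(0) = iG(0), \qquad F''(0) = -G(0) + 2iG'(0),
\end{align*}
so neither $G''(0)$ nor higher derivatives intervene. To evaluate $G(0)$ and $G'(0)$ I would first read off $\ta_\om(0) = 1$ (immediate from \eqref{eq: theta def}) and $\ta_\om'(0) = i\Sg_\om$. The latter follows by factoring out the vanishing factor, writing
\begin{align*}
\frac{\ta_\om(s)-1}{s} \;=\; -\frac{1-e^{is}}{s}\sum_{k=0}^\infty\sum_{\ell=0}^k e^{i\ell s}\bt_{\om,0}^{(k)}(\ell),
\end{align*}
noting that the prefactor tends to $i$ as $s\to 0$, and using dominated convergence with the uniform majorant $\bt_{\om,0}^{(k)}(\ell)$, which is summable by \eqref{eq: assum B}, to pass the limit inside the double sum and obtain $\Sg_\om$.

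A second application of dominated convergence, justified by $|\ta_\om(s)|$ being bounded uniformly in $\om$ and $s$ (as recorded after \eqref{eq: theta def}) together with $t\in L^\infty(m)$, allows differentiation under the integral $\int_\Om$, yielding $G(0) = \int_\Om t_\om\, dm(\om)$ and $G'(0) = i\int_\Om t_\om\Sg_\om\, dm(\om)$. Substituting into the expressions above gives $\mathbb{E}(Z) = -iF'(0) = G(0)$, and a short algebraic simplification — using $F'(0)^2 = -G(0)^2$ — gives
\begin{align*}
\mathrm{Var}(Z) \;=\; -\vp''(0) - \mathbb{E}(Z)^2 \;=\; -F''(0) - F'(0)^2 - G(0)^2 \;=\; -F''(0) \;=\; G(0) - 2iG'(0) \;=\; \int_\Om t_\om(1+2\Sg_\om)\, dm(\om),
\end{align*}
which is the claimed formula. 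The only delicate point is justifying the two applications of dominated convergence; the remaining steps are routine differentiation and algebra.
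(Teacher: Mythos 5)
Your proof is correct and follows essentially the same route as the paper's: both write $\vp=e^{F}$ with $F(s)=(e^{is}-1)\int_\Om t_\om\ta_\om(s)\,dm(\om)$ (the paper calls this $u(s)$), differentiate twice using the chain rule, exploit the vanishing of the factor $(e^{is}-1)$ at $s=0$ to eliminate $D_s^2\ta_\om$, and use $\ta_\om(0)=1$, $D_s\ta_\om(0)=i\Sg_\om$. Your write-up is slightly more explicit about justifying the interchanges of limit and integral/sum via dominated convergence, which the paper handles more tersely by appealing to the uniform bound $|\ta_\om(s)|\le 1+2C^2\al$ and the finiteness of $|D_s\ta_\om(0)|$ and $|D_s^2\ta_\om(0)|$, but the substance is the same.
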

    \begin{proof}
    Note that if $s\mapsto\ta_\om(s)$ is twice differentiable, then so is the map $s\mapsto\vp(s)$. Thus we must have that $D_s\vp(0)=i\EE(Z)$ and $D_s^2\vp(0)=i^2\EE(Z^2)=-\EE(Z^2)$.
    Let $u(s)=(e^{is}-1)\int t_\om\ta_\om(s)$, then $\vp(s)=e^{u(s)}$ by (\ref{eq:phidefn}).  Then we have 
    $D_s\vp(s)=D_su(s)\cdot \vp(s)$ and $D_s^2\vp(s)=\lt((D_su(s))^2+D_s^2u(s)\rt)\cdot \vp(s)$.
        Calculating $D_su(s)$ gives
        $$
        D_su(s)
        =
        ie^{is}\int_\Om t_\om\ta_\om(s)\, dm(\om)
        +
        (e^{is}-1)\int_\Om t_\om D_s\ta_\om(s)\, dm(\om),
        $$
        where (by (\ref{eq: theta def}))
        $$ D_s\ta_\om(s)=ie^{is}\sum_{k=0}^\infty\sum_{j=0}^k e^{i\ell s}\bt_{\om,0}^{(k)}(\ell)
        +(e^{is}-1)\sum_{k=0}^\infty\sum_{\ell=0}^k i\ell e^{i\ell s}\bt_{\om,0}^{(k)}(\ell).
        $$
        In view of $\Sg_\om$ defined in \eqref{def Sigma_om}, we note that  
        $$
        D_s\ta_\om(0)=i\sum_{k=0}^\infty\sum_{j=0}^k \bt_{\om,0}^{(k)}(\ell)
        =i\Sg_\om.
        $$
        Since $\vp(0)=1$, $\ta_\om(0)=1$, and $|D_s\ta_\om(0)|<\infty$ (by \eqref{eq: assum B}) we have that 
        $$
        D_s\vp(0)=D_su(0)
        =i\int_\Om t_\om\, dm(\om),
        $$
        and so $\EE(Z)=\int_\Om t_\om\, dm(\om)$.
        
        Now calculating $D_s^2u(s)$ we have 
        $$
        D_s^2u(s)=
        -e^{is}\int_\Om t_\om\ta_\om(s)\, dm(\om)
        +2ie^{is}\int_\Om t_\om D_s\ta_\om(s)\, dm(\om)
        +(e^{is}-1)\int_\Om t_\om D_s^2\ta_\om(s)\, dm(\om).
        $$
        Noting that $|D_s^2\ta_\om(0)|$ is finite and $\ta_\om(0)=1$, plugging in $s=0$ gives that 
        \begin{align*}
        D_s^2\vp(0)        &=(D_su(0))^2+D_s^2u(0)
        \\
        &=i^2\lt(\int_\Om t_\om\, dm(\om)\rt)^2+i^2\int_\Om t_\om\, dm(\om)+2i\int_\Om t_\om D_s\ta_\om(0)\, dm(\om)
        \\
        &=-\lt(\int_\Om t_\om\, dm(\om)\rt)^2-\int_\Om t_\om\, dm(\om)-2\int_\Om t_\om \Sg_\om\, dm(\om).
        \end{align*}
        Since $D_s^2\vp(0)=-\EE(Z^2)$, we have
        \begin{align}\label{eq E(Z2)}
            \EE(Z^2)=\lt(\int_\Om t_\om\, dm(\om)\rt)^2+\int_\Om t_\om\, dm(\om)+2\int_\Om t_\om \Sg_\om\, dm(\om).
        \end{align}
        Calculating the variance $\Var(Z)=\EE(Z^2)-(\EE(Z))^2$ finishes the proof. 
    \end{proof}
Since $Z$ a compound Poisson random variable there exists $N$, a Poisson RZ with parameter $\vta$, and iid RVs $X_j$ such that  
\begin{equation*}
    Z:=\sum_{j=1}^N X_j.
\end{equation*}
Furthermore, the CF $\vp(s)=\phi_Z(s)$ of the RV $Z$ is related to the CF $\phi_{X_i}(s)$ of the RV $X_i$ by the following
\begin{align}\label{CF Z}
    \phi_Z(s)=e^{\vta(\phi_{X_1}(s)-1)}.
\end{align}
Arguing as in Section \ref{sec deterministic}, for each $\ell\geq 1$, we set $S_\ell=\sum_{i=1}^\ell X_i$ and $S_0$ the RV equal to $0$. Since $Z$ is compound Poisson we can write
\begin{equation}\label{eq Z dist}
    \mathbb{P}(Z=k)=\sum_{\ell=0}^{\infty}\mathbb{P}(N=\ell)\mathbb{P}(S_\ell=k),
\end{equation}
and so, using Wald's equation and Proposition \ref{prop modes}, we have that $\mathbb{E}(Z)=\vta\mathbb{E}(X_1)=\int_\Om t_\om\, dm(\om)$. Therefore, we must have that 
\begin{align}\label{eq lambda}
    \vta=\frac{\int_\Om t_\om\, dm(\om)}{\EE(X_1)}.
\end{align}
Now we note that it follows from the definition of the RV $Z$ that $\PP(Z=0)$ is given by Gumbel's law (see \cite[Section 2.4]{AFGTV-TFPF}), and thus 
\begin{align}\label{eq Prob Z=0}
    \mathbb{P}(Z=0)=e^{-\int_\Om t_\om\ta_{\om,0}\, dm(\om)},
\end{align}
where $\ta_{\om,0}$ is the usual extremal index defined by \eqref{def usual EI}.
Setting $k=0$ in \eqref{eq Z dist}, the only term which survives is $\ell=0$ (since $\PP(S_0=0)=1$ as $S_0\equiv 0$), which gives
\begin{align}\label{eq Prob Z N}
    \mathbb{P}(Z=0)=\mathbb{P}(N=0)=e^{-\vta}=e^{-\frac{\int_\Om t_\om\, dm(\om)}{\EE(X_1)}},
\end{align}
and therefore using \eqref{eq lambda}, \eqref{eq Prob Z=0}, and \eqref{eq Prob Z N} to solve for $\EE(X_1)$ we get 
\begin{align}\label{eq E(X)}
    \mathbb{E}(X_1)=\frac{\int_\Om t_\om\, dm(\om)}{\int_\Om t_\om\ta_{\om,0}\, dm(\om)}.
\end{align}
Note that this relationship generalizes the relationship found in Section \ref{sec deterministic} and in \cite{HV20}.
It follows from \eqref{eq lambda} and \eqref{eq E(X)} that 
\begin{align}\label{vartheta}
    \vta = \int_\Om t_\om\ta_{\om,0}\, dm(\om).    
\end{align}

To find the variance of the RV $X_i$ we differentiate the CF $\phi_Z(s)$ and $\phi_{X_1}(s)$ as in Proposition \ref{prop modes}. In particular, we have 
$$
D_s\phi_Z(s)=\vta D_s\phi_{X_1}(s)\cdot\phi_Z(s)
$$ 
and 
$$
D_s^2\phi_Z(s)=\lt( \vta D_s^2\phi_{X_1}(s)+(\vta D_s\phi_{X_1}(s))^2\rt)\cdot\phi_Z(s).
$$ 
Plugging in $s=0$ together with \eqref{eq lambda}, fact that $D_s\phi_{X_1}(0)=i\EE(X_1)$, and \eqref{eq E(X)} gives that 
\begin{align*}
    -\EE(Z^2)=D_s^2\phi_Z(0) &=  D_s^2\phi_{X_1}(0)\int_\Om t_\om\ta_{\om,0}\, dm(\om) +\lt(i\int_\Om t_\om\, dm(\om)\rt)^2
    \\
    &=D_s^2\phi_{X_1}(0)\int_\Om t_\om\ta_{\om,0}\, dm(\om) -\lt(\int_\Om t_\om\, dm(\om)\rt)^2.
\end{align*}
So using \eqref{eq E(Z2)}, the formula for $\EE(Z^2)$ from the proof of Proposition \ref{prop modes}, we can solve for $\EE(X_1^2)=-D_s^2\phi_{X_1}(0)$ to get 
\begin{align*}
    \EE(X_1^2) = \frac{\int_\Om t_\om (1+2\Sg_\om)\, dm(\om)}{\int_\Om t_\om\ta_{\om,0}\, dm(\om)}, 
\end{align*}
and so we can calculate the variance of $X_1$ to be 
\begin{align*}
    \Var(X_1) &=
    \frac{\int_\Om t_\om (1+2\Sg_\om)\, dm(\om)}{\int_\Om t_\om\ta_{\om,0}\, dm(\om)}
    -
    \lt(\frac{\int_\Om t_\om\, dm(\om)}{\int_\Om t_\om\ta_{\om,0}\, dm(\om)}\rt)^2.
\end{align*}
Using \eqref{eq:phidefn}, \eqref{CF Z}, \eqref{vartheta}, and the fact that $\vp(s)=\phi_Z(s)$, we can solve for the CF $\phi_{X_1}(s)$ of $X_1$ to get 
\begin{align*}
    \phi_{X_1}(s)=(e^{is}-1)\frac{\int_\Om t_\om \ta_\om(s)\, dm(\om)}{\int_\Om t_\om\ta_{\om,0}\, dm(\om)} +1. 
\end{align*}
Thus, we have just proved the following proposition. 
\begin{proposition}
     The CF of the compound Poisson RV $Z$ can be written as 
    \begin{align*}
        \vp(s)=\phi_Z(s)=e^{\vta(\phi_{X_1}(s)-1)},
    \end{align*}
    where $Z=\sum_{k=1}^N X_k$, $N$ is a Poisson RV with parameter $\vta = \int_\Om t_\om\ta_{\om,0}\, dm(\om)$, and $X_1, X_2, \dots$ are  RV whose CF is given by 
    \begin{align*}
        \phi_{X_1}(s)=(e^{is}-1)\frac{\int_\Om t_\om \ta_\om(s)\, dm(\om)}{\int_\Om t_\om\ta_{\om,0}\, dm(\om)} +1. 
    \end{align*}
    Moreover, if the map $s\mapsto\ta_\om(s)$ is twice differentiable (with respect to $s$) at $s=0$, then,
    the expectation and variance of the RV $X_j$ is given by 
    \begin{align*}
        \mathbb{E}(X_1)=\frac{\int_\Om t_\om\, dm(\om)}{\int_\Om t_\om\ta_{\om,0}\, dm(\om)}
        \quad\text{ and }\quad
        \Var(X_1) &=
    \frac{\int_\Om t_\om (1+2\Sg_\om)\, dm(\om)}{\int_\Om t_\om\ta_{\om,0}\, dm(\om)}
    -
    \lt(\frac{\int_\Om t_\om\, dm(\om)}{\int_\Om t_\om\ta_{\om,0}\, dm(\om)}\rt)^2.
    \end{align*}
    
\end{proposition}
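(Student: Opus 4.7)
The plan is to proceed exactly along the lines already laid out in the paragraphs preceding the proposition, organizing them into a clean argument. First, I would invoke Theorem~\ref{thm CF} together with the paragraph immediately following it, which establishes that $Z$ is compound Poisson with characteristic function $\vp(s)=\exp(-(1-e^{is})\int_\Om t_\om\ta_\om(s)\,dm(\om))$. Given this, there exist a Poisson random variable $N$ with some parameter $\vta$ and an i.i.d.\ sequence $X_1,X_2,\dots$ such that $Z=\sum_{k=1}^N X_k$. I would then quote the standard compound Poisson identity $\phi_Z(s)=e^{\vta(\phi_{X_1}(s)-1)}$ from \cite{univ}, which provides the algebraic link between $\vp(s)$ and $\phi_{X_1}(s)$.

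Next I would pin down the Poisson intensity $\vta$ by a two-step argument. On one hand, $\PP(Z=0)=\PP(N=0)=e^{-\vta}$. On the other hand, the event $\{Z=0\}$ coincides (in the limit) with the no-hits event whose probability is given by Gumbel's law from \cite[Section 2.4]{AFGTV-TFPF}, namely $\PP(Z=0)=\exp(-\int_\Om t_\om\ta_{\om,0}\,dm(\om))$, with $\ta_{\om,0}$ as in \eqref{def usual EI}. Equating these two expressions yields $\vta=\int_\Om t_\om\ta_{\om,0}\,dm(\om)$. Substituting this value back into $\phi_Z(s)=e^{\vta(\phi_{X_1}(s)-1)}$ and inverting for $\phi_{X_1}(s)$ using the explicit form of $\vp(s)=\phi_Z(s)$ immediately gives
\begin{align*}
\phi_{X_1}(s)=(e^{is}-1)\frac{\int_\Om t_\om\ta_\om(s)\,dm(\om)}{\int_\Om t_\om\ta_{\om,0}\,dm(\om)}+1,
\end{align*}
which is the claimed formula.

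For the moments of $X_1$, under the twice-differentiability hypothesis the relation $\phi_Z=e^{\vta(\phi_{X_1}-1)}$ is differentiable twice at $s=0$; differentiating once and using $\phi_{X_1}(0)=1$ yields $D_s\phi_Z(0)=\vta D_s\phi_{X_1}(0)$, i.e.\ $i\EE(Z)=\vta\cdot i\EE(X_1)$, so Wald's identity $\EE(Z)=\vta\EE(X_1)$ combined with $\EE(Z)=\int_\Om t_\om\,dm(\om)$ from Proposition~\ref{prop modes} produces the claimed $\EE(X_1)$. Differentiating a second time gives $D_s^2\phi_Z(0)=\vta D_s^2\phi_{X_1}(0)+(\vta D_s\phi_{X_1}(0))^2$, which after rearranging yields $\EE(X_1^2)=(\EE(Z^2)-(\EE(Z))^2+\vta\EE(X_1))/\vta$ if one substitutes appropriately; using $\EE(Z^2)$ from \eqref{eq E(Z2)} and simplifying gives $\EE(X_1^2)=\int t_\om(1+2\Sg_\om)\,dm/\vta$, and $\Var(X_1)=\EE(X_1^2)-(\EE(X_1))^2$ is the stated formula.

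I do not anticipate any genuine obstacle: the compound Poisson representation is already provided by Theorem~\ref{thm CF}, the extremal-index/Gumbel identity identifies $\vta$, and the remaining steps are routine differentiation of the explicit CFs followed by substitution of the known moments of $Z$ from Proposition~\ref{prop modes}. The only mild care needed is in justifying the differentiation under the integral defining $\vp(s)$, which follows from the uniform bound on $\ta_\om(s)$ (coming from \eqref{eq: assum B}) together with $t\in L^\infty(m)$; these already appear in Remark~\ref{rem: moments}.
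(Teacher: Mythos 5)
Your proposal reproduces the paper's argument: invoke the compound Poisson identity $\phi_Z=e^{\vta(\phi_{X_1}-1)}$, identify $\vta$ via $\PP(Z=0)=e^{-\vta}$ together with Gumbel's law from \cite[Section 2.4]{AFGTV-TFPF}, invert for $\phi_{X_1}$, and then differentiate twice at $s=0$ using the moments of $Z$ from Proposition~\ref{prop modes}. The only difference is cosmetic: you solve for $\vta$ directly and then get $\EE(X_1)$ from Wald, whereas the paper first expresses $\vta=\int t_\om\,dm/\EE(X_1)$ via Wald, then solves for $\EE(X_1)$ using Gumbel, and back-substitutes for $\vta$; the ingredients and conclusions are identical.
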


The distribution of $X_1$ can be obtained by applying the L\'evy inversion formula
\begin{align}\label{eq prob X inv rand}
    \PP(X_1=k)=\lim_{T\to\infty}\frac{1}{2T}\int_{-T}^T e^{-isk}\lt((e^{is}-1)\frac{\int_\Om t_\om \ta_\om(s)\, dm(\om)}{\int_\Om t_\om\ta_{\om,0}\, dm(\om)} +1\rt)\, ds.
\end{align}
Following the procedure outlined in Section \ref{sec deterministic} to solve for $\PP(X_1)$ by differentiating the PGF $G_Z(s)$ of $Z$, and noting that $\vta=\int_\Om t_\om\ta_{\om,0}\, dm(\om)$, we have 
\begin{align}\label{eq pgf prob Z rand}
    \PP(Z=K)
    &=
    \frac {e^{-\int_\Om t_\om\ta_{\om,0}\, dm(\om)}}{K!}\lt(\sum_{k=0}^K\frac{1}{k!}D_s^K(\~g_{X_1}^k)(0)\rt).
\end{align}
Thus one could obtain the distribution for $X_1$ (as an alternative to using \eqref{eq prob X inv rand}) by using \eqref{eq pgf prob Z rand} to solve for $\PP(X_1=k)=\sfrac{D_s^k(\~g_{X_1})(0)}{k!}$ recursively in terms of $\PP(Z=k)$.

\section{Checking our general assumptions for a large class of examples}\label{sec: existence}

In this section we present an explicit class of random piecewise-monotonic interval maps for which our general assumptions \eqref{C1}-\eqref{C7} hold. We follow the approach of Section 2.5 of \cite{AFGTV-TFPF} adapted to the setting of our current perturbations.

We now suppose that the spaces $\cJ_{\om}=[0,1]$ for each $\om\in\Om$ and the maps $T_\om:[0,1]\to[0,1]$ are surjective, finitely-branched, piecewise monotone, nonsingular (with respect to Lebesgue measure $\Leb$), and that there exists $C\geq 1$ such that 
\begin{align}
\tag{F1}\label{F1}
\esssup_\om |T_\om'|\leq C
\qquad\text{ and }\qquad
\esssup_\om d(T_\om)
\leq C,
\end{align}
where $d(T_\om):=\sup_{y\in[0,1]}\# T_\om^{-1}(y)$.
We let $\cZ_{\om,0}$ denote the (finite) monotonicity partition of $T_\om$ and for each $k\geq 2$ we let $\cZ_{\om,0}^{(k)}$ denote the partition of monotonicity of $T_\om^k$.
\begin{enumerate}[align=left,leftmargin=*,labelsep=\parindent]
\item[\mylabel{MC}{MC}]
The map $\sg:\Om\to\Om$ is a homeomorphism, the skew-product map $T:\Om\times [0,1]\to\Om\times [0,1]$ is measurable, and $\omega\mapsto T_\omega$ has countable range. 
\end{enumerate}
\begin{remark}\label{M2 holds}
Under assumption (\ref{MC}), the family of transfer operator cocycles $\{\cL_{\om,n,s}\}$ satisfies the conditions of Theorem 17 \cite{FLQ2} ($m$-continuity and $\sigma$ a homeomorphism). Note that condition \eqref{MC}
implies that $T$ satisfies \eqref{M1} and the cocycle generated by $\cL_0$ satisfies condition \eqref{M2}.
\end{remark}
Recall that the \emph{variation} of $f:[0,1]\to\mathbb{R}_+$ on $Z\subset [0,1]$ be
\begin{align*}
\var_{Z}(f)=\underset{x_0<\dots <x_k, \, x_j\in Z}{\sup}\sum_{j=0}^{k-1}\absval{f(x_{j+1})-f(x_j)}, 
\end{align*}
and  $\var(f):=\var_{[0,1]}(f)$. We let $\BV=\BV([0,1])$ denote the set of complex-valued functions on $[0,1]$ that have bounded variation. 
We define the Banach space $\BV_{1}\sub L^\infty(\Leb)$ to be the set of (equivalence classes of) functions of bounded variation on $[0,1]$, with norm given by
\begin{align*}
\|f\|_{\BV_1}:=\underset{\tilde{f}=f \ \Leb\text{ a.e.}}{\inf} \var(\tilde{f})+\Leb(|f|).    
\end{align*} 
We denote the supremum norm on $L^\infty(\Leb)$ by $\|\spot\|_{\infty,1}$. 
We set $J_\om:=|T_\om'|$ and define the random Perron--Frobenius operator, acting on functions in $BV$
$$
P_\om(f)(x):=\sum_{y\in T_\om^{-1}(x)}\frac{f(y)}{J_\om(y)}.
$$
The operator $P$ satisfies the well-known property that 
\begin{align}\label{PF prop}
\int_{[0,1]}P_\om(f)\,d\Leb=\int_{[0,1]}f\,d\Leb
\end{align}
for $m$-a.e. $\om\in\Om$ and all $f\in\BV$.
Recall from Section~\ref{sec: random setting} that $g_0=\set{g_{\om,0}}_{\om\in\Om}$ and that
$$\cL_{\om,0}(f)(x):=\sum_{y\in T_\om^{-1}(x)}g_{\om,0}(y)f(y),\quad f\in\BV.$$
We assume that the weight function $g_{\om,0}$ lies in $\BV$ for each $\om\in\Om$ and satisfies
\begin{equation}\tag{F2}\label{F2}
\esssup_\omega \| g_{\omega,0}\|_{\infty,1}<\infty,
\end{equation}
and 
\begin{equation}
\tag{F3}\label{F3}
\essinf_\omega \inf g_{\omega,0}>0.
\end{equation}
Note that \eqref{F1} and \eqref{F2} together imply 
\begin{align}\label{fin sup L1}
\esssup_\om\norm{\cL_{\om,0}\ind}_{\infty,1}\leq \esssup_\om d(T_\om)\norm{g_{\om,0}}_{\infty,1}<\infty
\end{align}
and 
\begin{align}\label{fin gJ}
\esssup_\om\norm{g_{\om,0}J_\om}_{\infty,1}<\infty.
\end{align}
We also assume a uniform covering condition\footnote{We could replace the covering condition with the assumption of a strongly contracting potential. See \cite{AFGTV-IVC} for details.}
:
\begin{enumerate}[align=left,leftmargin=*,labelsep=\parindent]
\item[\mylabel{F4}{F4}]
For every subinterval $J\subset [0,1]$ there is a $k=k(J)$ such that for a.e.\ $\omega$ one has $T^k_\omega(J)=[0,1]$.
\end{enumerate}
Concerning the holes system we assume that $H_{\om,n}\sub [0,1]$ are chosen so that assumption
\eqref{A} 
holds. We also assume for each $\om\in\Om$ and each $n\geq 1$ that $H_{\om,n}$ is composed of a finite union of intervals such that 
\begin{enumerate}[align=left,leftmargin=*,labelsep=\parindent]
\item[\mylabel{F5}{F5}]
There is a uniform-in-$n$ and uniform-in-$\omega$ upper bound $\frak{h}\geq 1$ on the number of connected components of $H_{\om,n}$.
\end{enumerate}
We now assume that
\begin{align}
\tag{F6}\label{F6}
\lim_{n\to \infty}\esssup_\om \Leb(H_{\om,n})=0.
\end{align}
Because we are considering small \jaadd[r]{target}s shrinking to zero measure, it is natural to  assume that
\begin{align}
\tag{F7}\label{F7}
T_\om(H_{\om,n}^c)=[0,1]
\end{align}
for $m$-a.e. $\om\in\Om$ and all $n\geq 1$ sufficiently large.
Further we suppose that there exists $N'\geq1$ such that
\footnote{Note that the coefficients appearing in \eqref{F8} are not optimal. See  \cite{AFGTV20} and \cite{AFGTV-TFPF} for how this assumption may be improved. }
\begin{equation}
\tag{F8}\label{F8}
(9+12\frak{h}N')\cdot\esssup_\om\|g_{\omega,0}^{(N')}\|_{\infty,1}
<
\essinf_\om\inf\cL_{\om,0}^{N'}\ind.
\end{equation}
\begin{remark}
    Note that if $\esssup_\om\|g_{\om,0}\|_{\infty,1}<\essinf_\om\inf\cL_{\om,0}\ind$, then the gap between $\esssup_\om\|g_{\om,0}\|_{\infty,1}$ and $\essinf_\om\inf\cL_{\om,0}\ind$ grows exponentially, and thus such an $N'$ exists.
\end{remark}
For each $k\in\NN$ and $\om\in\Om$ we let $\sA_{\om,0}^{(k)}$ be the collection of all finite partitions of $[0,1]$ such that
\begin{align}\label{eq: def A partition}
\var_{A_i}(g_{\om,0}^{(k)})\leq 2\norm{g_{\om,0}^{(k)}}_{\infty,1}
\end{align}
for each $\cA=\set{A_i}\in\sA_{\om,0}^{(k)}$.
Given $\cA\in\sA_{\om,0}^{(k)}$, let $\cZ_{\om,*}^{(k)}$ be the coarsest partition amongst all those finer than $\cA$ and $\cZ_{\om,0}^{(k)}$. 
\begin{remark}
Note that if $\var_Z(g^{(k)}_{\om,0})\leq 2\norm{g_{\om,0}^{(k)}}_{\infty,1}$ for each $Z\in\cZ_{\om,0}^{(k)}$ then we can take the partition $\cA=\cZ_{\om,0}^{(k)}$. Furthermore, the $2$ above can be replaced by some $\hat\al\geq 0$ (depending on $g_{\om,0}$) following the techniques of \cite{AFGTV20} and \cite{AFGTV-TFPF}. 
\end{remark}

We assume the following covering condition 
\begin{enumerate}[align=left,leftmargin=*,labelsep=\parindent]
\item[\mylabel{F9}{F9}]
There exists $k_o(N')\in\NN$ such that for $m$-a.e. $\om\in\Om$, and all $Z\in\cZ_{\om,*}^{(N')}$ we have $T_\om^{k_o(N')}(Z)=[0,1]$, where $N'$ is the number coming from \eqref{F8}.
\end{enumerate}
\begin{remark}
Note that the uniform covering time assumption \eqref{F9} clearly holds if \eqref{F4} holds and if there are only finitely many maps $T_\om$. Remark 2.C.2 of \cite{AFGTV-TFPF} presents an alternative assumption to \eqref{F9} which could be adapted to the setting of this paper.
\end{remark}

Since $T_\om^j\rvert_Z$ is one-to-one for each $Z\in\cZ_{\om,*}^{(k)}$ and each $1\leq j\leq k$, the number of connected components of $Z\cap T_\om^{-j}(H_{\sg^j\om})$ is bounded above by $\frak{h}$ by assumption \eqref{F5}. Since  the function $\exp(is \ind_{H_{\sg^j\om,n}}\circ T_\om^j)$ has at most $2\frak{h}$ jump discontinuities on each interval $Z\in\cZ_{\om,*}^{(k)}$ for each $1\leq j\leq k$, we have that $e^{is S_{\om,n,k}}$ has at most $2\frak{h}k$ many jump discontinuities. Furthermore, $e^{is S_{\om,n,k}}$ is constant between consecutive discontinuities.  Using this together with the fact that the distance in $\mathbb{C}$ between points on the unit circle is bounded above by 2, we have that
\begin{align*}
    \var_Z(e^{is S_{\om,n,k}})
    \leq 
    4\frak{h}k
\end{align*}
for each $Z\in\cZ_{\om,*}^{(k)}$.
Recalling that $g_{\om,n,s}^{(k)}:=\exp(S_k(\vp_{\om,0})+isS_{\om,n,k})=g_{\om,0}\exp(isS_{\om,n,k})$, where $g_{\om,0}=\exp(\vp_{\om,0})$ is as in Section~\ref{sec: random setting}, we see that \eqref{eq: def A partition} implies that 
\begin{align}
    \var_{Z}(g_{\om,n,s}^{(k)})&\leq 
    \norm{g_{\om,0}^{(k)}}_{\infty,1}\var_Z(e^{is S_{\om,n,k}})+\|e^{is S_{\om,n,k}}\|_\infty\var_Z(g_{\om,0}^{(k)})
    \nonumber\\
    &\leq 
    4\frak{h}k\|g_{\om,0}^{(k)}\|_{\infty,1}+2\norm{g_{\om,0}^{(k)}}_{\infty,1}
    \leq
    \|g_{\om,0}^{(k)}\|_{\infty,1}\lt(2+4\frak{h}k\rt)
    \label{eq: def A partition for g_ep}
\end{align}
for each $Z\in \cZ_{\om,*}^{(k)}$.

The following lemma of \cite{AFGTV-TFPF} extends several results in \cite{DFGTV18A} from the specific weight $g_{\omega,0}=1/|T'_\omega|$ to general weights satisfying the conditions just outlined.

\begin{lemma}[Lemma 2.5.9 of \cite{AFGTV-TFPF}]
\label{DFGTV18Alemma}
Assume that a family of random piecewise-monotonic interval maps $\{T_{\omega}\}$ satisfies \eqref{F1}, (\ref{F2}), (\ref{F3}), and (\ref{F4}), as well as (\ref{F8}) and \eqref{MC} for the unperturbed operator cocycle.
Then \eqref{CCM} and 
\eqref{C1} hold, and the parts of \eqref{C2}, \eqref{C3}, \eqref{C4}, \eqref{C5}, and  \eqref{C7} corresponding to the unperturbed operator cocycle hold as well. 
Further, $\nu_{\om,0}$ is fully supported and condition \eqref{C4} holds with $C_f=K$, for some $K<\infty$, and $\alpha(N)=\gamma^N$ for some $\gamma<1$.
\end{lemma}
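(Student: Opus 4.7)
Since this lemma is quoted from \cite{AFGTV-TFPF}, I sketch the plan underlying its proof there, which in turn extends the specific-weight ($g_{\om,0}=1/|T'_\om|$) argument of \cite{DFGTV18A} to the class of weights satisfying \eqref{F2}--\eqref{F3}.

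The plan is to derive everything from a single random Lasota--Yorke inequality. Using piecewise monotonicity of $T_\om$, the branch bounds \eqref{F1}, the weight bound \eqref{F2}, and the quantitative hypothesis \eqref{F8} (engineered precisely so that the $N'$th iterate strictly contracts the variation seminorm modulo an $L^1$-term), one would first establish
\begin{align*}
\var(\cL_{\om,0}^{N'} f) \leq \theta\,\var(f) + D\,\|f\|_{L^1(\Leb)}
\end{align*}
with $\theta<1$ and $D<\infty$ \emph{uniform in} $\om$. This is the workhorse estimate and the only place where the numerical coefficients in \eqref{F8} are used.

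Next, combined with the measurability condition \eqref{MC}, the multiplicative ergodic theorem of \cite{FLQ2} applies and yields a measurable Oseledets decomposition of the cocycle $(\cL_{\om,0})$. The covering assumption \eqref{F4} then guarantees that the top Oseledets space is one-dimensional and contains a strictly positive density $\phi_{\om,0}\in\BV$, producing the random eigenvalue $\lambda_{\om,0}$ and establishing \eqref{CCM}. The dual cocycle produces the conformal probabilities $\nu_{\om,0}$; non-singularity of $T_\om$ and \eqref{F4} give full support and non-atomicity. Items \eqref{C2}, \eqref{C3}, and the $\phi_{\om,0}$ part of \eqref{C5} then follow by appropriate normalizations, while \eqref{C1} is immediate from \eqref{fin sup L1} and \eqref{F3}. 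The uniform lower bound on $\phi_{\om,0}$ required in \eqref{C7} follows by applying $\cL_{\om,0}^k$ to $\phi_{\om,0}$ for $k$ equal to the covering time in \eqref{F4} and dividing by the corresponding product of eigenvalues.

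Finally, the exponential decay in \eqref{C4} would be obtained by applying the Lasota--Yorke inequality to the normalized operator $\lambda_{\om,0}^{-1}\cL_{\om,0}$ restricted to the invariant hyperplane $\ker\nu_{\om,0}$, combined with a Hilbert projective-metric contraction on the BV cone of nonnegative functions, as in \cite{DFGTV18A}. This yields $\|Q_{\om,0}^N f_\om\|_{\infty,\sg^N\om}\leq K\gamma^N \|f_\om\|_{\cB_\om}$ for some $\gamma<1$ uniform in $\om$. The main obstacle is securing this uniformity: random Lasota--Yorke inequalities typically produce only almost-sure, $\om$-dependent constants, but the essential-supremum hypotheses \eqref{F1}--\eqref{F3} and \eqref{F8} are tailored precisely so that every constant in the argument can be made $\om$-uniform, which is what ultimately produces the $\om$-independent geometric rate $\alpha(N)=\gamma^N$.
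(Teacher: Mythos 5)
This lemma is stated in the paper without proof; it is imported verbatim as Lemma 2.5.9 of \cite{AFGTV-TFPF}, so there is no in-paper argument against which to check your sketch line by line. With that caveat, your reconstruction of the plan is sound and matches the ingredients the paper does exhibit elsewhere: the $N'$-step Lasota--Yorke inequality driven by \eqref{F8} is precisely the estimate proved (for the perturbed cocycle) in the displayed Lemma \ref{closed ly ineq App} of Section~\ref{sec: existence}, the reliance on the multiplicative ergodic theorem of \cite{FLQ2} via \eqref{MC} is flagged in Remark~\ref{M2 holds}, the use of \eqref{F4} to get a one-dimensional positive top Oseledets space and full support of $\nu_{\om,0}$ is standard, and the derivation of \eqref{C1} from \eqref{fin sup L1} together with surjectivity and \eqref{F3}, and of \eqref{C7} from the covering time, are as you say.

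The one place you depart methodologically from what the surrounding text suggests is the final step. You propose to obtain the uniform geometric rate $\alpha(N)=\gamma^N$ in \eqref{C4} via a Hilbert projective-metric contraction on the BV cone ``as in \cite{DFGTV18A}''. The spectral approach of \cite{DFGTV18A} — and the framing the paper itself uses when it needs quasi-compactness estimates for the perturbed cocycle in Lemma~\ref{existence lemma} — proceeds through the hyperbolic transfer operator cocycle / Oseledets spectral gap machinery (Theorem~A of \cite{C19}), not through cone contraction. Both routes give the conclusion, and cone methods do appear in related works of the same authors, so this is a difference in mechanism rather than a gap. Your concluding observation that the hypotheses are all stated as essential suprema or infima precisely so that the resulting constants $K$ and $\gamma$ can be made $\om$-independent is the right thing to emphasize, since that uniformity is what separates \eqref{C4} from a merely tempered estimate.
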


From Lemma~\ref{DFGTV18Alemma} we have that $\lm_{\om,0}:=\nu_{\sg\om,0}(\cL_{\om,0}\ind)$ and thus for each $k\geq 1$ we have 
\begin{align}\label{uniflbL0}
\essinf_\om\lm_{\om,0}^k
\geq \essinf_\om\inf\cL_{\om,0}^k\ind
\geq \essinf_\om\inf g_{\om,0}^{(k)}>0.
\end{align}
Furthermore, since $\nu_{\om,0}$ is fully supported on $[0,1]$ and non-atomic for a.e. $\om$, we have that \eqref{F6} implies that for $m$-a.e. $\om\in\Om$  $\lim_{n\to\infty}\nu_{\om,0}(H_{\om,n})=0$, and since $\esssup_\om\lm_{\om,0}$ is bounded (by \eqref{fin sup L1}) together with Remark \ref{rem check C6}, we have that \eqref{C6} holds. 

We now use hyperbolicity of the unperturbed transfer operator cocycle to guarantee that we have hyperbolic cocycles for large $n$, which will yield \eqref{C2}, \eqref{C3}, \eqref{C4}, \eqref{C5}, \eqref{C6}, and \eqref{C7} for large $n$.

The main result of this section is the following.
\begin{lemma}\label{existence lemma}
    Assume that a family of random piecewise-monotonic interval maps $\{T_{\omega}\}$ satisfies \eqref{F1}--\eqref{F9} together with \eqref{MC}. Then \eqref{CCM}, \eqref{A}, and \eqref{B} hold as well as \eqref{C1}. Furthermore, for all $n\geq 1$ sufficiently large, conditions \eqref{C2}--\eqref{C7} hold. In particular $(\mathlist{\bcomma}{\Om, m, \sg, [0,1], T, \BV_1, \cL_0, \nu_0, \phi_0, H_n})$ is a random perturbed system.
\end{lemma}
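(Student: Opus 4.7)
The plan is to leverage Lemma \ref{DFGTV18Alemma}, which under assumptions \eqref{F1}--\eqref{F4}, \eqref{F8}, and \eqref{MC} already delivers \eqref{CCM}, \eqref{C1}, and the unperturbed parts of \eqref{C2}--\eqref{C5} and \eqref{C7}, with $\nu_{\om,0}$ fully supported and non-atomic and with exponential decay $\al(N)=\gm^N$ in \eqref{C4}. The condition \eqref{A} is among the hypotheses. Condition \eqref{B} follows immediately because each $H_{\om,n}$ has at most $\frak{h}$ connected components by \eqref{F5}, so $\|\ind_{H_{\om,n}}\|_{\BV_1}\leq 2\frak{h}+1$ uniformly in $\om$ and $n$. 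For \eqref{C6}, assumption \eqref{F6} together with the fact (also extracted in the proof of Lemma \ref{DFGTV18Alemma}) that $\nu_{\om,0}$ has a uniformly bounded BV density with respect to Lebesgue gives $\nu_{\om,0}(H_{\om,n})\to 0$ as $n\to\infty$, so Remark \ref{rem check C6} applied with $K_\om=2$ (since $\cB_\om=\BV_1$) yields $\eta_{\om,n}(s)\to 0$.

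The main remaining task is to verify the perturbed parts of \eqref{C2}--\eqref{C5} for $n$ sufficiently large. The first step is to establish a uniform-in-$n$ Lasota--Yorke inequality for the cocycle $\{\cL_{\om,n,s}\}_{n\ge 1}$ on $\BV_1$. The estimate \eqref{eq: def A partition for g_ep} gives $\var_Z(g_{\om,n,s}^{(N')})\leq (2+4\frak{h}N')\|g_{\om,0}^{(N')}\|_{\infty,1}$ on every $Z\in\cZ_{\om,*}^{(N')}$; the pre-image tree of $\cL_{\om,n,s}$ coincides with that of $\cL_{\om,0}$ by \eqref{F7}; and the numerical coefficient $9+12\frak{h}N'$ in \eqref{F8} is engineered precisely to absorb the inflated variation constant coming from the multiplicative perturbation $e^{is\ind_{H_{\om,n}}}$. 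Combining these with the covering time \eqref{F9}, the Lasota--Yorke derivation used in the proof of Lemma \ref{DFGTV18Alemma} can be run essentially verbatim to yield, for $m$-a.e.\ $\om$, a uniform-in-$n$ inequality of the form $\|\cL_{\om,n,s}^{N'} f\|_{\BV_1}\leq A\|f\|_{\BV_1}+B\|f\|_{L^1(\nu_{\om,0})}$ with contraction constant $A<\essinf_\om\lm_{\om,0}^{N'}$.

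Given this uniform Lasota--Yorke inequality and the measurability condition \eqref{MC} (cf.\ Remark \ref{M2 holds}), the semi-invertible multiplicative ergodic theorem of \cite{FLQ2} applies to $\{\cL_{\om,n,s}\}$ for each fixed $s$ and every sufficiently large $n$. This produces the top Lyapunov multiplier $\lm_{\om,n,s}$ and a one-dimensional Oseledets space spanned by a measurable equivariant family $\phi_{\om,n,s}\in\BV_1$, together with the dual eigenfamily $\nu_{\om,n,s}$ and a complementary Oseledets subspace providing the splitting \eqref{C3}. Normalizing so that $\nu_{\om,0}(\phi_{\om,n,s})=1$ gives \eqref{C2}; the bound in \eqref{C5} follows from applying the Lasota--Yorke inequality to $\phi_{\om,n,s}$ via the equivariance $\cL_{\om,n,s}^{N'}\phi_{\om,n,s}=\lm_{\om,n,s}^{(N')}\phi_{\sg^{N'}\om,n,s}$; and \eqref{C7}, being a statement about $\phi_{\om,0}$ alone, is immediate. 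The principal obstacle is showing that the spectral gap of the perturbed cocycle persists uniformly in $n$ so that \eqref{C4} holds with the same exponential rate $\al(N)=\gm^N$. This is resolved by combining the perturbation bound $\eta_{\om,n}(s)\to 0$ from \eqref{C6} with the hyperbolic-continuation results for operator cocycles developed in \cite{AFGTV-TFPF}, which guarantee that the simple top Lyapunov exponent and its Oseledets splitting are preserved under arbitrarily small perturbations in the relevant triple norm, thereby delivering \eqref{C4} uniformly for all $n$ large enough.
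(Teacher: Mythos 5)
Your proposal is correct in substance and follows essentially the same path as the paper's proof. The paper reduces the problem to a single black box, Theorem~A of \cite{C19} (a stability theorem for hyperbolic transfer operator cocycles), whose three hypotheses are: (i) hyperbolicity of the normalized unperturbed cocycle $\hat\cL_{\om,0}=\lm_{\om,0}^{-1}\cL_{\om,0}$ with a one-dimensional leading Oseledets space; (ii) a uniform-in-$n$ Lasota--Yorke inequality for $\{\hat\cL_{\om,n,s}\}_{n\ge n_0}$; (iii) $\esssup_\om\trinorm{\hat\cL_{\om,0}-\hat\cL_{\om,n,s}}\to 0$ in the $\BV$--$L^1$ triple norm. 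You correctly identify the uniform Lasota--Yorke inequality as the crux, derive it from the same ingredients (\eqref{eq: def A partition for g_ep}, \eqref{F7}--\eqref{F9}, and the engineered constant in \eqref{F8}), and invoke a hyperbolic-continuation result to propagate the Oseledets structure to the perturbed cocycle; the handling of \eqref{CCM}, \eqref{C1}, \eqref{A}, \eqref{B}, \eqref{C6} is likewise identical to the paper's.

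Two minor imprecisions are worth flagging. First, you assert that the semi-invertible MET of \cite{FLQ2} applied to $\{\cL_{\om,n,s}\}$ already produces a one-dimensional leading Oseledets space; the MET alone gives the Oseledets decomposition but neither the simplicity of the top exponent nor the uniform exponential rate $\al(N)=\gamma^N$ for $Q_{\om,n,s}^N$ demanded by \eqref{C4}. Both of these are precisely what the stability theorem (Theorem~A of \cite{C19}, on which the continuation results in \cite{AFGTV-TFPF} rest) supplies, so your MET invocation is logically superfluous and, as worded, overstates what it yields. Second, the closeness hypothesis you appeal to is $\eta_{\om,n}(s)\to 0$, which is the quantity in \eqref{C6}; the stability theorem actually requires the uniform triple-norm convergence $\esssup_\om\trinorm{\hat\cL_{\om,0}-\hat\cL_{\om,n,s}}\to 0$, a different (though related) quantity, also a consequence of \eqref{F6} as in the deterministic bound \eqref{rt}. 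Neither point is a genuine gap in the sense of a missing idea --- both are cured by substituting the correct statements --- but the paper's direct reduction to \cite[Theorem A]{C19} packages the argument more cleanly and sidesteps both.
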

\begin{proof}
The proof of this lemma follows similarly to the proof 2.5.10 in \cite{AFGTV-TFPF}. 
The main argument is to apply Theorem A \cite{C19}. Denoting $\hat\cL_{\om,n,s}:=\lm_{\om,0}^{-1}\cL_{\om,n,s}$ for each $\om$, $n\ge 1$, and $s\in\RR\bs\{0\}$ we can write the hypotheses of Theorem A \cite{C19}, in our notation, as:
\begin{enumerate}
\item\label{harry assum 1 hat} $\hat\cL_{\omega,0}$ is a hyperbolic transfer operator cocycle on $\BV_1$ with norm $\|\cdot\|_{\BV_1}$ and a one-dimensional leading Oseledets space (see Definition 3.1 \cite{C19}), and slow and fast growth rates $0<\gamma<\Gamma$, respectively. We will construct $\gamma$ and $\Gamma$ shortly.
\item\label{harry assum 2 hat} The family of cocycles $\{\hat\cL_{\omega,n,s}\}_{n\ge n_0}$ satisfy a uniform Lasota--Yorke inequality 
\begin{equation*}
\|\hat\cL^k_{\omega,n,s}f\|_{\BV_1}\le A\alpha^k\|f\|_{\BV_1}+B^k\|f\|_{1}
\end{equation*}
for a.e.\ $\omega$ and $n \ge n_0$, where $\alpha\le \gamma<\Gamma\le B$.
\item\label{harry assum 3 hat} $\lim_{n\to \infty}\esssup_{\omega}\trinorm{\hat\cL_{\omega,0}-\hat\cL_{\omega,n,s}}= 0$, where $\vertiii{\spot}$ is the $\BV-L^1(\Leb)$ triple norm.
\end{enumerate} 
The proofs of items \eqref{harry assum 1 hat} and \eqref{harry assum 3 hat} are similar to the proofs given in the proof of Lemma 2.5.10 in \cite{AFGTV-TFPF}. To prove item \eqref{harry assum 2 hat}, we require the following the following Lasota--Yorke inequality inspired by Lemma 1.5.1 of \cite{AFGTV-TFPF}.
\begin{lemma}\label{closed ly ineq App} 
For any $f\in\BV$ we have 
\begin{align*}
\var(\cL_{\om,n,s}^k(f))
\leq
\lt(9+12\frak{h}k\rt)\norm{g_{\om,0}^{(k)}}_{\infty,1}\var(f)+
\frac{\lt(8+12\frak{h}k\rt)\norm{g_{\om,0}^{(k)}}_{\infty,1}}{\min_{Z\in\cZ_{\om,*}^{(k)}}\nu_{\om,0}(Z)}\nu_{\om,0}(|f|).
\end{align*}
\end{lemma}
\begin{proof}
By considering intervals $Z$ in $\cZ_{\om,*}^{(k)}$, we are able to write 
\begin{align}\label{eq: ly ineq 1}
\cL_{\om,n,s}^kf=\sum_{Z\in\cZ_{\om,*}^{(k)}}(\ind_Z f g_{\om,n,s}^{(k)})\circ T_{\om,Z}^{-k}
\end{align} 
where 
$$	
T_{\om,Z}^{-k}:T_\om^k(I_{\om,n,s})\to Z
$$ 
is the inverse branch that takes $T_\om^k(x)$ to $x$ for each $x\in Z$. Now, since 
$$
\ind_Z\circ T_{\om,Z}^{-k}=\ind_{T_\om^k(Z)},
$$
we can rewrite \eqref{eq: ly ineq 1} as 
\begin{align}\label{eq: closed ly ineq 2}
\cL_{\om,n,s}^kf=\sum_{Z\in\cZ_{\om,*}^{(k)}}\ind_{T_\om^k(Z)} \lt((f g_{\om,n,s}^{(k)})\circ T_{\om,Z}^{-k}\rt).
\end{align}
So,
\begin{align}\label{closed var tr op sum}
\var(\cL_{\om,n,s}^kf)\leq \sum_{Z\in\cZ_{\om,*}^{(k)}}\var\lt(\ind_{T_\om^k(Z)} \lt((f g_{\om,n,s}^{(k)})\circ T_{\om,Z}^{-k}\rt)\rt).
\end{align}
Now for each $Z\in\cZ_{\om,*}^{(k)}$, using \eqref{eq: def A partition for g_ep}, we have 
\begin{align}
&\var\lt(\ind_{T_\om^k(Z)} \lt((f g_{\om,n,s}^{(k)})\circ T_{\om,Z}^{-k}\rt)\rt)
\leq \var_Z(f g_{\om,n,s}^{(k)})+2\sup_Z\absval{f g_{\om,n,s}^{(k)}}
\nonumber\\
&\qquad\qquad\leq 3\var_Z(f g_{\om,n,s}^{(k)})+2\inf_Z\absval{f g_{\om,n,s}^{(k)}}
\nonumber\\
&\qquad\qquad\leq 3\norm{g_{\om,n,s}^{(k)}}_{\infty,1}\var_Z(f)+3\sup_Z|f|\var_Z(g_{\om,n,s}^{(k)})+2\norm{g_{\om,n,s}^{(k)}}_{\infty,1}\inf_Z|f|
\nonumber\\
&\qquad\qquad\leq 
3\norm{g_{\om,0}^{(k)}}_{\infty,1}\var_Z(f)+\lt(6+12\frak{h}k\rt)\norm{g_{\om,0}^{(k)}}_{\infty,1}\sup_Z|f|+2\norm{g_{\om,0}^{(k)}}_{\infty,1}\inf_Z|f|
\nonumber\\
&\qquad\qquad\leq 
\lt(9+12\frak{h}k\rt)\norm{g_{\om,0}^{(k)}}_{\infty,1}\var_Z(f)+\lt(8+12\frak{h}k\rt)\norm{g_{\om,0}^{(k)}}_{\infty,1}\inf_Z|f|
\nonumber\\
&\qquad\qquad\leq
\lt(9+12\frak{h}k\rt)\norm{g_{\om,0}^{(k)}}_{\infty,1}\var_Z(f)+\lt(8+12\frak{h}k\rt)\norm{g_{\om,0}^{(k)}}_{\infty,1}\frac{\nu_{\om,0}(|f\rvert_Z|)}{\nu_{\om,0}(Z)}.
\label{closed var ineq over partition}
\end{align}
Using \eqref{closed var ineq over partition}, we may further estimate \eqref{closed var tr op sum} as
\begin{align}
\var(\cL_{\om,n,s}^kf)
&\leq
\sum_{Z\in\cZ_{\om,*}^{(k)}} \lt(\lt(9+12\frak{h}k\rt)\norm{g_{\om,0}^{(k)}}_{\infty,1}\var_Z(f)+\lt(8+12\frak{h}k\rt))\norm{g_{\om,0}^{(k)}}_{\infty,1}\frac{\nu_{\om,0}(|f\rvert_Z|)}{\nu_{\om,0}(Z)}\rt)
\nonumber\\
&\leq 
\lt(9+12\frak{h}k\rt)\norm{g_{\om,0}^{(k)}}_{\infty,1}\var(f)+
\frac{\lt(8+12\frak{h}k\rt)\norm{g_{\om,0}^{(k)}}_{\infty,1}}{\min_{Z\in\cZ_{\om,*}^{(k)}}\nu_{\om,0}(Z)}\nu_{\om,0}(|f|),
\label{ORLYLY}
\end{align}
and thus the proof of Lemma \ref{closed ly ineq App} is complete. 
\end{proof}

To complete the proof of item \eqref{harry assum 2 hat} of Lemma \ref{existence lemma} we begin by setting 
\begin{align}\label{al def}
\alpha^{N'}:=\frac{(9+12\frak{h}N'))\esssup_\om \|g_{\om,0}^{( N')}\|_{\infty,1}}{\essinf_\om\inf\mathcal{L}^{ N'}_{\om,0}\ind}
<1,
\end{align}
which is possible by  (\ref{F8}). 
For item \eqref{harry assum 2 hat} using the the Lasota--Yorke inequality proven in Lemma \ref{closed ly ineq App}, together with \eqref{al def} and dividing through by $\lm_{\om,0}^{N'}$ gives
\begin{align}
\var(\hat\cL_{\omega,n, s}^{N'} f)
&\le 
\frac{(9+12\frak{h}N'))\norm{g_{\om,0}^{(k)}}_{\infty,1}}{\lm_{\om,0}^{N'}}\var(f)
+\frac{(8+12\frak{h}N'))\norm{g_{\om,0}^{(k)}}_{\infty,1}}{\lm_{\om,0}^{N'}\min_{Z\in \mathcal{Z}_{\omega,*}^{(N')}(\mathcal{A})}\nu_{\omega,0}(Z)}\nu_{\omega,0}(|f|)
\nonumber\\
&\le 
\alpha^{N'}\var(f)+\frac{\al^{N'}}{\min_{Z\in \mathcal{Z}_{\omega,*}^{(N')}(\mathcal{A})}\nu_{\omega,0}(Z)}\nu_{\omega,0}(|f|).
\label{LYineqorly hat}
\end{align} 
We note that \eqref{F3}, \eqref{F9}, \eqref{uniflbL0}, and the equivariance of the backward adjoint cocycle imply that for $Z\in\cZ_{\om,*}^{(N')}$ we have
\begin{align}\label{LY LB calc}
\nu_{\om,0}(Z)
=
\nu_{\sg^{k_o(N')}\om,0}\left(\left(\lm_{\om,0}^{k_o(N')}\right)^{-1}\cL_{\om,0}^{k_o(N')}\ind_Z\right)
\geq
\frac{\inf g_{\om,0}^{k_o(N')}}{\lm_{\om,0}^{k_o(N')}}>0.
\end{align}
As stated above, the remainder of the proof of Lemma \ref{existence lemma} follows similarly to the proof of Lemma 2.5.10 in \cite{AFGTV-TFPF}.
\end{proof}

\section{Examples}\label{S:examples}

In this section we present several examples of random dynamics resulting in varying types of compound Poisson distributions. We provide examples of systems in the class described in Section \ref{sec: existence}.

\begin{example}\label{ex 3}\textbf{Compound Poisson That is Not P\'olya-Aeppli From Random Maps and Random \jaadd[r]{Target}s:}
Let $\{\Omega_j\}$ be a partition of $\Omega$. We consider the following family of maps $\{T_\omega\}$:
\begin{equation}
	\label{eg1}
	T_\omega(x)=\left\{
	\begin{array}{ll}
		L_\om(x), & \hbox{$0\le x\le (1-1/\gm_\omega)/2$;} 
        \\
		\gm_\omega x-(\gm_\omega-1)/2, & \hbox{$(1-1/\gm_\omega)/2\le x\le (1+1/\gm_\omega)/2$;} 
        \\ R_\om(x), & \hbox{$(1+1/\gm_\omega)/2\le x\le 1$,}
	\end{array}
	\right.
\end{equation}
where $L|_{\Om_j}$ and $R|_{\Om_j}$ are maps with finitely many full linear branches,  
$1<\gm\le \gm_\omega\le \Gm<\infty$ and $\gm\rvert_{\Om_j}$ is constant   
for each $j\geq 1$.
We take $g_{\omega,0}=1/|T'_\omega|$ and since these maps have full linear branches, Lebesgue measure is preserved by all maps $T_\omega$;  i.e.\ $\mu_{\omega,0}=\mathrm{Leb}$ for a.e. $\omega$.
The central branch has slope $\gm_\omega$ and passes through the fixed point $x_0=1/2$, which lies at the center of the central branch;  see Figure \ref{fig:map}.
\begin{figure}[hbt]
	\centering
	\includegraphics[width=6cm]{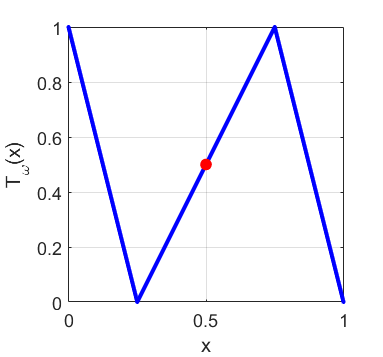}
	\caption{Graph of a map $T_\omega$, with $\gm_\omega=2$.}\label{fig:map}
\end{figure}
A specific random driving could be $\sigma:S^1\to S^1$ given by $\sigma(\omega)=\omega+\alpha$ for some $\alpha\notin\mathbb{Q}$ and $\gm_\omega=\gm^0+\gm^1\cdot\omega$ for $1< \gm^0<\infty$ and $0<\gm^1<\infty$, but only the ergodicity of $\sigma$ will be important for us.
We consider the Banach spaces $\cB_\om$ to be the space $\BV$ of complex-valued functions of bounded variation.

To verify our general assumptions \eqref{M1}, \eqref{M2}, \eqref{CCM}, \eqref{A}, \eqref{B}, \eqref{C1} - \eqref{C7} it suffices to check the assumptions \eqref{F1}--\eqref{F9}
\footnote{We note that since the \jaadd[r]{target}s consist of a single interval, to to check \eqref{F8} with $N'=1$, it suffices to ensure that $\essinf_\om\gm_\om>21$ and that the slopes of the branches of the maps $L_\om$ and $R_\om$ are at least as large as $\gm_\om$.}
This is done in a similar manner to the way in which the assumptions (E1)--(E9) of \cite[Section 2.5]{AFGTV-TFPF} are checked in \cite[Example 2.7.1]{AFGTV-TFPF} for transfer operators acting on the space of real-valued BV functions with different perturbations. 
Therefore we have only to verify assumptions \eqref{C8} and \eqref{S}.

For each $\oio$ and each $\nin$ consider a sequence of shrinking balls $H_{\om,n}$ containing the fixed point $x_0$
such that $x_0\in H_{\om,n+1}\sub H_{\om,n}$ for each $\nin$. Then assumption \eqref{B} is clearly satisfied. Furthermore the \jaadd[r]{target}s $H_{\om,n}$ are chosen so that the assumption \eqref{S} is satisfied. Such \jaadd[r]{target}s $H_{\om,n}$ are constructed in Section 2.7 of \cite{AFGTV-TFPF} as neighborhoods of extremal points of observation functions $h:M\to\RR$. 

Assumption \eqref{S} states that $\mu_{\om,0}(H_{\om,n})\leq \frac{|t|_\infty+W}{n}$.
In this example, all fibre measures $\mu_{\om,0}$ are Lebesgue and so the \jaadd[r]{target}s $H_{\om,n}$ are neighbourhoods of $x_0$ of diameter no greater than $(|t|_\infty+W)/n$.
Because the maps $T_\omega$ are locally expanding about $x_0$, for fixed $k>0$ one can find a large enough $n$ so that it is impossible to leave a small neighbourhood of $x_0$ and return after $k$ iterations.
Therefore we can always find $n$ sufficiently large to guarantee that $\bt_{\om,n}^{(k)}(\ell)=0$ for all $0<\ell<k$.
If this were not the case, there must be a positive $\mu_{\sg^{-(k+1)}\om,0}$-measure set of points that (i) lie in $H_{\sg^{-(k+1)}\om,n}$, (ii)  land outside $\ell<k$ of the \jaadd[r]{target}s $H_{\sg^{-k}\om,n},\ldots,H_{\sg^{-1}\om,n}$ for the next $k$ iterations, and then (iii) land in $H_{\om,n}$ on the $k+1^{\mathrm{st}}$ iteration, which we have argued is impossible.
On the other hand, the equality $\ell=k$ corresponds to the situation where points remain in the sequence of \jaadd[r]{target}s for \emph{all} $k$ iterations, which will occur for a positive measure set of points.
This immediately implies that $\bt_{\om,n}^{(k)}(k)\neq 0$, and thus
\begin{align*}
    \hat{q}_{\om,n}^{(k)}(s)
    &=
    (1-e^{is})e^{ik s}\bt_{\om,n}^{(k)}(k)
    =(1-e^{is})e^{ik s}
    \frac{\mu_{\sg^{-(k+1)}\om,0}\lt(\bigcap_{j=0}^{k+1}T_{\sg^{-(k+1)}\om}^{-j}(H_{\sg^{-(k+1)+j}\om,n})\rt)}{\mu_{\om,n}(H_{\om,n})}.
\end{align*}
Because the scaling $t_\om$ varies along the orbit $\sg^{-(k+1)}\omega,\ldots,\omega$, to simplify the above expression we assume that $1<\esssup_\om t_\om/t_{\sigma^{-1}\om}<\essinf_\om |T'_\om(x_0)|$.
This mild assumption on the variation of the scaling $t$ removes the ``Case 2'' considerations in Example 1 \cite{AFGTV-TFPF}, and taking the limit as $n\to\infty$ gives 
\begin{align*}
    \hat q_{\om,0}^{(k)}(s) 
    &=
    (1-e^{is})\frac{e^{ik s}}{\lt|DT_{\sg^{-(k+1)}\om}^{k+1}(x_0)\rt|}
    =
    (1-e^{is})\frac{e^{ik s}}{\prod_{j=0}^k \gm_{\sg^{-(k+1)+j}}}, 
\end{align*}

and thus we have 
\begin{align*}
    \ta_{\om}(s) 
    = 
    1-(1-e^{is})\sum_{k=0}^\infty \frac{e^{ik s}}{\lt|DT_{\sg^{-(k+1)}\om}^{k+1}(x_0)\rt|}.
\end{align*}
Applying Theorem \ref{thm CF} gives that 
\begin{align*}
    \vp(s)
    &= \exp\lt(-(1-e^{is})\int_\Om t_\om\lt(1-(1-e^{is})\sum_{k=0}^\infty \frac{e^{ik s}}{\lt|DT_{\sg^{-(k+1)}\om}^{k+1}(x_0)\rt|}\rt)\, dm(\om)\rt)
    \\
    &= \exp\lt(-(1-e^{is})\int_\Om t_\om\lt(1-(1-e^{is})\sum_{k=0}^\infty \frac{e^{ik s}}{\prod_{j=0}^k\gm_{\sg^{-(k+1)+j}\om}}\rt)\, dm(\om)\rt).
    \end{align*}
    Note that since $1<\gm\leq\gm_\om\leq \Gm$, the series $\sum_{k=0}^\infty \frac{e^{ik s}}{\prod_{j=0}^k\gm_{\sg^{-(k+1)+j}\om}}$ converges absolutely to say $\Sg_\om(s)$ (which depends on $\om$). Since $\Sg_\om(s)$ is not necessarily a geometric series, then $\vp(s)$ is the characteristic function of a compound Poisson distribution which is not the P\'olya-Aeppli distribution. It is unclear which specific compound Poisson distribution is represented by $\vp(s)$.

    Note that the tail of the series $\Sg_\om(s)$ is approximately given by $e^{-is}\sum_{k=N_\om}^\infty (e^{is -\int_\Om\log\gm_\om\, dm})^{k+1}$ where $N_\om$ is the ($\om$-dependent) time that it takes for the Birkhoff Ergodic Theorem to apply.

\end{example}

\begin{example}\label{ex 2}\textbf{P\'olya-Aeppli From Non-Random Slope and Random \jaadd[r]{Target}s:}
  Taking the family of maps $\{T_\om\}$ from the previous example, we now suppose that the slope of the central branch is constant; i.e.\ $DT_\om(x_0)=\gm_\om\equiv \gm>1$ is constant.
    Then we have 
    \begin{align*}
        \vp(s)
    &= \exp\lt(-(1-e^{is})\int_\Om t_\om\lt(1-(1-e^{is})\sum_{k=0}^\infty \frac{e^{ik s}}{\gm^{k+1}}\rt)\, dm(\om)\rt)
    \\
    &=
    \exp\lt(-\lt(1-\gamma^{-1}\rt)\left(\int_\Om t_\om\, dm(\om)\right)\lt(\frac{1-e^{is}}{1-e^{is}\gm^{-1}}\rt)\rt).
    \end{align*}
Then $\vp(s)$ is the characteristic function of a P\'olya-Aeppli (geometric Poisson) distributed random variable $Z$ with parameters given by $\rho=\gm^{-1}\in (0,1)$ and $\vta = (1-\gm^{-1})\int_\Om t_\om dm$. 
\end{example}

\begin{example}\label{ex 2.1}\textbf{P\'olya-Aeppli From I.I.D.\  Maps and \jaadd[r]{Target}s:}
    Now suppose that we are in the setting of the previous example where now we have two maps $T_1$ and $T_2$ chosen iid with probabilities $p_1$ and $p_2$ respectively. Again we let $x_0$ denote the common fixed point of the maps $T_1$ and $T_2$ and let $\gm_i$ denote the slope of the map $T_i$ at $x_0$ for $i=1,2$. In this case $m$ is Bernoulli. Further suppose there are two values of $t$, namely $t_1$ and $t_2$, which are chosen iid with probabilities $\vrho_1$ and $\vrho_2$ respectively\footnote{The argument here goes through exactly the same if $t$ takes on countably many values $t_i$ with probability $\vrho_i$.}. 
 
    Thus, using the fact that the $\gm$ are chosen iid, we have that
    \begin{align}\label{eq iid prod gm}
        \int_\Om  \prod_{j=0}^k\gm_{\sg^{-(k+1)+j}\om}^{-1}\, dm(\om)
        =
        \lt(\frac{p_1}{\gm_1}+\frac{p_2}{\gm_2}\rt)^{k+1}
        =
        \lt(\frac{p_1\gm_2+p_2\gm_1}{\gm_1\gm_2}\rt)^{k+1}
        =:\zt^{k+1}.
    \end{align}
    Note that $\zt\in(0,1)$.
    Now since $t_\om$ is independent of the $\gm$ terms in the product appearing in \eqref{eq iid prod gm} (since the product does not contain a $\gm_\om$), we can use \eqref{eq iid prod gm} to write that 
    \begin{align*}
        &\int_\Om t_\om\lt(1-(1-e^{is})\sum_{k=0}^\infty e^{ik s}\prod_{j=0}^k\gm_{\sg^{-(k+1)+j}\om}^{-1}\rt)\, dm(\om)
        \\
        &\qquad=\ol{t} \lt(1-(1-e^{is})\sum_{k=0}^\infty e^{ik s}\int_\Om\prod_{j=0}^k\gm_{\sg^{-(k+1)+j}\om}^{-1}\, dm(\om)\rt)
        \\
        &\qquad=\ol{t} \lt(1-(1-e^{is})\sum_{k=0}^\infty e^{ik s}\zt^{k+1}\rt)
        =\ol{t}\lt(\frac{1-\zt}{1-e^{is}\zt}\rt),
    \end{align*}
    where $\ol{t}=\int_\Om t_\om\, dm = t_1\vrho_1+t_2\vrho_2$.
    Inserting this into the formula for $\vp(s)$ gives 
    \begin{align*}
        \vp(s)
        =\exp\lt(-\ol{t}(1-e^{is})\lt(\frac{1-\zt}{1-e^{is}\zt}\rt)\rt)
        =\exp\lt(-\ol{t}(1-\zt)\lt(\frac{1-e^{is}}{1- e^{is}\zt}\rt)\rt),
    \end{align*}
    which is the characteristic function of a P\'olya-Aeppli ($\rho=\zt\in (0,1)$ and $\vta = (1-\zt)\ol{t}$) distributed random variable.
\end{example}

\begin{example}\label{ex 1 standard Poisson}\textbf{Standard Poisson from Random Maps and Random \jaadd[r]{Target}s:}

Unlike the previous examples, we now provide an example in which the characteristic function produced from Theorem \ref{thm CF} is for a standard Poisson random variable rather than compound Poisson. We recall the setting of Example 4 of \cite{AFGTV-TFPF}.
Let $\Omega=\{0,1,2,3\}^{\mathbb{Z}},$ with $\sigma$ the bilateral shift map, and $m$ an invariant ergodic measure.

To each letter $j=0,\dots,l-1$ we associate a rational number $v_j\in(0,1)\cap\QQ$. We consider \jaadd[r]{target}s $H_{\om,n}= B(v_{\omega_0}, e^{-z_n(\om)})$,
where $\omega_0$ denotes the $0$-th  coordinate of $\omega\in \Omega$ and the thresholds $z_n(\om)$ are chosen such that assumption \eqref{S} holds.
For each $\omega\in \Omega$ we associate a map $T_{\omega_0},$ where $T_0,\dots,T_{3}$ are maps  of the circle which we will take as $\beta$-maps of the form
 $T_i(x)=\beta_ix+r\pmod 1$, with $\beta_i\in \mathbb{N}$, $\bt_i\geq 3$, and $0\le r<1$ irrational and independent of $i$. Similar arguments to those given in \cite[Example 4]{AFGTV-TFPF} show that our assumptions \eqref{M1}, \eqref{M2}, \eqref{CCM}, \eqref{A}, \eqref{C1} - \eqref{C7} are satisfied for this system. This is accomplished as in Example \ref{ex 3}, by showing that our assumptions \eqref{F1}--\eqref{F9} follow from arguments similar to those of \cite[Example 4]{AFGTV-TFPF} used to verify the assumptions (E1)--(E9). Furthermore, assumption \eqref{B} is satisfied by taking the Banach spaces $\cB_\om$ again to be $\BV$.

Following the argument of \cite[Example 4]{AFGTV-TFPF}, it follows that a necessary condition to get $\bt_{\om,n}^{(k)}(\ell)\neq 0$ for some $k\geq 0$ and $0\leq \ell\leq k$, and thus $\hat{q}_{\om,0}^{(k)}(s)\neq0$, is that the center $v_{(\sigma^{-(k+1)}\om)_0)}$ is sent to the center $v_{\om_0}.$  Let $z$ be one of these rational centers. The iterate $T^n_{\om}(z)$ has the form $T^n_{\om}(z)=\beta_{\om_{n-1}}\cdots \beta_{\om_0}z+k_n r\, \pmod 1$, where $k_n$ is an integer.
Therefore such an iterate will never be a rational number, which shows that all $\hat{q}_{\om,0}^{(k)}(s)=0$ for each $k\ge 0$, $\om$, and each $s\in\RR\bs\{0\}$. Therefore we have $\ta_\om(s)\equiv1$, and so applying Theorem \ref{thm CF}, we have 
\begin{align*}
    \vp(s)
    =
    \exp\lt(-(1-e^{is})\int_\Om t_\om\, dm(\om)\rt),
\end{align*}
which is the  characteristic function of a Poisson random variable with parameter $\vta=\int_\Om t_\om\, dm(\om)$.
\end{example}

\section*{Acknowledgments}
The authors thank the Mathematical Research Institute MATRIX for hosting a workshop during which much of this work was conceived. SV thanks  the support and hospitality of the Sydney Mathematical Research Institute (SMRI), the University of New South Wales, and the University of Queensland.  
The research of SV was supported by the project {\em Dynamics and Information Research Institute} within the agreement between UniCredit Bank and Scuola Normale Superiore di Pisa and by the Laboratoire International Associ\'e LIA LYSM, of the French CNRS and  INdAM (Italy).  SV was also supported by the project MATHAmSud TOMCAT 22-Math-10, N. 49958WH, du french  CNRS and MEAE. 
JA is supported by the ARC Discovery projects DP220102216, and GF and CG-T are partially supported by the ARC Discovery Project DP220102216.
The authors thanks R. Aimino for having shown them the reference \cite{Zhang2020} and L. Amorim and N. Haydn for useful discussions on alternative approaches to get compound Poisson statistics.

\bibliographystyle{abbrv}
\bibliography{poisson}

\end{document}